\newtheorem*{rep@theorem}{\rep@title}
\newcommand{\newreptheorem}[2]{%
\newenvironment{rep#1}[1]{%
 \def\rep@title{#2 \ref{##1}}%
 \begin{rep@theorem}}%
 {\end{rep@theorem}}}
\newcommand{\pre}[2]{\langle #1 | #2\rangle}
\newcommand{\dl}{\langle\langle}
\newcommand{\dr}{\rangle\rangle}
\newcommand{\card}[1]{|#1|}
\newcommand{\C}[1]{{\mathcal #1}}
\newtheorem{Theorem}{Theorem}[section]
\newtheorem{Lemma}[Theorem]{Lemma}
\newtheorem{Claim}{Claim}[Theorem]
\newtheorem{Corollary}[Theorem]{Corollary}
\newtheorem{Proposition}[Theorem]{Proposition}
\theoremstyle{definition}\newtheorem{mydef}[Theorem]{Definition}
\theoremstyle{remark}\newtheorem{remark}[Theorem]{Remark}
\theoremstyle{definition}\newtheorem{case}{Case}
\makeatletter\@addtoreset{case}{example}\makeatother
\theoremstyle{definition}
\begin{document}

\title{Strong boundedness of simply connected split Chevalley groups defined over rings}

\author{Alexander A. Trost}
\address{University of Aberdeen}
\email{r01aat17@abdn.ac.uk}

\begin{abstract}
This paper is concerned with the diameter of certain word norms on S-arithmetic split Chevalley groups. Such groups are well known to be boundedly generated by root elements. We prove that word metrics given by conjugacy classes on S-arithmetic split Chevalley groups have an upper bound only depending on the number of conjugacy classes. This property, called \textit{strong boundedness}, was introduced by K\k{e}dra, Libmann and Martin in \cite{KLM} and proven for ${\rm SL}_n(R)$, assuming 
$R$ is a principal ideal domain and $n\geq 3.$ We also provide examples of normal generating sets for S-arithmetic split Chevalley groups proving our bounds are sharp in an appropriate sense and give a complete account of the existence of small normally generating sets of ${\rm Sp}_4(R)$ and $G_2(R)$.
For instance, we prove that ${\rm Sp}_4(\mathbb{Z}[\frac{1+\sqrt{-7}}{2}])$ cannot be generated by a single conjugacy class.
\end{abstract}

\maketitle

\section{Introduction}

The main concept we study in this paper is strong boundedness of groups. For split Chevalley groups it arises from a couple of different sources. Most importantly, it is related to bounded generation of groups and to the diameter of the Cayley graph of the group with respect to certain infinite sets of generators of said group. 

Firstly, a group $G$ is called \textit{boundedly generated} by a set $S\subset G$, if there is a natural number $N:=N(S)$ such that $G=(SS^{-1})^N.$ 
Bounded generation of split Chevalley groups has been widely studied. For example, for S-arithmetic, split Chevalley groups, Tavgen proved in \cite{MR1044049} that all split Chevalley groups of rank at least $2$ defined using S-algebraic integers, have bounded generation with respect to root elements.
We define precisely both split Chevalley groups $G(\Phi,R)$ and their root elements in Section~\ref{sec_basic_notions}, but for the purpose of this introduction, the reader can think about classical matrix groups like ${\rm SL}_n$ and ${\rm Sp}_{2n}$.  
Furthermore, Morris \cite{MR2357719} has extended Tavgen's result to localizations of orders in rings of algebraic integers in the case of the elementary subgroup 
of ${\rm SL}_n$  and Morgan, Rapinchuk, Sury \cite{MR3892969} established bounded generation by root elements, even in the case of ${\rm SL}_2,$ if the underlying ring of S-algebraic integers has infinitely many units. 

Secondly, K\k{e}dra, Libman, Martin \cite{KLM} considered word norms for generating sets consisting of finitely many conjugacy classes. Namely,for a subset $S$ normally generating $G$, the word norm $\|g\|_S$ for $g\in G$ is the smallest number of conjugates of elements of $S\cup S^{-1}$ needed to write $g\in G.$ The diameter $\|G\|_S$, if it is finite, depends on the normally generating set $S$. However, the notion of \textit{strong boundedness} states that 
$\|G\|_S$ has at least an upper bound only depending on the cardinality $|S|.$ 

The first example of this behaviour is presented in the next theorem. In it $\|\cdot\|_{EL(n)}$ denotes the word metric of ${\rm SL}_n$ with respect to the generating set of elementary matrices-that is unipotent matrices with at most $1$ non-zero off-diagonal entry. 

\begin{Theorem}\cite[Theorem~6.1]{KLM}
\label{KLM_thm}
Let $R$ be a principal ideal domain and let ${\rm SL}_n(R)$ be boundedly generated by elementary matrices for $n\geq 3$ with the diameter 
$\|{\rm SL}_n(R)\|_{EL(n)}$ satisfying $\|{\rm SL}_n(R)\|_{EL(n)}\leq C_n$ for some $C_n\in\mathbb{N}.$ Then ${\rm SL}_n(R)$ is normally generated by the single element $E_{1,n}(1)$ and
\begin{enumerate}
\item{for all finite, normally generating subsets $S$ of $G$, it holds $\|{\rm SL}_n(R)\|_S\leq C_n(4n+4)|S|.$
}
\item{if $R$ has infinitely many maximal ideals, then for each $k\in\mathbb{N}$ there is a finite, normally generating subset $S_k$ of $G$ with $|S_k|=k$ and
$\|{\rm SL}_n(R)\|_{S_k}\geq k$.}
\end{enumerate}
\end{Theorem}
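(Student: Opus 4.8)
The plan is the following. Write $G={\rm SL}_n(R)$ and let $E_n(R)\leq G$ denote the elementary subgroup; the hypothesis that $G$ is boundedly generated by elementary matrices says that $G=E_n(R)$ and that every $g\in G$ is a product of at most $C_n$ matrices $E_{i,j}(r)$. First I would record that $E_{1,n}(1)$ normally generates $G$: for $m\notin\{1,n\}$ the commutator relations give $E_{m,1}(r)E_{1,n}(1)E_{m,1}(-r)=E_{m,n}(r)E_{1,n}(1)$, so $E_{m,n}(r)\in\dl E_{1,n}(1)\dr$ for all $r\in R$, and conjugating by the determinant-one permutation matrices available when $n\geq 3$ yields every $E_{i,j}(r)$; hence $\dl E_{1,n}(1)\dr=E_n(R)=G$.

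For part (1), let $S=\{s_1,\dots,s_k\}$ normally generate $G$. To each $s_i$ I attach its level ideal $\mathfrak a_i\trianglelefteq R$, generated by the off-diagonal entries of $s_i$ together with the pairwise differences of its diagonal entries, so that $s_i$ becomes scalar modulo $\mathfrak a_i$. If $\mathfrak a:=\sum_i\mathfrak a_i$ were proper, then each $s_i$, hence all of $\dl S\dr$, would have image in the center of ${\rm SL}_n(R/\mathfrak a)$, contradicting $\dl S\dr=G$; therefore $\sum_i\mathfrak a_i=R$, and I fix $a_i\in\mathfrak a_i$ with $\sum_i a_i=1$. Now take $g\in G$, write it as a product of at most $C_n$ elementary matrices, and split each factor as $E_{p,q}(r)=\prod_{i=1}^{k}E_{p,q}(ra_i)$; here $E_{p,q}(ra_i)$ lies in the relative elementary subgroup $E_n(R,\mathfrak a_i)$, which by the standard description of the normal subgroups of ${\rm SL}_n$ is contained in $\dl s_i\dr$. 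The crucial quantitative input is that for every $s\neq 1$ with level ideal $\mathfrak b$ and every $b\in\mathfrak b$, the elementary matrix $E_{p,q}(b)$ is a product of at most $4n+4$ conjugates of $s^{\pm1}$. Granting this, $E_{p,q}(r)$ is a product of at most $(4n+4)|S|$ conjugates of $S\cup S^{-1}$ and $g$ of at most $C_n(4n+4)|S|$ of them, which is exactly the claimed bound.

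Proving the quantitative input is the main obstacle. I would attack it by Vaserstein-style commutator calculus: each commutator $[s,E_{p,q}(r)]$ is a product of two conjugates of $s^{\pm1}$, and by chaining a controlled family of such commutators — organized along the columns of $s$ and along the root directions one must traverse to move a prescribed element of $\mathfrak b$ into the $(p,q)$ position — one extracts $E_{p,q}(b)$. The delicate point is keeping the count linear in $n$: expanding $b$ naively along the roughly $n^2$ evident generators of $\mathfrak b$ would only produce an $O(n^2)$ bound, so the combinatorics of the root system has to be organized carefully.

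For part (2), use that $R$ has infinitely many maximal ideals to choose distinct maximal ideals $\mathfrak m_1,\dots,\mathfrak m_k$ and set $\mathfrak b_i:=\prod_{j\neq i}\mathfrak m_j$. No maximal ideal contains every $\mathfrak b_i$, so $\sum_i\mathfrak b_i=R$; choose $c_i\in\mathfrak b_i$ with $\sum_i c_i=1$, perturbing inside the module $\{(d_i):\sum_i d_i=0,\ d_i\in\mathfrak b_i\}$ (nonzero since $R$ is a domain which is not a field) so that the $c_i$ are pairwise distinct. Put $S_k:=\{E_{1,n}(c_1),\dots,E_{1,n}(c_k)\}$; then $|S_k|=k$, and since $\dl S_k\dr\supseteq E_n\bigl(R,\sum_i(c_i)\bigr)=E_n(R,R)=G$ the set $S_k$ normally generates $G$. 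For each $j$ let $\pi_j\colon G\to{\rm SL}_n(R/\mathfrak m_j)$ be the reduction homomorphism; since $c_i\in\mathfrak m_j$ for $i\neq j$ while $c_j\equiv 1\pmod{\mathfrak m_j}$, we get $\pi_j(E_{1,n}(c_i))=1$ for $i\neq j$ and $\pi_j(E_{1,n}(c_j))=E_{1,n}(1)\neq 1$. Now pick $x\in R\setminus\bigcup_j\mathfrak m_j$ by prime avoidance and set $g:=E_{1,2}(x)$. In any expression $g=\prod_{\ell=1}^{m}w_\ell$ with each $w_\ell$ a conjugate of some $E_{1,n}(c_{i_\ell})^{\pm1}$, applying $\pi_j$ annihilates every factor with $i_\ell\neq j$ while $\pi_j(g)=E_{1,2}(\bar x)\neq 1$; hence for each $j\in\{1,\dots,k\}$ there is an $\ell$ with $i_\ell=j$, and these indices $\ell$ are distinct, so $m\geq k$. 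Therefore $\|G\|_{S_k}\geq\|g\|_{S_k}\geq k$, completing the plan.
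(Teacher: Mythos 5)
This theorem is cited from \cite{KLM}; the paper does not reprove it, so there is no internal proof to compare against. Judging your attempt on its own merits:

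Your treatment of part (2) is correct and is essentially the argument the paper itself uses for its generalization, Proposition~\ref{Lower_bounds_higher_rank}: you construct $c_i$ generating $\prod_{j\neq i}\mathfrak{m}_j$ and use reduction homomorphisms $\pi_j$ to force $\|E_{1,2}(x)\|_{S_k}\geq k$, which is the same idea as the paper's bookkeeping with the sets $\Pi(\cdot)$. (One small simplification: in a PID with $t_j$ a generator of $\mathfrak{m}_j$, taking $c_i:=\prod_{j\neq i}t_j$ makes the $c_i$ automatically pairwise distinct by cancellation in the domain, so the perturbation detour is unnecessary.) Your opening paragraph on normal generation by $E_{1,n}(1)$ is also fine.

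For part (1), however, there is a genuine gap, and you have in effect flagged it yourself. The reduction via level ideals, $\sum_i\mathfrak{a}_i=R$, and the partition of unity $1=\sum_i a_i$ is the right skeleton; but the whole content of \cite[Theorem~6.1(1)]{KLM} is concentrated in exactly the ``quantitative input'' you state and do not prove: that for a non-central $s$ with level ideal $\mathfrak{b}$ and any $b\in\mathfrak{b}$, the element $E_{p,q}(b)$ is a product of at most $4n+4$ conjugates of $s^{\pm1}$. Your sketch of a ``Vaserstein-style commutator calculus'' does not engage with the hypothesis that $R$ is a principal ideal domain, yet this hypothesis is indispensable here: it is what makes $\mathfrak{b}$ principal, so that one only needs to extract a \emph{single} generator $c$ of $\mathfrak{b}$ from $s$ via conjugations and commutators (and then absorb arbitrary multiples $rc$ with a bounded number of further commutators), rather than each of the $O(n^2)$ entries of $s$ separately. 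Without the principality, the commutator chain you describe really does give only an $O(n^2)$ bound, exactly the problem you note. In other words: the missing lemma is not a routine calculation you have deferred; it is the theorem. As written, your part~(1) proves the implication ``if the $4n+4$ lemma holds, then the diameter bound holds,'' but not the $4n+4$ lemma itself.
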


The proof of this theorem uses extensive matrix calculations and relies heavily on the underlying ring being a principal ideal domain as well as bounded generation by elementary matrices. Bounded generation can be obtained from Tavgen \cite{MR1044049} and so one of the possible applications would be rings of algebraic integers with class number $1.$ However, it is well known that not all rings of algebraic integers are principal ideal domains and the paper \cite{MR1044049} speaks about more general matrix groups aside from ${\rm SL}_n$ and about arbitrary rings of algebraic integers. 

In this paper we prove the following generalization of part (1):

\begin{repTheorem}{exceptional Chevalley}
Let $\Phi$ be an irreducible root system of rank at least $2$ and let $R$ be a commutative ring with $1$. Additionally, let $G(\Phi,R)$ be boundedly generated by root elements and if $\Phi=B_2$ or $G_2$, then we further assume $(R:2R)<\infty.$
Then there is a constant $C(\Phi,R)\in\mathbb{N}$ such that for all finite, normally generating subset $S$ of $G$, it holds
\begin{equation*}
\|G(\Phi,R)\|_S\leq C(\Phi,R)|S|.
\end{equation*}
\end{repTheorem}

\begin{remark}
Root elements are natural generalizations of the elementary matrices in ${\rm SL}_n$. Such root elements are usually denoted by $\varepsilon_{\chi}(x)$ with varying $\chi\in\Phi$ and $x\in R$. Most notably 
\begin{equation*}
\varepsilon_{\chi}(x_1+x_2)=\varepsilon_{\chi}(x_1)\varepsilon_{\chi}(x_2)
\end{equation*}
holds for all $x_1,x_2\in R.$
\end{remark}

The proof of both Theorem~\ref{exceptional Chevalley} and part (1) of Theorem~\ref{KLM_thm} has the same two step strategy. First, one obtains 
arbitrary root elements as bounded products of conjugates of the finite normally generating set in question and then secondly, one uses bounded generation of the group by root elements to finish. The second step is virtually the same in both cases. However, in the first step instead of using explicit matrix calculations, we use results about the structure of normal subgroups of matrix groups and G\"odel's Compactness Theorem, which enables us to treat more general rings. 
Beyond that there are some features in the rank $2$-cases (more precisely ${\rm Sp}_4$ and $G_2$), which do not occur in the higher rank cases.

Theorem~\ref{exceptional Chevalley} is fairly abstract and can in principle be applied to a lot of different rings. In consequence, we get a couple of corollaries.
First, for rings of S-algebraic integers we obtain Theorem~\ref{alg_numbers_strong_bound}. Second, there is a result for rings of stable range $1$ (Theorem~\ref{stable_range1_strong_boundedness}) and more specifically for semilocal rings (Theorem~\ref{semilocal_uniform}). 

For rings of S-algebraic integers, we also construct finite, normally generating subsets of $G(\Phi,R)$ in Section~\ref{section_lower_bounds}, that give generalizations of Theorem~\ref{KLM_thm}(2). For $\Phi=B_2$ or $G_2$ the situation is more complex: Namely there is a problem with small normally generating sets and we will give a complete account of this. One possible example of this issue is the following:

\begin{repCorollary}{small_generating_sets_Sp4}
Let $R$ be the ring of algebraic integers in the number field $\mathbb{Q}[\sqrt{-7}].$ Then ${\rm Sp}_4(R)$ and $G_2(R)$ are not generated by a single conjugacy class.
\end{repCorollary}

The paper is structured as follows: In Section~\ref{sec_basic_notions}, we define all needed notions like split Chevalley groups, their congruence subgroups and root elements, level ideals and the word norms, which we are studying. In Section~\ref{proof_main}, we state the main technical result and explain how to obtain the main theorem from it. In Section~\ref{proof_fundamental_prop}, we prove this technical result. Both of these sections are split up according to the particular root system $\Phi$ in question, as the arguments are quite different for different $\Phi.$ Section~\ref{Corollaries} speaks about various classes of rings that fulfill the assumptions of Theorem~\ref{exceptional Chevalley} and gives different versions of it. Lastly, in Section~\ref{section_lower_bounds}, we construct explicitly various finite, normal generating sets for the $G(\Phi,R)$ in case of $R$ a ring of S-algebraic integers. In the same section, we also give a complete description of when $G_2(R)$ and ${\rm Sp}_4(R)$ fail to have small normal generating sets for such rings and we make precise, what we mean by \textit{small}.

\section*{Acknowledgments}

I want to thank Bastien Karlhofer for pointing out Proposition~\ref{Vaserstein_decomposition} to me and for always being willing to listen and talk about mathematics. Further, I want to thank Ehud Meir for helpful comments regarding how to write a paper, Ben Martin for being available if I had questions and him and Jarek K\k{e}dra for tirelessly reading several iterations of this paper. This work was funded by Leverhulme Trust Research Project Grant RPG-2017-159.

\section{Basic definitions and notions}
\label{sec_basic_notions}

First, we introduce the basic notions of boundedness and word metrics we study in this paper: 

\begin{mydef}
Let $G$ be a group.
\begin{enumerate}
\item{The notation $A\sim B$ for $A,B\in G$ denotes that $A,B$ are conjugate in $G$. Secondly we define $A^B:=BAB^{-1}$ for $A,B\in G$.}
\item{For $S\subset G$, we define $\dl S\dr$ as the smallest normal subgroup of $G$ containing $S.$}
\item{A subset $S\subset G$ is called a \textit{normally generating set} of $G$, if $\dl S\dr=G$.}
\item{
The group $G$ is called \textit{finitely normally generated}, if a finite normally generating set $S$ exists.}
\item{For $k\in\mathbb{N}$ and $S\subset G$ denote by 
\begin{equation*}
B_S(k):=\bigcup_{1\leq i\leq k}\{x_1\cdots x_i|\ x_j\sim A\text{ or }x_j\sim A^{-1}\text{ for all }j\leq i\text{ and }A\in S\}\cup\{1\}.
\end{equation*}
Further set $B_S(0):=\{1\}.$ If S only contains the single element $A$, then we write $B_A(k)$ instead of $B_{\{A\}}(k)$.}
\item{Define for a set $S\subset G$ the conjugation invariant word norm $\|\cdot\|_S:G\to\mathbb{N}_0\cup\{+\infty\}$ by 
$\|A\|_S:=\min\{k\in\mathbb{N}_0|A\in B_S(k)\}$ for $A\in\dl S\dr$ and by $\|A\|_S:=+\infty$ for $A\notin\dl S\dr.$ The diameter 
$\|G\|_S={\rm diam}(\|\cdot\|_S)$ of $G$ is defined as the minimal $N\in\mathbb{N}$ such that $\|A\|_S\leq N$ for all $A\in G$ or as $\infty$ if there is no such $N$.}
\item{Define for $k\in\mathbb{N}$ the invariant 
\begin{equation*}
\Delta_k(G):=\sup\{{\rm diam}(\|\cdot\|_S)|\ S\subset G\text{ with }\card{S}\leq k,\dl S\dr=G\}\in\mathbb{N}_0\cup\{\rm\infty\}
\end{equation*}
with $\Delta_k(G)$ defined as $-\infty$, if there is no normally generating set $S\subset G$ with $\card{S}\leq k.$ 
}
\item{The group $G$ is called \textit{strongly bounded}, if $\Delta_k(G)$ is finite for all $k\in\mathbb{N}$. 
It is called \textit{uniformly bounded}, if there is a single global bound $L(G)\in\mathbb{N}$ with 
$\Delta_k(G)\leq L(G)$ for all $k\in\mathbb{N}.$}
\end{enumerate}
\end{mydef}

\begin{remark}
\hfill
\begin{enumerate}
\item{Note $\Delta_k(G)\leq\Delta_{k+1}(G)$ for all $k\in\mathbb{N}$.}
\item{
A group $G$ is called \textit{bounded} if $\nu(G)<+\infty$ holds for every conjugation-invariant norm $\nu:G\to\mathbb{R}_{\geq 0}$. For finitely normally generated groups this is equivalent to the existence of a finite normally generating set $S$ such that 
\begin{equation*}
{\rm diam}(\|\cdot\|_S)<\infty.
\end{equation*}
Boundedness properties are not well behaved under passage to finite index subgroups. For example the infinite dihedral group $D_{\infty}$ is bounded, but its finite index subgroup $\mathbb{Z}$ is not.
}
\end{enumerate} 
\end{remark}

\subsection{Simply connected split Chevalley groups}\label{naturality_Chevalley}

To define split Chevalley groups we will first define the Chevalley-Demazure group scheme. We do not prove various statements in the course of this definition. For a more complete description with implicit claims shown please consider \cite{MR1611814} and \cite[Theorem~1, Chapter~1, p.7; Theorem~6(e), Chapter~5, p.38; Lemma~27, Chapter~3, p.~29]{MR3616493}.

Let $G$ be a simply-connected, semi-simple complex Lie group and $T$ a maximal torus in $G$ with associated irreducible root system $\Phi.$ Further, denote by 
$\Pi$ a system of positive, simple roots of $\Phi,$
by $\mathfrak{g}$ the corresponding complex semi-simple Lie-algebra of $G$. The Cartan-subalgebra corresponding to $T$ will be denoted by $\mathfrak{h}$ and the corresponding root spaces in $\mathfrak{g}$ by $E_{\phi}$ for $\phi\in\Phi.$ These choices of Cartan-subalgebra and (simple, positive) roots will be fixed throughout the paper.
The Lie-algebra $\mathfrak{g}$ has a so-called \textit{Chevalley basis} 
\begin{equation*}
\{X_{\phi}\in E_{\phi}\}_{\{\phi\in\Phi\}}\cup\{H_{\phi}\}_{\{\phi\in\Pi\}}
\end{equation*}
such that the structure constants of the Lie Algebra $\mathfrak{g}$ with respect to this basis are all integral. Chevalley-basis are unique up to signs and automorphisms of $\mathfrak{g}$.
\

For each faithful, continuous representation $\rho:G\to GL(V)$ for a complex vector space $V$, there is a lattice $V_{\mathbb{Z}}$ in $V$ with the property:
\begin{equation*}
\frac{d\rho(X_{\phi})^k}{k!}\left(V_{\mathbb{Z}}\right)\subset V_{\mathbb{Z}}\text{ for all }\phi\in\Phi\text{ and }k\geq 0.
\end{equation*}

Fixing a minimal generating set $\{v_1,\dots,v_n\}$ of $V_{\mathbb{Z}}$, then defines functions $t_{ij}:G\to\mathbb{C}$ for all $1\leq i,j\leq n$ by:
\begin{equation*}
\rho(g)(v_i)=\sum_{j=1}^n t_{ij}(g)v_j,
\end{equation*}
because the set $\{v_1,\dots,v_n\}$ also defines a $\mathbb{C}$-basis of $V.$
The functions $t_{ij}$ generate a Hopf algebra called $\mathbb{Z}[G]$ and this defines the Chevalley-Demazure group scheme by
\begin{equation*}
G(\Phi,\cdot): R\mapsto G(\Phi,R):={\rm Hom}_{\mathbb{Z}}(\mathbb{Z}[G],R)
\end{equation*}
with the group structure on $G(\Phi,R)$ given by the Hopf algebra structure on $\mathbb{Z}[G]$ and the induced group homomorphisms
$G(\Phi,R)\to G(\Phi,S)$ obtained by postcomposing with the ring homomorphism $R\to S$. This group scheme $G(\Phi,\cdot)$ does not depend up to isomorphism on the choices of Chevalley basis, faithful representation $\rho$ and lattice $V_{\mathbb{Z}}.$

Further note, that the ring $\mathbb{Z}[y_{ij}]$ is a finitely generated $\mathbb{Z}$-algebra and $\mathbb{Z}$ is noetherian. Hence the polynomial ring in several unknown $\mathbb{Z}[y_{ij}]$ is noetherian and hence there is a finite collection of polynomial functions $P$ in $\mathbb{Z}[y_{ij}]$ such that 
$\mathbb{Z}[y_{ij}]/(P(y_{ij}))\cong\mathbb{Z}[G]$ with the isomorphism given by $y_{ij}\mapsto t_{ij}.$

Using this, one can equivalently define $G(\Phi,R)$ as a subgroup of $GL_n(R)$ by setting:
\begin{equation*}
G(\Phi,R):=\{A\in R^{n\times n}|P(A)=0\}.
\end{equation*}
In this notation, the induced maps $G(\Phi,R)\to G(\Phi,S)$ are obtained by entry-wise application of the ring homomorphism $R\to S.$
We will use mostly this interpretation of $G(\Phi,R)$ in the course of this paper. 

\begin{remark}
In terms of algebraic groups, the group $G(\Phi,R)$ is the group of $R$-points of the $\mathbb{Z}$-defined group scheme $G(\Phi,\cdot).$ 
\end{remark}

\subsection{Root elements}

Next, we will define the previously mentioned root elements of Chevalley groups. For this end, fix a root $\alpha\in\Phi$ and observe that for 
$Z\in\mathbb{C}$ arbitrary the following function is an element of $\rho(G)\subset GL(V):$
\begin{equation*}
\varepsilon_{\alpha}(Z):=\sum_{k=0}^{\infty}\frac{(Zd\rho(X_{\alpha}))^k}{k!}
\end{equation*}
Further $\rho(\varepsilon_{\alpha}(Z))$ in $GL(V)$ has coordinates with respect to the basis $\{v_1,\dots,v_n\}$ that are polynomial functions in $Z$ with coefficients in $\mathbb{Z}.$ This yields a ring homomorphism 
\begin{equation*}
\varepsilon_{\alpha}:\mathbb{Z}[G]\to\mathbb{Z}[Z].
\end{equation*}
By precomposing, this defines another map as follows:
\begin{equation*}
\varepsilon_{\alpha}:\varepsilon_{\alpha}(R):={\rm Hom}_{\mathbb{Z}}(\mathbb{Z}[Z],R)\to {\rm Hom}_{\mathbb{Z}}(\mathbb{Z}[G],R)=G(\Phi,R) 
\end{equation*}
Lastly, the root elements $\varepsilon_{\alpha}(x)\in G(\Phi,R)$ for $x\in R$ are defined as the image of the map 
$x:\mathbb{Z}[Z]\to R,Z\mapsto x$ under the map $\varepsilon_{\alpha}.$ 

The \textit{elementary subgroup} $E(\Phi,R)$ (or $E(R)$ if $\Phi$ is clear from the context) is defined as the subgroup of $G(\Phi,R)$ generated by the elements 
$\varepsilon_{\alpha}(x)$ for $\alpha\in\Phi$ and $x\in R.$ We refer the reader to \cite{MR3616493} for further details regarding root elements. 

Also note the following property:

\begin{mydef}
Let $R$ be a commutative ring with $1$. Then $G(\Phi,R)$ is \textit{boundedly generated by root elements}, if there is a natural number $N\in\mathbb{N}$ and 
roots $\alpha_1,\dots,\alpha_N\in\Phi$ such that for all $A\in G(\Phi,R)$, there are $a_1,\dots,a_N\in R$ (depending on $A$) such that:
\begin{equation*}
A=\prod_{i=1}^N\varepsilon_{\alpha_i}(a_i).
\end{equation*} 
\end{mydef}

The symbols $\varepsilon_{\alpha}(t)$ are additive in $t\in R$, that is $\varepsilon_{\alpha}(t+s)=\varepsilon_{\alpha}(t)\varepsilon_{\alpha}(s)$ holds for all $t,s\in R$ and a couple of commutator formulas expressed in the next lemma, hold. We will use the additivity and the commutator formulas implicitly throughout the paper usually without reference. 

\begin{Lemma}\cite[Proposition~33.2-33.5]{MR0396773}
\label{commutator_relations}
Let $\alpha,\beta\in\Phi$ be roots with $\alpha+\beta\neq 0$ and $a,b\in R$ be given.
\begin{enumerate}
\item{If $\alpha+\beta\notin\Phi$, then $(\varepsilon_{\alpha}(a),\varepsilon_{\beta}(b))=1.$}
\item{If $\alpha,\beta$ are positive, simple roots in a root subsystem of $\Phi$ isomorphic to $A_2$, then\\ 
$(\varepsilon_{\beta}(b),\varepsilon_{\alpha}(a))=\varepsilon_{\alpha+\beta}(\pm ab).$}
\item{If $\alpha,\beta$ are positive, simple roots in a root subsystem of $\Phi$ isomorphic to $B_2$ with $\alpha$ short and $\beta$ long, then
\begin{align*}
&(\varepsilon_{\alpha+\beta}(b),\varepsilon_{\alpha}(a))=\varepsilon_{2\alpha+\beta}(\pm 2ab)\text{ and}\\
&(\varepsilon_{\beta}(b),\varepsilon_{\alpha}(a))=\varepsilon_{\alpha+\beta}(\pm ab)\varepsilon_{2\alpha+\beta}(\pm a^2b).
\end{align*}
}
\item{If $\alpha,\beta$ are positive simple roots in a root system of $\Phi$ isomorphic to $G_2$ with $\alpha$ short and $\beta$ long, then 
\begin{align*}
&(\varepsilon_{\beta}(b),\varepsilon_{\alpha}(a))=\varepsilon_{\alpha+\beta}(\pm ab)\varepsilon_{2\alpha+\beta}(\pm a^2b)\varepsilon_{3\alpha+\beta}(\pm a^3b)
\varepsilon_{3\alpha+2\beta}(\pm a^3b^2),\\
&(\varepsilon_{\alpha+\beta}(b),\varepsilon_{\alpha}(a))=
\varepsilon_{2\alpha+\beta}(\pm 2ab)\varepsilon_{3\alpha+\beta}(\pm 3a^2b)\varepsilon_{3\alpha+2\beta}(\pm 3ab^2),\\
&(\varepsilon_{2\alpha+\beta}(b),\varepsilon_{\alpha}(a))=\varepsilon_{3\alpha+\beta}(\pm 3ab),\\
&(\varepsilon_{3\alpha+\beta}(b),\varepsilon_{\beta}(a))=\varepsilon_{3\alpha+\beta}(\pm ab)\text{ and}\\
&(\varepsilon_{2\alpha+\beta}(b),\varepsilon_{\alpha+\beta}(a))=\varepsilon_{3\alpha+2\beta}(\pm 3ab).
\end{align*}
}
\end{enumerate}
\end{Lemma}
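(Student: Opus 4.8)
These identities are the low-rank instances of the Chevalley commutator formula, and the plan is to prove them from the exponential description of the root elements, reducing everything to a finite computation of integer structure constants. First I would reduce to rank~$2$: cases~(2)--(4) already place $\alpha,\beta$ as simple roots of a subsystem $\Psi\subseteq\Phi$ of type $A_2$, $B_2$ or $G_2$, while in case~(1) one has $\alpha+\beta\notin\Phi$, hence $\langle\beta,\alpha^\vee\rangle\geq 0$, hence no $i\alpha+j\beta$ with $i,j>0$ is a root and the commutator is trivial; in every case all root subgroups occurring sit inside the subgroup of $G(\Phi,R)$ generated by $\{\varepsilon_\gamma(x):\gamma\in\Psi\}$, so one may work with $\Psi$ in place of $\Phi$. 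Next, a product of root elements has matrix entries that are polynomials with integer coefficients in the relevant parameters, so each asserted identity — for a suitable choice of the signs — amounts to a polynomial identity in $G(\Psi,\mathbb{Z}[s,t])$ with $a=s$, $b=t$; since $G(\Psi,\cdot)$ takes an injective ring homomorphism to an injective group homomorphism, it suffices to verify this after base change to the $\mathbb{C}$-algebra $\mathbb{C}[s,t]$, over which each $\varepsilon_\gamma(z)$ is the \emph{finite} sum $\exp\bigl(z\,d\rho(X_\gamma)\bigr)$.

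Over $\mathbb{C}$ I would first fix the shape of the right-hand side. It is standard that the commutator $(\varepsilon_\beta(b),\varepsilon_\alpha(a))$ lies in the unipotent subgroup $U$ generated by the $\varepsilon_\gamma$ with $\gamma=i\alpha+j\beta$, $i,j>0$, and that, once the finitely many such $\gamma$ are put in a fixed order, every element of $U$ has a unique expression $\prod_\gamma\varepsilon_\gamma(x_\gamma)$. Conjugating such an expression by $h_\alpha(\lambda)h_\beta(\mu)$ (the diagonal torus elements), which sends $\varepsilon_\delta(x)$ to $\varepsilon_\delta(\lambda^{\langle\delta,\alpha^\vee\rangle}\mu^{\langle\delta,\beta^\vee\rangle}x)$, imposes on the exponents of every monomial appearing in $x_\gamma$ a pair of linear equations whose coefficient matrix is, up to transpose, the Cartan matrix of $\Psi$, hence nonsingular; this forces $x_\gamma=c_\gamma\,a^ib^j$ for an integer $c_\gamma$. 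So
\[
(\varepsilon_\beta(b),\varepsilon_\alpha(a))=\prod_{\gamma=i\alpha+j\beta,\ i,j>0}\varepsilon_\gamma\bigl(c_\gamma\,a^ib^j\bigr),\qquad c_\gamma\in\mathbb{Z},
\]
and the problem is reduced to computing the finitely many $c_\gamma$ for $\Psi=A_2,B_2,G_2$.

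To find the $c_\gamma$ I would expand $\exp(bY)\exp(aX)\exp(-bY)\exp(-aX)$ with $X=d\rho(X_\alpha)$, $Y=d\rho(X_\beta)$, using $\exp(bY)\exp(aX)\exp(-bY)=\exp\bigl(a\,\exp(b\operatorname{ad}X_\beta)(X_\alpha)\bigr)$ together with the iterated-bracket identity $\tfrac{1}{k!}(\operatorname{ad}X_\delta)^kX_\eta=\pm\binom{p+k}{k}X_{\eta+k\delta}$, where $p$ is the largest integer with $\eta-p\delta\in\Phi$; the $c_\gamma$ can then be read off the root strings of $\Psi$. For $A_2$ the only datum needed is $p=0$ with $(\operatorname{ad}X_\alpha)^2X_\beta=0$, giving $c_{\alpha+\beta}=\pm1$. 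For $B_2$ and $G_2$ one must, beyond this, collect the factors produced into the chosen order, which creates further brackets among the $X_{i\alpha+j\beta}$; keeping track of the binomial coefficients and root-string lengths that survive produces exactly the coefficient $\pm2$ in the $B_2$ relations and the coefficients $\pm1,\pm2,\pm3$ in the $G_2$ relations of the lemma. Because only the absolute values of the coefficients are asserted, the sign ambiguities in the Chevalley basis and in $\pm\binom{p+k}{k}$ are irrelevant.

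The main obstacle is exactly this last collection step in the $B_2$ and, above all, the $G_2$ cases: one is computing inside the non-abelian unipotent group generated by $\{\varepsilon_{i\alpha+j\beta}:i,j>0\}$, so the order of the factors matters and reordering creates correction terms, and one has to verify that the integers that finally appear are precisely $1$, $2$ and $3$. Everything before it is formal manipulation of finite exponentials, weights, and the naturality of $G(\Phi,\cdot)$.
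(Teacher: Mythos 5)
The paper gives no proof of this lemma: it is cited directly from Humphreys \cite[Proposition~33.2--33.5]{MR0396773}, where the Chevalley commutator formula is established. Your sketch is a correct reconstruction of exactly the kind of argument found in that reference (and in Steinberg's and Carter's treatments): reduce to the rank-$2$ subsystem $\Psi$, view the identity as one in $G(\Psi,\mathbb{Z}[s,t])$ and embed into the complex points, use uniqueness of unipotent factorizations together with conjugation by $h_\alpha(\lambda)h_\beta(\mu)$ to force each commutator coordinate $x_\gamma$ to be a single monomial $c_\gamma a^ib^j$, and then extract the integers $c_\gamma$ from the root-string/binomial identity for the Chevalley basis. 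Two places where you are a bit brisk and a careful write-up would need to say more: the unique-expression statement for the unipotent subgroup should be invoked over the fraction field $\mathbb{C}(s,t)$ (or one should note that the parametrization $\prod_\gamma\varepsilon_\gamma$ is an isomorphism of schemes onto $U$, which holds over any ring), rather than directly over $\mathbb{C}[s,t]$; and the claim in case~(1) that $\alpha+\beta\notin\Phi$ alone forces $i\alpha+j\beta\notin\Phi$ for all $i,j>0$ is true but needs its own short root-system argument, not merely $\langle\beta,\alpha^\vee\rangle\geq 0$. The final collection computation for $B_2$ and $G_2$ is, as you say, the real work, and your sketch only asserts its outcome; incidentally, carrying it out would also reveal a typo in the lemma as printed, since $(\varepsilon_{3\alpha+\beta}(b),\varepsilon_{\beta}(a))$ should equal $\varepsilon_{3\alpha+2\beta}(\pm ab)$, not $\varepsilon_{3\alpha+\beta}(\pm ab)$.
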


\begin{remark}
Depending on the choice of the Chevalley basis, the signs on the arguments on the right hand side the above commutator formulas might vary. Further, if the chosen basis is not a Chevalley basis the arguments on the right hand side might even contain additional coefficients that are not $1$ or $-1.$ These issues are commonly referred to as \textit{pinning}. The sign problem will not be resolved in this paper, due to the fact that our norms are invariant under taking inverses anyway.
\end{remark}

Before continuing, we will define the Weyl group and diagonal elements in $G(\Phi,R)$:

\begin{mydef}
Let $R$ be a commutative ring with $1$ and let $\Phi$ be a root system. Define for $t\in R^*$ and $\phi\in\Phi$ the elements:
\begin{equation*}
w_{\phi}(t):=\varepsilon_{\phi}(t)\varepsilon_{-\phi}(-t^{-1})\varepsilon_{\phi}(t).
\end{equation*}
We will often write $w_{\phi}:=w_{\phi}(1).$ We also define $h_{\phi}(t):=w_{\phi}(t)w_{\phi}(1)^{-1}$ for $t\in R^*$ and $\phi\in\Phi.$
\end{mydef} 

\begin{remark}
The Weyl group of $G(\Phi,R)$ is a quotient of the group generated by the $w_{\phi}$, but we do not need it for our study.
\end{remark}

Using these Weyl group elements, we can obtain the following lemma:

\begin{Lemma}
Let $R$ be a commutative ring with $1$ and $\Phi$ an irreducible root system. Let $\phi,\alpha \in\Phi$ and $x\in R$ be given.
Then for each normally generating set $S$ of $G(\Phi,R)$ one has 
\begin{equation*}
\|\varepsilon_{\phi}(x)\|_S=\|\varepsilon_{w_{\alpha}(\phi)}(x)\|_S.
\end{equation*}   
Here the element $w_{\alpha}(\phi)$ is defined as $\phi-\langle\phi,\alpha\rangle\alpha.$
\end{Lemma}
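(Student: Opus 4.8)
The plan is to conjugate by the Weyl-type element $w_\alpha:=w_\alpha(1)=\varepsilon_\alpha(1)\varepsilon_{-\alpha}(-1)\varepsilon_\alpha(1)\in E(\Phi,R)\subseteq G(\Phi,R)$. The one nontrivial ingredient is the classical Chevalley conjugation formula for $w_\alpha$ (see \cite{MR3616493}): for all roots $\phi,\alpha\in\Phi$ there is a sign $c=c_{\alpha,\phi}\in\{1,-1\}$, not depending on $x$, such that
\begin{equation*}
w_\alpha\,\varepsilon_\phi(x)\,w_\alpha^{-1}=\varepsilon_{w_\alpha(\phi)}(c\,x),
\end{equation*}
where $w_\alpha(\phi)=\phi-\langle\phi,\alpha\rangle\alpha$ is the reflection of $\phi$ in the hyperplane orthogonal to $\alpha$; this lies in $\Phi$ because $\Phi$ is a root system, so $\varepsilon_{w_\alpha(\phi)}$ is defined and the right-hand side makes sense. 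If one prefers to avoid the citation, the formula can be recovered by expanding $w_\alpha$ and applying Lemma~\ref{commutator_relations} case by case according to the possible angles between $\phi$ and $\alpha$ (and using $(\varepsilon_\alpha(1),\varepsilon_\phi(x))\cdot\varepsilon_\phi(x)=\varepsilon_\alpha(1)\varepsilon_\phi(x)\varepsilon_\alpha(1)^{-1}$ to pass from commutators to conjugates); this is routine but tedious, and the undetermined sign $c$ is exactly the pinning ambiguity flagged in the remark after Lemma~\ref{commutator_relations}.

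Next I would record two immediate invariance properties of $\|\cdot\|_S$ read off from the definition of $B_S(k)$. Conjugation invariance: if $g=x_1\cdots x_i$ with each $x_j$ conjugate to some $A_j$ or $A_j^{-1}$ with $A_j\in S$, then $hgh^{-1}=(hx_1h^{-1})\cdots(hx_ih^{-1})$ and $hx_jh^{-1}$ is conjugate to the same $A_j^{\pm1}$, so $hgh^{-1}\in B_S(k)$ whenever $g\in B_S(k)$, and conversely; hence $\|g^h\|_S=\|g\|_S$. Inversion invariance: $(x_1\cdots x_i)^{-1}=x_i^{-1}\cdots x_1^{-1}$ and $x_j^{-1}$ is conjugate to $A_j^{-1}$ (resp.\ $A_j$) when $x_j$ is conjugate to $A_j$ (resp.\ $A_j^{-1}$), so $\|g^{-1}\|_S=\|g\|_S$. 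Both identities remain valid with the value $+\infty$, corresponding to $g\notin\dl S\dr$; in our situation $\dl S\dr=G(\Phi,R)$, so all the norms in the statement are finite anyway.

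The computation then takes one line. By conjugation invariance and the conjugation formula,
\begin{equation*}
\|\varepsilon_\phi(x)\|_S=\|w_\alpha\,\varepsilon_\phi(x)\,w_\alpha^{-1}\|_S=\|\varepsilon_{w_\alpha(\phi)}(c\,x)\|_S.
\end{equation*}
If $c=1$ this is already the claim. If $c=-1$, then additivity of root elements gives $\varepsilon_{w_\alpha(\phi)}(-x)=\varepsilon_{w_\alpha(\phi)}(x)^{-1}$, so inversion invariance yields $\|\varepsilon_{w_\alpha(\phi)}(-x)\|_S=\|\varepsilon_{w_\alpha(\phi)}(x)\|_S$, and combining the displays gives $\|\varepsilon_\phi(x)\|_S=\|\varepsilon_{w_\alpha(\phi)}(x)\|_S$ in every case. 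The only genuine obstacle is the conjugation formula itself; the nuisance sign $c$ causes no trouble precisely because $\|\cdot\|_S$ is inversion invariant, which is the reason the lemma is phrased for this norm rather than with exact bookkeeping of the argument of the root element.
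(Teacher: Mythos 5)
Your proposal is correct and is essentially the paper's own argument, just written out in full: the paper cites Steinberg's Lemma~20(b) (which is exactly the conjugation formula $w_\alpha\varepsilon_\phi(x)w_\alpha^{-1}=\varepsilon_{w_\alpha(\phi)}(cx)$ with $c\in\{\pm1\}$) and implicitly uses the conjugation- and inversion-invariance of $\|\cdot\|_S$ to absorb the sign. Nothing more to add.
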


\begin{proof}
This is a direct consequence of \cite[Lemma~20(b), Chapter~3, p.~23]{MR3616493}.
\end{proof}

Next, we will define certain congruence subgroups and some other notions that we need later on. 

\begin{mydef}
Let $\Phi$ be an irreducible root system and let $R$ be a commutative ring with $1$ in the following. 
\begin{enumerate}
\item{For each pair $(J,L)$, where $J$ is an ideal in $R$ and $L$ an additive subgroup of $J$, we define the subgroup $E(J,L)$ of $G(\Phi,R)$ as the group generated by all elements of the form $\varepsilon_{\alpha}(x)$ for $\alpha\in\Phi$ short, $x\in J$ and $\varepsilon_{\beta}(y)$ for $\beta\in\Phi$ long, $y\in L$.}
\item{For each such pair $(J,L)$, we define the subgroup $\bar{E}(J,L)$ as the normal closure of $E(J,L)$ in $E(R)$.}
\item{For each such pair $(J,L)$, we define the subgroup $E^*(J,L)$ as follows:
\begin{equation*}
E^*(J,L):=\{A\in G(R,\Phi)|(A,E(R))\subset\bar{E}(J,L)\}.
\end{equation*}
}
\item{For an ideal $J$ in $R$ the map $\pi_J:G(\Phi,R)\to G(\Phi,R/J)$ is the group homomorphism induced by the quotient map $R\to R/J.$}
\item{For $k\in\mathbb{N}_0,S\subset G(\Phi,R)$ and $\chi\in\Phi$ set $\varepsilon(S,\chi,k):=\{r\in R|\varepsilon_{\chi}(r)\in B_S(k)\}$.}
\end{enumerate}
\end{mydef}

\subsection{Central elements of Chevalley groups and level ideals}

Let $G$ be a complex, simply-connected, semi-simple Lie-group with irreducible root system $\Phi$ which is not $B_2$ or $G_2$ and positive, simple roots
$\Pi.$ Then there are representations $\rho_i:G\to GL(V_i)$ for $1\leq i\leq u$ such that for $V:=V_1\oplus\cdots\oplus V_u$ the induced direct sum representation 
$\rho:G\to GL(V)$ is faithful. The precise construction is explained in \cite[Chapter~3, p.~29]{MR3616493}. In case of $\Phi\neq B_2$ or $G_2$, this group is what we refer to as the split Chevalley group $G(\Phi,R).$ Setting further $n_i:=dim_{\mathbb{C}}(V_i)$ for $1\leq i\leq u,$ there is the following description of central elements in $G(\Phi,R)$: 

\begin{Lemma}
\label{central_elements}
Let $R$ be a reduced, commutative ring with $1$ and $\Phi$ an irreducible root system, which is not $B_2$ or $G_2$. Further, let $A\in G(\Phi,R)$ commute with the elements of $E(\Phi,R)$. Then there are $t_1,\dots,t_u\in R^*$ such that $A=(t_1 I_{n_1})\oplus\cdots\oplus(t_u I_{n_u})\in GL(R^{n_1+n_2+\dots+n_u}).$
Furthermore, elements of this form are central in $G(\Phi,R)$.
\end{Lemma}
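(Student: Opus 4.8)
The plan is to prove the two assertions of the lemma separately. A preliminary observation used throughout is that every element of $G(\Phi,R)$ is block-diagonal for the decomposition $V_{\mathbb Z}=V_{1,\mathbb Z}\oplus\cdots\oplus V_{u,\mathbb Z}$: since $\rho(g)=\rho_1(g)\oplus\cdots\oplus\rho_u(g)$ preserves each $V_i$, the coordinate functions $t_{kl}\in\mathbb Z[G]$ joining two distinct blocks vanish identically, so $\pi(t_{kl})=0$ for every $\pi\in{\rm Hom}_{\mathbb Z}(\mathbb Z[G],R)=G(\Phi,R)$. The ``furthermore'' is then immediate: if $A=(t_1I_{n_1})\oplus\cdots\oplus(t_uI_{n_u})\in G(\Phi,R)$ with $t_i\in R^*$, then on each block $A$ acts by the scalar $t_iI_{n_i}$, which is central in $GL_{n_i}(R)$, so $A$ commutes with every block-diagonal matrix and hence with all of $G(\Phi,R)$.

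For the forward direction I would first reduce to integral domains. Since $R$ is reduced, $\bigcap_{\mathfrak p\in{\rm Spec}\,R}\mathfrak p=0$, so $R$ embeds into $\prod_{\mathfrak p}R/\mathfrak p$. For each prime $\mathfrak p$ the reduction homomorphism $\pi_{\mathfrak p}\colon G(\Phi,R)\to G(\Phi,R/\mathfrak p)$ sends $\varepsilon_\alpha(x)$ to $\varepsilon_\alpha(\bar x)$ and hence carries $E(\Phi,R)$ onto $E(\Phi,R/\mathfrak p)$, so $\pi_{\mathfrak p}(A)$ again centralizes the elementary subgroup. Granting the statement over the domain $R/\mathfrak p$, each $\pi_{\mathfrak p}(A)$ is block-scalar; comparing entries through $R\hookrightarrow\prod_{\mathfrak p}R/\mathfrak p$ then forces every off-block and every within-block off-diagonal entry of $A$ to vanish and forces the diagonal entries of $A$ to be constant on each block, so $A=(t_1I_{n_1})\oplus\cdots\oplus(t_uI_{n_u})$ with $t_i\in R$; since $A$ is invertible, so is each $t_i$. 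Thus it is enough to treat $R=D$ an integral domain and $A\in G(\Phi,D)$ centralizing $E(\Phi,D)$, and to show that $A$ is block-scalar.

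Here I would distinguish two cases. If $D$ is finite it is a field, and for a simply-connected split Chevalley group over a field one has $E(\Phi,D)=G(\Phi,D)$, so $A$ is central in $G(\Phi,D)$. If $D$ is infinite, set $F={\rm Frac}(D)$ and pass to $\overline F$: each root subgroup $\{\varepsilon_\alpha(x):x\in D\}$ is Zariski dense in the one-parameter unipotent subgroup $\{\varepsilon_\alpha(x):x\in\overline F\}$ (a polynomial vanishing on the infinite set $D$ vanishes identically), and these root subgroups generate $G(\Phi,\overline F)=E(\Phi,\overline F)$, so $E(\Phi,D)$ is Zariski dense in $G(\Phi,\overline F)$; since the centralizer of $A$ in $GL(V\otimes\overline F)$ is Zariski closed and contains $E(\Phi,D)$, it contains $G(\Phi,\overline F)$, and as $A\in G(\Phi,D)\subseteq G(\Phi,\overline F)$ this means $A$ is central in $G(\Phi,\overline F)$. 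In either case $A$ is central in $G(\Phi,L)$ for some field $L$. A central element $z$ lies in every maximal torus and is killed by every root (conjugation by a torus element $h$ sends $\varepsilon_\alpha(x)$ to $\varepsilon_\alpha(\alpha(h)x)$), so on $V_i\otimes L$ it scales the $\lambda$-weight space by $\lambda(z)$; as every weight $\lambda$ of $V_i$ lies in $\varpi_i+\mathbb Z\Phi$ with $\varpi_i$ the highest weight, $z$ acts on all of $V_i\otimes L$ by the single scalar $\varpi_i(z)\in L^*$. Therefore $A$ is block-scalar, completing the reduction; the facts about simply-connected Chevalley groups used here — that the elementary subgroup exhausts the group over a field, and the description of the centre — are recorded in \cite{MR3616493}.

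The step I expect to be the real obstacle is bridging ``$A$ centralizes $E(\Phi,R)$'' and ``$A$ is central in the ambient group'', since $E(\Phi,R)$ need not equal $G(\Phi,R)$ for a general ring. Over fields this gap is closed by the identity $E=G$ for simply-connected groups, and over a reduced ring the device is to descend to domains and then, in the infinite case, substitute Zariski density of $E(\Phi,D)$ in $G(\Phi,\overline F)$ for that identity — where it is crucial that $A$ is assumed to \emph{be} an element of $G(\Phi,D)$, not merely to centralize it. A further point that needs care is small characteristic, e.g.\ $D=\mathbb F_2$ where the split torus is trivial; this is exactly why the argument is routed through $E=G$ and the weight-lattice picture of the centre rather than through any genericity of diagonal elements.
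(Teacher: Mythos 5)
Your proof takes essentially the same route as the paper's: reduce from reduced rings to integral domains via the vanishing of $\bigcap_{\mathfrak p}\mathfrak p$, split the domain case into finite (a field, where $E(\Phi,K)=G(\Phi,K)$ forces $A$ to be central and Steinberg's description of the centre gives the block-scalar form) and infinite (where Zariski density of the root subgroups over the algebraic closure of the fraction field upgrades "centralizes $E(\Phi,D)$" to "central in $G(\Phi,\overline F)$"). The paper cites Steinberg's Lemma 28 for the fact that a central element of $G(\Phi,K)$ is a product $\prod h_{\alpha_i}(t_i)$ with $\prod t_i^{\langle\phi,\alpha_i\rangle}=1$, and then works out the weight-space action explicitly; your weight-lattice paragraph is the same computation phrased more abstractly, and you correctly note it rests on the same Steinberg reference. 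One small thing your write-up adds that the paper's appendix proof omits entirely is the verification of the ``furthermore'' clause via the block-diagonal structure of $G(\Phi,R)$; that part is correct and worth having.
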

 
The proof for this lemma is in the Appendix. Presumably this statement holds for general rings $R$, but we were not able to find a reference.
Next, we give the definitions of $G(B_2,R)={\rm Sp}_4(R)$ and $G_2(R)$. While we do not specify the representations $\rho$ used, both 
are still instances of our general definition of $G(\Phi,R)$ in Subsection~\ref{naturality_Chevalley}.

\begin{mydef}
Let $R$ be a commutative ring with $1$ and let 
\begin{equation*}
Sp_4(R):=\{A\in R^{4\times 4}|A^TJA=J\} 
\end{equation*}
be given with 
\begin{equation*}
J=
\begin{pmatrix}
0 & 0 & 1 & 0\\
0 & 0 & 0 & 1\\
-1 & 0 & 0 & 0\\
0 & -1 & 0 & 0\\
\end{pmatrix}
\end{equation*}
The root system $B_2$ has four different positive roots namely, $B_2^+=\{\alpha,\beta,\alpha+\beta,2\alpha+\beta\}$ with 
$\alpha$ short and $\beta$ long and both simple. The corresponding root elements have (subject to the choice of maximal torus) the following form for $t\in R$:
\begin{align*}
&\varepsilon_{\alpha}(t)=I_4+t(e_{12}-e_{43}),\varepsilon_{\alpha+\beta}(t)=I_4+t(e_{14}+e_{23})\\
&\varepsilon_{\beta}(t)=I_4+te_{24},\varepsilon_{2\alpha+\beta}(t)=I_4+te_{13}
\end{align*}
and $\varepsilon_{\phi}(t)=(\varepsilon_{-\phi}(t))^T$ for negative roots $\phi\in B_2.$
\end{mydef}

We could specify an explicit matrix description for $G_2$ as well, but this would be rather lengthy and instead we refer to the description in the appendix of 
\cite{MR1487611}. This appendix gives $G_2$ as a subgroup-scheme of ${\rm GL}_8$.
We will not specify which elements of $G_2\subset{\rm GL}_8$ correspond to root elements in particular, but note the positive roots in $G_2.$ They are 
\begin{equation*}
G_2^+=\{\alpha,\beta,\alpha+\beta,2\alpha+\beta,3\alpha+\beta,3\alpha+2\beta\}
\end{equation*}
with $\alpha$ short and $\beta$ long and both simple. 

Next, we will define various variants of level ideals:

\begin{mydef}
\label{central_elements_def}
Let $R$ be a commutative ring with $1$ and let $A\in G(\Phi,R)$ be given. The \textit{level ideal $l(A)$} is defined as 
\begin{enumerate}
\item{in case $\Phi\neq B_2$ or $G_2$ as the ideal in $R$ generated by the elements $a_{i,j}$ for all $1\leq i\neq j\leq n_1+\cdots+n_u$ and the elements
 $a_{i,i}-a_{n_1+\cdots n_w,n_1+\cdots+ n_w}$ for all $1\leq i<n_1+\cdots+n_u$ and the smallest $w\in\{1,\dots,u\}$ with $i<n_1+\cdots+ n_w.$ 
}
\item{in case $\Phi=B_2$ as $l(A):=\pre{a_{i,j},(a_{i,i}-a_{j,j})}{1\leq i\neq j\leq 4}.$}
\item{in case $\Phi=G_2$ as $l(A):=\pre{a_{i,j},(a_{i,i}-a_{j,j})}{1\leq i\neq j\leq 8}.$}
\end{enumerate}
Furthermore, define the following ideals: If $\Phi=B_2$ define 
\begin{equation*}
l(A)_2:=\pre{a_{i,j}^2,(a_{i,i}-a_{j,j})^2}{1\leq i\neq j\leq 4}
\end{equation*}
and if $\Phi=G_2$ define 
\begin{equation*}
l(A)_3:=\pre{a_{i,j}^3,(a_{i,i}-a_{j,j})^3}{1\leq i\neq j\leq 8}.
\end{equation*} 
\end{mydef} 

\begin{remark}
\hfill
\begin{enumerate}
\item{In case $\Phi=B_2$ or $G_2$, note $l(A)\subset\sqrt{l(A)_2}$ or $l(A)\subset\sqrt{l(A)_3}.$}
\item{The important point in the following discussion is that all of these ideals are finitely generated.}
\end{enumerate}
\end{remark}

\section{Fundamental propositions and the proof of Theorem~\ref{exceptional Chevalley}}
\label{proof_main}

Recall the (following equivalent version of the) main theorem:

\begin{Theorem}
\label{exceptional Chevalley}
Let $\Phi$ be an irreducible root system of rank at least $2$ and let $R$ be a commutative ring with $1$. Additionally, let $G(\Phi,R)$ be boundedly generated by root elements and if $\Phi=B_2$ or $G_2$, we further assume $(R:2R)<\infty.$
Then there is a constant $C(\Phi,R)\in\mathbb{N}$ such that
\begin{equation*}
\Delta_k(G(\Phi,R))\leq C(\Phi,R)k
\end{equation*}
for all $k\in\mathbb{N}.$
\end{Theorem}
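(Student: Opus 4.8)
The plan is to reduce the theorem to a single key statement: from \emph{any} finite normally generating set $S$ one can write an \emph{arbitrary} root element $\varepsilon_\chi(x)$ as a product of at most $C_0(\Phi,R)\cdot|S|$ conjugates of elements of $S\cup S^{-1}$, where $C_0$ depends only on $\Phi$ and $R$, not on $S$. Granting this, bounded generation of $G(\Phi,R)$ by root elements gives a fixed number $N=N(\Phi,R)$ of root factors needed to express any group element; multiplying bounds yields $\|A\|_S\le N\cdot C_0(\Phi,R)\cdot|S|$ for every $A$, hence $\Delta_k(G(\Phi,R))\le N C_0(\Phi,R)\,k$ and $C(\Phi,R):=NC_0(\Phi,R)$ works. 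So the whole content is the uniform bound on $\|\varepsilon_\chi(x)\|_S$.

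To produce that uniform bound I would argue as follows. Fix a finite normally generating set $S=\{A_1,\dots,A_k\}$. Since $\langle\!\langle S\rangle\!\rangle = G(\Phi,R)$ and $G(\Phi,R)/E(\Phi,R)$ together with the normal-subgroup structure of $E(\Phi,R)$ is governed by the level ideals, the ideals $l(A_i)$ (or their radical-square/radical-cube analogues $l(A_i)_2$, $l(A_i)_3$ in the $B_2,G_2$ cases) must generate the unit ideal of $R$: otherwise all the $A_i$, and hence all their conjugates and products, would lie in a proper congruence-type subgroup, contradicting normal generation. Concretely there are finitely many elements $r^{(i)}_1,\dots$ in the various $l(A_i)$ (and unit coefficients) summing to $1$. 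Each generator of $l(A_i)$ is a matrix entry $(A_i)_{pq}$ or a difference $(A_i)_{pp}-(A_i)_{qq}$, and using the commutator formulas of Lemma~\ref{commutator_relations} one extracts, from $A_i$ together with a bounded number of root elements, a root element $\varepsilon_\chi(\text{generator})$ as a bounded product of conjugates of $A_i^{\pm 1}$ — this is the ``first step'' in the two-step strategy described in the introduction. Summing over the partition of unity then expresses $\varepsilon_\chi(1)$, and hence (by additivity $\varepsilon_\chi(x)=\varepsilon_\chi(1)^{\text{conj}}$-type manipulations, or directly $\varepsilon_\chi(x)$ via scaling each generator by $x$) an arbitrary $\varepsilon_\chi(x)$, as a product of $O(k)$ conjugates.

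The non-uniformity problem — a priori the number of generators of $\sum_i l(A_i)$ needed to reach $1$, and the number of root-element manipulations per generator, could grow with $S$ — is where I expect G\"odel/first-order compactness to enter, exactly as the introduction advertises. The statement ``there exist $A_1,\dots,A_k\in G(\Phi,R)$ normally generating $G(\Phi,R)$ but with $\|\varepsilon_\alpha(1)\|_{\{A_1,\dots,A_k\}} > m$'' can, for fixed $k$ and $m$, be expressed in a suitable first-order language over the ring; if the bound failed uniformly one would get, by compactness, a model (a ring satisfying the hypotheses) in which $\varepsilon_\alpha(1)$ is not a bounded product of conjugates despite normal generation — contradicting that in that model $E(\Phi,R)$ is perfect / the structure theory forces a finite bound. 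Making the first-order encoding legitimate (finitely generated ideals, the polynomial relations $P$ cutting out $G(\Phi,R)\subset\mathrm{GL}_n$, bounded generation by root elements as a first-order scheme for each element) is the technical heart, and the $B_2$, $G_2$ cases require the extra hypothesis $(R:2R)<\infty$ precisely because the commutator formulas in Lemma~\ref{commutator_relations}(3),(4) produce coefficients $2$ and $a^2$, $a^3$, so one only recovers $l(A)_2$ or $l(A)_3$ rather than $l(A)$ directly, and one needs $R/2R$ finite to convert the radical containment $l(A)\subset\sqrt{l(A)_2}$ into a \emph{bounded} number of steps. I expect this last point — the uniform handling of the radical and of the $2$-torsion in the short/long root interaction — to be the main obstacle, and it is the reason the proof is split according to $\Phi$ in Sections~\ref{proof_main} and~\ref{proof_fundamental_prop}.
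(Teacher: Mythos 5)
Your overall scaffold is right: reduce to a linear-in-$|S|$ bound on $\|\varepsilon_\chi(x)\|_S$, use that level ideals of a normally generating set must sum to $R$, invoke the structure theory of $E(\Phi,R)$-normal subgroups, handle $B_2,G_2$ separately because the commutator formulas carry coefficients $2$, $a^2$, $a^3$, and close with bounded generation by root elements. But the step where you invoke compactness has a real gap, and it is precisely the step the paper is designed to get right.

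You propose applying G\"odel's compactness to statements of the form ``there exist $A_1,\dots,A_k$ normally generating $G(\Phi,R)$ with $\|\varepsilon_\alpha(1)\|_S>m$.'' This has two problems. First, ``$S$ normally generates $G(\Phi,R)$'' is not first-order expressible: it is an unbounded existential quantification over products of conjugates, and a model of the negation of your family of sentences need not be a ring over which $S$ actually normally generates. Second, even if you could encode it (say by substituting a first-order consequence such as $\Pi(S)=\emptyset$), this version of compactness would only give, for each fixed $k$, \emph{some} bound $C_k(\Phi,R)$ on $\Delta_k$, with no control on how $C_k$ grows in $k$ — which is exactly the shape of Morris' result that the paper explicitly contrasts itself with. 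The paper instead applies compactness to a \emph{single} element $A$: the structure theorems of Abe (for $\Phi\neq B_2,G_2$) and Costa--Keller (for $B_2,G_2$) say that some bounded-degree power of each entry of $l(A)$ (resp.\ $2l(A)_2$, $l(A)_3$) appears as the argument of a root element obtainable within some number of conjugates of $A^{\pm1}$. That statement \emph{is} encodable in first-order terms about a single unknown matrix $\C A$, the inconsistency of the relevant theory follows from Abe/Costa--Keller, and compactness extracts a uniform constant $L(\Phi)$ for a single $A$. Only then does one sum ideals $I(A_1)+\dots+I(A_k)$, and linearity in $k$ falls out because the per-element constant $L(\Phi)$ is fixed; no compactness over the whole set is ever used. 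You gesture at this in the sentence about extracting a root element from a single $A_i$ in a bounded number of conjugates, but your proposed compactness target does not deliver that bound.

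A second, smaller inaccuracy: you attribute the need for $(R:2R)<\infty$ to ``converting the radical containment $l(A)\subset\sqrt{l(A)_2}$ into a bounded number of steps.'' The radical containment is only used to show the sum of the $l(A_i)_2$ equals $R$; it never needs to be traversed in bounded steps. The finiteness of $R/2R$ enters elsewhere: in the $B_2,G_2$ cases the compactness argument only yields root elements with arguments in $2R$ (since Costa--Keller produces $2l(A)_2$ or $l(A)_3$). One then observes that the subgroup $N$ generated by $\{\varepsilon_\phi(2a)\}$ has finite index when $R/2R$ is finite, that $S$ must map to a normal generating set of the finite quotient $G/N$, and then a finite combinatorial argument (bounding products in $G/N$ and lifting, controlling the error term inside $N$ by the already-established $2R$-bound) handles representatives of $R/2R$. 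That is the content of Propositions~\ref{non-simply-laced-reduction} and~\ref{non-simply-laced-reduction_G2}, and it cannot be replaced by a remark about radicals.
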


The main technical tool to prove the theorem is the following: 

\begin{Theorem}
\label{fundamental_reduction}
Let $\Phi$ be an irreducible root system of rank at least $2$ and let $R$ be a commutative ring with $1$. 
Then there are constants $L(\Phi)\in\mathbb{N}$ (depending only on $\Phi$) such that for $A\in G(\Phi,R)$ it holds that
\begin{enumerate}
\item{for $\Phi\neq B_2,G_2$, there is an ideal $I(A)\subset\varepsilon(A,\chi,L(\Phi))$ for $\chi$ a short root. 
This ideal has the property $l(A)\subset\sqrt{I(A)}.$} 
\item{for $\Phi=B_2$ one has $2l(A)_2\subset\varepsilon(A,\phi,L(\Phi))$ for $\phi\in B_2$ arbitrary.}
\item{for $\Phi=G_2$ one has $l(A)_3\subset\varepsilon(A,\chi,L(\Phi))$ for $\chi=3\alpha+\beta.$}
\end{enumerate}
\end{Theorem}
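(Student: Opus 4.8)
The plan is to argue separately for each type of root system and, in every case, to prove first an \emph{unbounded} version of the statement --- that the root elements in question lie in the normal closure $\dl A\dr$ --- and then to eliminate the dependence of the number of conjugates on $R$ and $A$ by the Compactness Theorem; these are exactly the two ingredients announced in the introduction.

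For $\Phi\neq B_2,G_2$ I would invoke the structure theory of subgroups of $E(\Phi,R)$ normalised by $E(\Phi,R)$ (Suslin for ${\rm SL}_n$; Abe, Vaserstein and others for the remaining classical and exceptional types), which is available precisely because $\Phi$ has rank at least $2$. Since $l(A)$ is by construction the level ideal of $A$, the subgroup of $E(\Phi,R)$ generated by all $E(\Phi,R)$-conjugates of the commutators $(\varepsilon_\alpha(x),A)$ is normalised by $E(\Phi,R)$, is contained in $\dl A\dr$, and has level ideal $l(A)$; hence it contains $\bar E(I(A),L(A))$ for a suitable ideal $I(A)$ with $l(A)\subseteq\sqrt{I(A)}$ (for simply-laced $\Phi$ one can take $I(A)=l(A)$; for multiply-laced $\Phi$ the short-root ideal one extracts may be strictly smaller, which is the reason for the radical in the statement). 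In particular $\varepsilon_\chi(r)\in\dl A\dr$ for every short root $\chi$ and every $r\in I(A)$. For $\Phi=B_2$ and $\Phi=G_2$ the scheme is the same, but the passage from short root elements to long root elements is carried out with the commutator identities of Lemma~\ref{commutator_relations}(3),(4); each such step contributes either a product of two parameters and a factor $2$ (in $B_2$) or a cube of a parameter and a factor $3$ (in $G_2$), which is the origin of the ideals $2l(A)_2$ and $l(A)_3$. Tracking these identities (using in $B_2$ that one can move between all four positive roots, and in $G_2$ that $3\alpha+\beta$ is reached from $\alpha$ and $\beta$) yields $\varepsilon_\phi(r)\in\dl A\dr$ for all $\phi\in B_2$, $r\in 2l(A)_2$, and $\varepsilon_{3\alpha+\beta}(r)\in\dl A\dr$ for $r\in l(A)_3$.

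Now fix $\Phi$, the target root $\chi$ and the target ideal $J(A)$ (one of $I(A)$, $2l(A)_2$, $l(A)_3$). All the level ideals are finitely generated by a \emph{fixed} list of polynomial expressions in the entries of $A$ (see the remark after Definition~\ref{central_elements_def}), and $G(\Phi,\cdot)$ is an affine $\mathbb Z$-scheme, so its multiplication, inversion, the root maps $\varepsilon_\psi$ and conjugacy are all given by polynomials over $\mathbb Z$ in the matrix entries; hence, once constants for $A$ and $r$ are fixed, for each $L$ the conditions ``$r\in J(A)$'' and ``$\varepsilon_\chi(r)\in B_A(L)$'' are first-order in the language of commutative rings, the latter being a finite disjunction of existential formulas over the length $\leq L$ and the finitely many sign choices. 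If the relevant part of the theorem failed for every $L$, then for each $L\in\mathbb N$ there would be a commutative ring $S$, an element $A\in G(\Phi,S)$, and $r\in J(A)$ with $\varepsilon_\chi(r)\notin B_A(L)$. Adjoin to the theory of commutative rings with $1$ new constants for the entries of a matrix $A$ satisfying the defining equations of $G(\Phi,\cdot)$, for an element $r$, and for finitely many scalars together with the equation expressing $r\in J(A)$, and add the sentences $\varepsilon_\chi(r)\notin B_A(L)$ for all $L\in\mathbb N$; every finite subtheory is satisfiable because $B_A(L)\subseteq B_A(L+1)$, so it suffices to use a ring for the largest $L$ occurring. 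By the Compactness Theorem the full theory has a model, i.e.\ there are $R^\ast$, $A^\ast\in G(\Phi,R^\ast)$ and $r^\ast\in J(A^\ast)$ with $\varepsilon_\chi(r^\ast)\notin\bigcup_L B_{A^\ast}(L)=\dl A^\ast\dr$, contradicting the first step. Thus some $L(\Phi)$ works, and with it one reads off $J(A)\subseteq\varepsilon(A,\chi,L(\Phi))$, together with $l(A)\subseteq\sqrt{I(A)}$ in case (1) and $l(A)\subseteq\sqrt{2l(A)_2}$, $l(A)\subseteq\sqrt{l(A)_3}$ in cases (2), (3) by the remark after Definition~\ref{central_elements_def}.

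The hard part is the first step for $\Phi=B_2$ and $\Phi=G_2$: over a general commutative ring the normal subgroup structure in rank $2$ is genuinely more subtle --- this is the rank-$2$ phenomenon referred to in the introduction --- and one must check that the factors $2,3$ and the quadratic and cubic parameters in Lemma~\ref{commutator_relations}(3),(4) are the only losses, i.e.\ that enough relations persist in every characteristic to reach all of $2l(A)_2$, respectively $l(A)_3$; getting a larger ideal is impossible, getting exactly these is what the theorem asserts. A lesser, bookkeeping obstacle is to confirm that ``$\varepsilon_\chi(r)\in B_A(L)$'' and membership in the level ideal are genuinely first-order, which is precisely why it matters that these ideals are finitely generated by polynomials in the entries of $A$.
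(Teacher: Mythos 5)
Your overall two-step strategy --- an unbounded statement coming from the classification of $E(\Phi,R)$-normalised subgroups (Abe for higher rank; Costa--Keller for $B_2$ and $G_2$), followed by G\"odel's compactness theorem to remove the dependence of the bound on $R$ and $A$ --- is exactly the paper's. You also correctly identify why compactness is applicable: the level ideals $l(A)$, $l(A)_2$, $l(A)_3$ are generated by a fixed list of polynomials in the entries of $A$, so the relevant containments unfold into first-order conditions. (One small inaccuracy: in $G_2$ only the cubes from Lemma~\ref{commutator_relations}(4) enter; the factor $3$ is available from Corollary~\ref{simplified_Keller_G2} but is not used, which is why the relevant ideal is $l(A)_3$ and not $3l(A)_3$.)

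There is, however, a genuine gap in your compactness step for case (1). You introduce constants for $r$ and for scalars "expressing $r\in J(A)$", which works in cases (2) and (3) because $J(A)=2l(A)_2$ or $l(A)_3$ is a \emph{fixed} ideal given by explicit polynomial generators. In case (1), though, $I(A)$ is precisely what the theorem asserts to exist --- it is an output of the proof, not a fixed formula in $A$ --- so "$r\in I(A)$" is not a first-order condition in the sense needed, and your suggestion $I(A)=l(A)$ for simply-laced $\Phi$ is not justified: Abe's sandwich theorem yields only $l(A)\subset\sqrt{J}$, not $l(A)\subset J$, so one knows merely that \emph{some unknown power} of each generator of $l(A)$ lands in $\dl A\dr$. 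The paper circumvents this by applying compactness separately for each index pair $(k,l)$, with sentences $\theta_r$ that quantify over exponents $v\leq r$ as well as lengths, asserting $\varepsilon_\chi(e(k,l)^v)\notin B_A(r)$ for all $v\leq r$. Abe's theorem makes some $\theta_r$ fail, compactness then yields a uniform $L_{kl}$ and an exponent $v\leq L_{kl}$ with $e(k,l)^v\in\varepsilon(A,\chi,L_{kl})$, and only \emph{afterwards} is $I(A)$ defined as the ideal generated by those powers, with Lemma~\ref{A2_parts} promoting $e(k,l)^{v}\in\varepsilon(A,\chi,L_{kl})$ to $e(k,l)^{v}R\subset\varepsilon(A,\chi,8L_{kl})$. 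Your sketch does not contain this per-generator, variable-exponent device and so, as written, does not produce $L(\Phi)$ in case (1).
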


We further need the two following technical observations. First:

\begin{Lemma}
\label{necessary_cond_conj_gen}
Let $\Phi$ be an irreducible root system of rank at least $2$ and $R$ a commutative ring with $1$ and $G:=G(\Phi,R)$ the corresponding split Chevalley group. 
Further let $S$ be a normally generating set of $G.$ Then $\sum_{A\in S}l(A)=R.$ Also if we define for $T\subset G$ the set 
\begin{equation*}
\Pi(T):=\{ m\text{ proper maximal ideal of $R$}|\ \forall A\in T:l(A)\subset m\}
\end{equation*}
then $\Pi(S)=\emptyset$ is equivalent to $\sum_{A\in S}l(A)=R.$
\end{Lemma}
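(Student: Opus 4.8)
The plan is to first establish the equivalence between $\Pi(S)=\emptyset$ and $\sum_{A\in S}l(A)=R$, which is essentially a restatement of the standard fact that an ideal of a commutative ring with $1$ equals the whole ring iff it is contained in no proper maximal ideal. Given a family $\{l(A)\}_{A\in S}$, write $I:=\sum_{A\in S}l(A)$; then $\Pi(S)$ is precisely the set of proper maximal ideals $m$ with $l(A)\subseteq m$ for all $A\in S$, which is the same as the set of proper maximal ideals containing $I$. By the usual Zorn's Lemma argument, $I=R$ iff $I$ lies in no proper maximal ideal, i.e. iff $\Pi(S)=\emptyset$. Since $S$ normally generates $G$, only finitely many $A\in S$ (in fact it suffices that the $l(A)$ are finitely generated, which was noted in the remark after Definition~\ref{central_elements_def}) are needed, but we do not even require finiteness here.

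The real content is therefore to show $\sum_{A\in S}l(A)=R$ whenever $S$ normally generates $G=G(\Phi,R)$. I would argue by contradiction: suppose $I:=\sum_{A\in S}l(A)\subsetneq R$, so there is a proper maximal ideal $m\supseteq I$, equivalently $l(A)\subseteq m$ for every $A\in S$. The key observation is that $l(A)\subseteq m$ forces the reduction $\pi_m(A)\in G(\Phi,R/m)$ to be \emph{central} (indeed scalar): from Definition~\ref{central_elements_def}, all off-diagonal entries $a_{i,j}$ and all differences $a_{i,i}-a_{j,j}$ of diagonal entries lie in $m$, so modulo $m$ the matrix $\pi_m(A)$ is diagonal with all diagonal entries equal within each block $V_w$, i.e. of the shape described in Lemma~\ref{central_elements} (for $\Phi\neq B_2,G_2$; the $B_2$ and $G_2$ cases are completely analogous since the single-block level ideals in parts (2), (3) of the definition force $\pi_m(A)$ to be genuinely scalar). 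Hence $\pi_m(A)$ lies in the centre $Z$ of $G(\Phi,R/m)$ for every $A\in S$.

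Now push the normal generation through $\pi_m$: since $\dl S\dr=G$, the images $\{\pi_m(A):A\in S\}$ normally generate $\pi_m(G)$, and because $\pi_m(A)\in Z$ for all such $A$ and $Z$ is central, the normal closure $\dl\{\pi_m(A)\}\dr$ is already contained in $Z$. Therefore $\pi_m(G(\Phi,R))\subseteq Z$ is abelian (indeed $\pi_m(E(\Phi,R))$ is trivial or central, but then root elements $\varepsilon_\alpha(1)$ map into $Z$, which is false). This contradicts the fact that over the field $R/m$ the elementary group $E(\Phi,R/m)$ is noncentral: for any root $\alpha$, $\varepsilon_\alpha(1)$ has a nonzero off-diagonal entry, so it is not central, yet it lies in $\pi_m(G(\Phi,R))$ since $\pi_m$ is surjective onto $G(\Phi,R/m)\supseteq E(\Phi,R/m)$. (More carefully: $E(\Phi,R/m)\subseteq \pi_m(G(\Phi,R))$ always, and $E(\Phi,R/m)$ is not contained in the centre because, e.g., the two root elements $\varepsilon_\alpha(1)$ and $\varepsilon_\beta(1)$ for $\alpha+\beta\in\Phi$ fail to commute by Lemma~\ref{commutator_relations}.) This contradiction shows $I=R$.

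The main obstacle I anticipate is making the step ``$l(A)\subseteq m \Rightarrow \pi_m(A)$ central'' airtight uniformly across $\Phi$, especially the $B_2$ and $G_2$ cases where we have not recorded a centrality lemma in full generality — there one uses that the single-block level ideal from Definition~\ref{central_elements_def} forces $\pi_m(A)$ to be a scalar matrix in $\mathrm{GL}_4(R/m)$ or $\mathrm{GL}_8(R/m)$, which one checks is central in $\mathrm{Sp}_4$ resp.\ $G_2$ directly. A secondary subtlety is that $\pi_m$ need not be surjective onto all of $G(\Phi,R/m)$ if $R/m$ fails to be the right kind of field for smoothness issues — but since $G(\Phi,\cdot)$ is a split (Chevalley–Demazure) group scheme, $\pi_m$ is surjective onto $G(\Phi,R/m)$ for \emph{every} quotient, and in any case $E(\Phi,R/m)$ always lies in the image of $G(\Phi,R)\to G(\Phi,R/m)$ because root elements lift, so the noncentrality argument goes through regardless.
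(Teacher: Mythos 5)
Your proposal is correct and follows essentially the same route as the paper: reduce modulo a putative maximal ideal $m\supseteq\sum_{A\in S}l(A)$, observe that each $\pi_m(A)$ is central (block-scalar), conclude via normal generation that $\pi_m(G)$ lies in the centre, and contradict with a non-central root element. The only cosmetic difference is the final contradiction (you use a failing commutator between $\varepsilon_\alpha(1)$ and $\varepsilon_\beta(1)$, the paper observes that $\varepsilon_\phi(\lambda+m)$ cannot be diagonal when $\lambda\notin m$), which is immaterial.
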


\begin{proof}
Observe that for $I:=\sum_{A\in S}l(A)$, we have that $\pi_I(A)$ is scalar for all $A\in S$ if $\Phi=B_2$ or $G_2$ and has the form described in 
Lemma~\ref{central_elements} if $\Phi\neq B_2$ or $G_2$. Next, assume there is a proper maximal ideal
$m$ containing $I$. As $S$ normally generates $G$, this implies that $\pi_{m}$ maps $G$ only to diagonal matrices. But $m\neq R$ holds, so we can pick an element $\lambda\notin m$ and then $\varepsilon_{\phi}(\lambda+ m)$ would be diagonal for all $\phi\in\Phi$ and so $\lambda\in m$. This contradiction proves $I=R.$ Lastly the equivalence of $\Pi(S)=\emptyset$ and $\sum_{A\in S}l(A)=R$ is clear. 
\end{proof}

And second:

\begin{Lemma}
\label{congruence_fin}
Let $R$ be a commutative ring with $1$ such that $(R:2R)<\infty$ and such that $G:=G(\Phi,R)$ is boundedly generated by root elements for $\Phi=B_2$ or $G_2$. Further define 
\begin{equation*}
N:=\dl\varepsilon_{\phi}(a)|a\in 2R,\phi\in\Phi\dr.
\end{equation*}
Then the group $G/N$ is finite.
\end{Lemma}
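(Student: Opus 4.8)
The plan is to show that $G/N$ is a finitely generated group in which every element has finite order bounded by a constant, and that moreover it is nilpotent-by-finite or abelian-by-finite, so that finiteness follows from a Burnside-type argument made elementary by the structural constraints. Concretely, I would first use bounded generation by root elements: there exist roots $\alpha_1,\dots,\alpha_m\in\Phi$ such that every $A\in G$ can be written as $\prod_{i=1}^m \varepsilon_{\alpha_i}(a_i)$ with $a_i\in R$. Passing to the quotient $G/N$, since $\varepsilon_\phi(a)\in N$ whenever $a\in 2R$, the image of $\varepsilon_{\alpha_i}(a_i)$ depends only on the class $a_i + 2R\in R/2R$. Because $(R:2R)<\infty$, the additive group $R/2R$ is finite, so each factor $\varepsilon_{\alpha_i}(a_i)$ has only finitely many possible images in $G/N$. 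Hence $G/N$ is a finite set — it has at most $|R/2R|^{m}$ elements — provided we know that every element of $G/N$ is represented by such a bounded product. That last point is exactly bounded generation by root elements, so the argument is essentially immediate once one is careful about where $N$ is normal.

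The one subtlety is that $N$ is defined as the \emph{normal closure} $\dl\varepsilon_\phi(a)\mid a\in 2R,\ \phi\in\Phi\dr$ in $G$, so I must check that the coset $\varepsilon_{\alpha_i}(a_i)N$ really depends only on $a_i+2R$. This is clear from additivity of root homomorphisms: if $a_i - a_i'\in 2R$ then $\varepsilon_{\alpha_i}(a_i) = \varepsilon_{\alpha_i}(a_i')\,\varepsilon_{\alpha_i}(a_i - a_i')$ and the second factor lies in $N$, so $\varepsilon_{\alpha_i}(a_i)N = \varepsilon_{\alpha_i}(a_i')N$. Therefore the surjective map $(R/2R)^m \to G/N$, $(\bar a_1,\dots,\bar a_m)\mapsto \prod_i \varepsilon_{\alpha_i}(a_i) N$, is well defined, and since $(R/2R)^m$ is finite (as $(R:2R)<\infty$), the group $G/N$ is finite with $|G/N|\le (R:2R)^m$.

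The main obstacle is really just bookkeeping rather than a deep difficulty: one must invoke the correct form of bounded generation — namely that the \emph{full} group $G(\Phi,R)$, not merely its elementary subgroup, is a product of boundedly many root subgroups, which is precisely the hypothesis of the lemma — and one must make sure the root elements appearing in the fixed bounded-generation expression all have arguments that can be reduced modulo $2R$, which is automatic since every root element $\varepsilon_{\alpha_i}(a_i)$ has argument in $R$. I would write up the proof in two short steps: (i) record that each $\varepsilon_{\alpha}(a)N$ depends only on $a+2R$, using additivity; (ii) conclude via bounded generation that $G/N$ is the image of the finite set $(R/2R)^m$. No commutator identities or level-ideal machinery are needed here. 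I expect this lemma to be invoked later only in the $\Phi=B_2,G_2$ cases precisely to handle the congruence subgroup generated by $2R$-root elements, which is why the hypothesis $(R:2R)<\infty$ is imposed there and nowhere else.
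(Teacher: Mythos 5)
Your proof is correct and is essentially the same argument as the paper's: bounded generation by root elements, finiteness of $R/2R$, and additivity of $\varepsilon_\alpha$ to reduce each argument modulo $2R$. You pass to the quotient $G/N$ directly, which is cleaner than the paper's explicit in-$G$ rearrangement using conjugates by $\varepsilon_{\alpha_1}(x_1)\cdots\varepsilon_{\alpha_{i-1}}(x_{i-1})$ to collect the $2R$-pieces into $N$; both are the same idea. The opening remarks about Burnside-type arguments and nilpotent-by-finite structure are a red herring and play no role in the actual proof you give.
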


\begin{proof}
We are done, if $N$ has finite index in $G.$ The ideal $2R$ has finite index in $R$
so let $X\subset R$ be a finite set of representatives of $2R$ in $R$. The group $G$ is boundedly generated by root elements and so there is a $n:=n(R)$ and roots
 $\alpha_1,\dots,\alpha_n\in\Phi$ such that for all $A\in G$ there are $r_1,\dots,r_n$ with 
\begin{equation}
A=\prod_{i=1}^n\varepsilon_{\alpha_i}(r_i).
\end{equation}   
Next, choose for each $i$ an element $a_i\in R$ and an $x_i\in X$ such that $r_i=2a_i+x_i.$ Note:
\begin{equation*}
A=\prod_{i=1}^n\varepsilon_{\alpha_i}(r_i)=
\varepsilon_{\alpha_1}(2 a_1)\left[\prod_{i=2}^n\varepsilon_{\alpha_i}(2 a_i)^{(\varepsilon_{\alpha_1}(x_1)\cdots\varepsilon_{\alpha_{i-1}}(x_{i-1}))}\right]\cdot
\left[\prod_{i=1}^n\varepsilon_{\alpha_i}(x_i)\right]
\end{equation*}
Yet the first two factors of $A$ are elements of $N$ and there are only finitely many possibilities for the third factor, so the statement of the lemma follows. 
\end{proof} 

We deal with the three different possibilities for $\Phi$ seperately.

\subsection{The higher-rank case and $A_2$}

\begin{Proposition}
\label{mult_bound}
Let $\Phi$ be any irreducible root system that is not $G_2, B_2$ or $A_1$, $R$ a commutative ring with $1$ and let $S$ be a finite subset of $G:=G(\Phi,R)$ with 
$\Pi(S)=\emptyset$ and let $L(\Phi)$ be thrice as the $L(\Phi)$ in Theorem~\ref{fundamental_reduction}. 
Then we have for all $a\in R$ that $\|\varepsilon_{\phi}(a)\|_S\leq\card{S}L(\Phi),$ where $\phi$ is any root in $\Phi$. 
\end{Proposition}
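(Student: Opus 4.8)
The plan is to leverage Theorem~\ref{fundamental_reduction}(1) together with Lemma~\ref{necessary_cond_conj_gen} and the commutator relations of Lemma~\ref{commutator_relations}. First I would observe that by Lemma~\ref{necessary_cond_conj_gen}, the hypothesis $\Pi(S)=\emptyset$ is equivalent to $\sum_{A\in S}l(A)=R$; in particular $1$ can be written as a finite $R$-linear combination $1=\sum_{A\in S}\lambda_A$ with $\lambda_A\in l(A)$. Now for each $A\in S$, Theorem~\ref{fundamental_reduction}(1) supplies an ideal $I(A)\subset\varepsilon(A,\chi,L_0(\Phi))$ (writing $L_0(\Phi)$ for the constant of that theorem, so that $L(\Phi)=3L_0(\Phi)$ here) with $l(A)\subset\sqrt{I(A)}$ for a fixed short root $\chi$.

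The key technical step is to upgrade "$\varepsilon_{\chi}(x)\in B_S(L_0(\Phi))$ for all $x\in I(A)$" to "$\varepsilon_{\chi}(x)\in B_S(\text{const})$ for all $x$ in the sum of the radicals", i.e. to pass from the ideals $I(A)$ to $l(A)$ and then to $R$. The passage through the radical is where the commutator relations enter: if $\varepsilon_{\chi}(x)$ is cheap for all $x\in I(A)$, then using that $\chi$ is short we can find a root $\psi$ with $\chi+\psi$ a root and a short root $\eta$ such that bracketing $\varepsilon_{\chi}(x)$ (for $x\in I(A)$) with a suitable root element produces $\varepsilon_{\eta}(xy)$; iterating and using $l(A)\subset\sqrt{I(A)}$ lets us realise $\varepsilon_{\eta}(z)$ cheaply for all $z\in l(A)$, at the cost of a bounded multiplicative factor (this is the reason for the factor $3$ built into $L(\Phi)$ — one factor for $I(A)$, one for the radical step, one for reassembling). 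Since the word norm is conjugation-invariant and, by the Weyl-group lemma preceding the congruence-subgroup definitions, $\|\varepsilon_{\eta}(z)\|_S$ does not depend on which root $\eta$ of a given length we use (and in rank $\geq 2$ that is not $A_1,B_2,G_2$ every root is conjugate to every other root of the same length, with short and long related via commutators too), we get $\|\varepsilon_{\phi}(z)\|_S\leq L(\Phi)$ for every $z\in l(A)$ and every $\phi\in\Phi$.

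Finally I would assemble the global statement. Given $a\in R$ and the decomposition $1=\sum_{A\in S}\lambda_A$ with $\lambda_A\in l(A)$, write $a=\sum_{A\in S}a\lambda_A$ with $a\lambda_A\in l(A)$ (using that $l(A)$ is an ideal), so by additivity of root elements $\varepsilon_{\phi}(a)=\prod_{A\in S}\varepsilon_{\phi}(a\lambda_A)$, a product of $\card{S}$ terms each of norm at most $L(\Phi)$. Hence $\|\varepsilon_{\phi}(a)\|_S\leq\card{S}L(\Phi)$, as claimed.

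I expect the main obstacle to be the middle step: making the passage from $I(A)$ to its radical $l(A)$ fully rigorous and uniform. One must exhibit, for each short root $\chi$, an explicit sequence of commutator moves that turns control over $\varepsilon_{\chi}(I(A))$ into control over $\varepsilon_{\eta}(l(A))$ with a bound on the number of moves depending only on $\Phi$ — this requires knowing that the relevant products $x_1\cdots x_k$ with $x_i\in I(A)$ generate $l(A)$ as an additive group once $k$ is large enough (which follows from $l(A)\subset\sqrt{I(A)}$ and finite generation of $l(A)$, but the number of generators and the needed power must be absorbed into $L(\Phi)$), and that the choice of roots $\chi,\psi,\eta$ can be made uniformly. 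Everything else — the linear-combination trick and the additivity of root elements — is routine.
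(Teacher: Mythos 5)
The central difficulty you flag — passing from control over $\varepsilon_{\chi}(I(A))$ to control over $\varepsilon_{\chi}(l(A))$ via commutator moves — is real, and your proposed resolution does not close it. If $z\in l(A)\subset\sqrt{I(A)}$, then some power $z^m$ lies in $I(A)$ with $m$ depending on $z$, $A$ and $R$; there is no uniform bound, and commutators only ever take you from $\varepsilon_{\chi}(x)$ to $\varepsilon_{\eta}(xy)$, i.e.\ deeper into the ideal $I(A)$, never ``back out'' to $l(A)$. The remark that ``products $x_1\cdots x_k$ with $x_i\in I(A)$ generate $l(A)$ once $k$ is large'' is backwards: such products lie in $I(A)^k\subset I(A)$, a smaller ideal. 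So your middle step, as written, fails, and the factor $3$ you attribute to the radical step is not what it is for.

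The paper sidesteps this entirely by working at the level of sums of ideals rather than elementwise: set $I:=\sum_{A\in S} I(A)$. Then $\sqrt{I}\supset\sum_{A\in S} l(A)=R$ (by Lemma~\ref{necessary_cond_conj_gen}), and $\sqrt{I}=R$ already forces $I=R$ — no passage from $I(A)$ to $l(A)$ is needed. Writing $a=\sum_A a_A$ with $a_A\in I(A)\subset\varepsilon(A,\chi,L_0)$ (here $L_0$ is the constant of Theorem~\ref{fundamental_reduction}) immediately gives $\|\varepsilon_{\chi}(a)\|_S\leq |S|L_0$ for short $\chi$. The factor $3$ is instead spent on the short-to-long step, which you wave at but do not make precise: if $\phi$ is long and $\psi$ is the short simple root in a $B_2$ subsystem with $\phi$, then $(\varepsilon_{\psi}(1),\varepsilon_{\phi}(a))=\varepsilon_{\psi+\phi}(\pm a)\varepsilon_{2\psi+\phi}(\pm a)$; the left side is $\varepsilon_{\psi}(1)$ times a conjugate of $\varepsilon_{\psi}(-1)$, hence has $\|\cdot\|_S\leq 2|S|L_0$, and peeling off the short root $\varepsilon_{\psi+\phi}(\mp a)$ (cost $\leq |S|L_0$) gives $\|\varepsilon_{2\psi+\phi}(a)\|_S\leq 3|S|L_0$, with $2\psi+\phi$ long. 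Your final reassembly step (decompose $a$, use additivity of $\varepsilon_{\phi}$) is fine, but it should be applied to the decomposition $a=\sum a_A$ with $a_A\in I(A)$, not to $a=\sum a\lambda_A$ with $\lambda_A\in l(A)$.
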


\begin{proof}
Let $S=\{A_1,\dots,A_n\}$ be given and let $I(A_l)$ be the ideal from Theorem~\ref{fundamental_reduction} for all $l=1,\dots,n.$ Next, consider
the ideal $I:=I(A_1)+\cdots+I(A_n).$ As $I(A_l)\subset~\varepsilon(A_l,\phi,L(\Phi))$ holds for all $l$ and all short roots $\phi$ it is immediately clear that 
$\|\varepsilon_{\phi}(a)\|_S\leq\card{S}L(\Phi)$ holds for all $a\in I.$ Thus it suffices to show that $I=R.$ The radical $\sqrt{I}$ contains the ideal 
$l(A_1)+\cdots+l(A_n)$, which is $R$ by assumption. Hence $I=R$ holds.

This proves the claim of the proposition for short roots. If there are long roots in $\Phi$, then each long root $\phi$ is conjugate to a positive, simple long root in a root subsystem of $\Phi$ isomorphic to $B_2.$ Let $\psi$ be the corresponding short, positive, simple root in this root subsystem. Further according to the short root case, we know 
$\|\varepsilon_{\psi}(a)\|_S\leq\card{S}L(\Phi)$ for all $a\in R$ already. So we obtain 
$\|\varepsilon_{\psi}(1)\|_S,\|\varepsilon_{\psi+\phi}(a)\|_S\leq\card{S}L(\Phi)$ for all $a\in R$ and hence as
\begin{equation*}
(\varepsilon_{\psi}(1),\varepsilon_{\phi}(a))=\varepsilon_{\psi+\phi}(\pm a)\varepsilon_{2\psi+\phi}(\pm a),
\end{equation*}
we obtain $\|\varepsilon_{2\psi+\phi}(a)\|_S\leq 3|S|L(\Phi)$ for all $a\in R$. The root $2\psi+\phi$ is long and so we are done.
\end{proof}

We finish this case of Theorem~\ref{exceptional Chevalley}: Lemma~\ref{necessary_cond_conj_gen} implies $\Pi(S)=\emptyset$ and all root groups in $G(\Phi,R)$ are bounded with respect to $\|\cdot\|_S$ with a bound  linear in $|S|.$ However, $G(\Phi,R)$ is also boundedly generated by root elements and hence we are done.

\subsection{The case of ${\rm Sp}_4$}

\begin{Proposition}
\label{mult_bound_B2}
Let $R$ be a commutative ring with $1$ and let $S\subset {\rm Sp}_4(R)$ be a finite set with $\Pi(S)=\emptyset.$ Let $L(B_2)$ be as given in 
Theorem~\ref{fundamental_reduction}. Then we have for all $a\in 2R$ and for all $\phi\in B_2$ that $\|\varepsilon_{\phi}(a)\|_S\leq\card{S}L(B_2)$.
\end{Proposition}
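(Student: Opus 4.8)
The plan is to mirror the proof of Proposition~\ref{mult_bound}, with Theorem~\ref{fundamental_reduction}(1) replaced by Theorem~\ref{fundamental_reduction}(2). Write $S=\{A_1,\dots,A_n\}$, so $n=\card{S}$. Theorem~\ref{fundamental_reduction}(2) provides, for every $\phi\in B_2$ and every $l$, the inclusion $2\,l(A_l)_2\subset\varepsilon(A_l,\phi,L(B_2))$; unwinding the definition of $\varepsilon(\cdot,\cdot,\cdot)$, this means that for each $r\in 2\,l(A_l)_2$ the root element $\varepsilon_\phi(r)$ lies in $B_{A_l}(L(B_2))$, i.e.\ is a product of at most $L(B_2)$ conjugates of $A_l^{\pm1}$. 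Because the $\varepsilon_\phi(\cdot)$ are additive, a sum of such ideals translates into a product of root elements, so it suffices to establish the ideal identity $\sum_{l=1}^n 2\,l(A_l)_2=2R$.

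To prove this identity, first note that $2\,l(A_l)_2$ is the product ideal $(2)\cdot l(A_l)_2$, so $\sum_{l=1}^n 2\,l(A_l)_2=(2)\cdot\bigl(\sum_{l=1}^n l(A_l)_2\bigr)$ and it is enough to show $\sum_{l=1}^n l(A_l)_2=R$. For this I would invoke the containment $l(A)\subset\sqrt{l(A)_2}$ recorded in the remark following Definition~\ref{central_elements_def}: summing over $l$ yields $\sum_{l=1}^n l(A_l)\subset\sqrt{\sum_{l=1}^n l(A_l)_2}$. By Lemma~\ref{necessary_cond_conj_gen} the hypothesis $\Pi(S)=\emptyset$ is equivalent to $\sum_{l=1}^n l(A_l)=R$, so $1$ lies in the radical $\sqrt{\sum_{l=1}^n l(A_l)_2}$ and hence $\sum_{l=1}^n l(A_l)_2=R$. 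Multiplying by $(2)$ gives $\sum_{l=1}^n 2\,l(A_l)_2=2R$, as wanted.

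Finally I would assemble the bound. Given $a\in 2R=\sum_{l=1}^n 2\,l(A_l)_2$, write $a=s_1+\cdots+s_n$ with $s_l\in 2\,l(A_l)_2$ (summands with the same index may be grouped, as each $2\,l(A_l)_2$ is an additive group). By additivity $\varepsilon_\phi(a)=\varepsilon_\phi(s_1)\cdots\varepsilon_\phi(s_n)$, and by Theorem~\ref{fundamental_reduction}(2) each factor $\varepsilon_\phi(s_l)$ is a product of at most $L(B_2)$ conjugates of $A_l^{\pm1}$, hence of at most $L(B_2)$ conjugates of elements of $S\cup S^{-1}$. Concatenating the $n$ factors exhibits $\varepsilon_\phi(a)$ as a product of at most $nL(B_2)=\card{S}L(B_2)$ such conjugates, i.e.\ $\varepsilon_\phi(a)\in B_S(\card{S}L(B_2))$ and $\|\varepsilon_\phi(a)\|_S\le\card{S}L(B_2)$. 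Unlike in Proposition~\ref{mult_bound}, no extra factor for the passage from short to long roots is needed here, since Theorem~\ref{fundamental_reduction}(2) already treats all $\phi\in B_2$ simultaneously.

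The real work — and the genuine obstacle — is entirely contained in Theorem~\ref{fundamental_reduction}(2), whose proof is postponed to Section~\ref{proof_fundamental_prop}. Within the present proposition the only delicate point is the bookkeeping around $l(A)_2$: one recovers only the \emph{squares} $a_{ij}^2$ and $(a_{ii}-a_{jj})^2$, and only after multiplication by $2$, yet the radical argument above still forces the corresponding ideal sum to fill out $2R$. This squaring-and-$2$ phenomenon is precisely why the statement is confined to $a\in 2R$, and it is what later makes the hypothesis $(R:2R)<\infty$ together with Lemma~\ref{congruence_fin} necessary in order to deduce a statement about all of ${\rm Sp}_4(R)$.
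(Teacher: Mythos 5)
Your argument is correct and is essentially the same as the paper's proof: the paper sets $I:=\sum_{l}l(A_l)_2$, proves $I=R$ by the same radical argument ($\sum_l l(A_l)\subset\sqrt{I}$ together with $\Pi(S)=\emptyset$), and then for $a\in 2R$ writes $a=2b$ with $b\in I$ and decomposes $2b$ additively exactly as you do. Your repackaging via $\sum_l 2\,l(A_l)_2=(2)\sum_l l(A_l)_2=2R$ is a harmless notational variant, and your closing remark that no short/long bootstrapping is needed (unlike in Proposition~\ref{mult_bound}) correctly reflects that Theorem~\ref{fundamental_reduction}(2) already covers every $\phi\in B_2$.
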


\begin{proof}
Let $S=\{A_1,\dots,A_k\}$ be given and let $2l(A_l)_2$ be the ideal constructed in Theorem~\ref{fundamental_reduction} for all $l=1,\dots,k.$ Consider
the ideal $I:=l(A_1)_2+\cdots+l(A_k)_2.$ As $2l(A_l)_2\subset\varepsilon(A_l,\phi,L(B_2))$ holds for all $l$ and all $\phi\in B_2$, it is immediately clear that 
$\|\varepsilon_{\phi}(2a)\|_S\leq\card{S}L(B_2)$ holds for all $a\in I.$ Thus it suffices to show that $I=R,$ which is clear because $R=\sum_{A\in S}l(A)$ holds by assumption and by construction of $I$ we have $\sum_{A\in S}l(A)\subset\sqrt{I}$.
\end{proof}

To finish the proof of the theorem, we prove next:

\begin{Proposition}
\label{non-simply-laced-reduction}
Let $R$ be a commutative ring with $1$ such that $(R:2R)<\infty$ and let ${\rm Sp}_4(R)$ be boundedly generated by root elements. 
Also let $S$ be a finite subset of ${\rm Sp}_4(R)$ with $\Pi(S)=\emptyset$ and the property that $S$ maps to a normal generating subset of ${\rm Sp}_4(R)/N$ for $N$ as in Lemma~\ref{congruence_fin} and let $F\subset R$ be finite. Then there is a constant $M(B_2,F,R)$ such that $\|\varepsilon_{\phi}(f)\|_S\leq M(B_2,F)\card{S}$ for all $f\in F$ and all $\phi\in B_2.$ So this holds in particular, if $F$ is a finite set of representatives of $2R$ in $R$. 
\end{Proposition}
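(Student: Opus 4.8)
The plan is to factor a root element $\varepsilon_{\phi}(f)$ as $\varepsilon_{\phi}(f)=h\cdot z$, where $h$ is a bounded product of conjugates of elements of $S\cup S^{-1}$ — bounded because $G/N$ is finite by Lemma~\ref{congruence_fin} — and $z$ lies in $N$, whose elements are controlled by Proposition~\ref{mult_bound_B2}. Throughout set $G:={\rm Sp}_4(R)$ and write $L(B_2)$ for the constant of Proposition~\ref{mult_bound_B2}; all constants produced below will be seen to depend only on $R$ (and on $B_2$), never on $S$ or $F$.

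\textbf{The factor $h$.} Since $G/N$ is finite and, by hypothesis, $S$ maps onto a normal generating set $\bar S$ of $G/N$, the word norm of $\overline{\varepsilon_{\phi}(f)}$ (the image of $\varepsilon_{\phi}(f)$ in $G/N$) with respect to $\bar S$ is at most a constant depending only on $G/N$: in a finite group the sets $B_{\bar S}(k)$ increase strictly until they stabilise to a conjugation-invariant subgroup, which is then all of $G/N$, whence $\|G/N\|_{\bar S}\leq |G/N|-1$. Lifting a shortest expression for $\overline{\varepsilon_{\phi}(f)}$ arbitrarily to $G$ gives $\varepsilon_{\phi}(f)=h\cdot z$ with $z\in N$ and $h$ a product of at most $|G/N|-1$ conjugates of elements of $S\cup S^{-1}$, hence $\|h\|_S\leq |G/N|-1$.

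\textbf{The factor $z\in N$.} The key observation is that $N$ is boundedly generated by $G$-conjugates of the even root elements $\varepsilon_{\psi}(a)$ (with $\psi\in B_2$, $a\in 2R$), with a bound $n+K$ depending only on $R$. To obtain this, fix bounded-generation data $n=n(R)$, $\alpha_1,\dots,\alpha_n\in B_2$ for $G$ and a finite set $X\subseteq R$ of representatives of $2R$ in $R$ (possible since $(R:2R)<\infty$). For $z\in N$ write $z=\prod_{i=1}^n\varepsilon_{\alpha_i}(r_i)$ and $r_i=2a_i+x_i$ with $x_i\in X$; the commutation carried out in the proof of Lemma~\ref{congruence_fin} rewrites $z=C\cdot B$, where $C$ is a product of $n$ conjugates of even root elements and $B=\prod_{i=1}^n\varepsilon_{\alpha_i}(x_i)$ lies in the finite set $Y':=\{\prod_{i=1}^n\varepsilon_{\alpha_i}(y_i)\mid y_i\in X\}$. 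Since $C\in N$ and $z\in N$, we get $B\in Y:=Y'\cap N$; this set is finite and depends only on $R$, and each element of $Y$, lying in $N$, is a product of finitely many $G$-conjugates of even root elements because $N$ is, by definition, generated by such conjugates. Letting $K$ be the maximum over the finite set $Y$ of the number of such conjugates needed, every $z\in N$ is a product of at most $n+K$ conjugates of even root elements.

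\textbf{Assembly and the main obstacle.} By Proposition~\ref{mult_bound_B2} (applicable as $\Pi(S)=\emptyset$) together with conjugation invariance of $\|\cdot\|_S$, any $G$-conjugate of an even root element $\varepsilon_{\psi}(a)$ satisfies $\|\cdot\|_S\leq |S|L(B_2)$; hence $\|z\|_S\leq (n+K)|S|L(B_2)$. Combining with the bound on $h$ and using $|S|\geq 1$ (note $S\neq\emptyset$, since $\Pi(\emptyset)$ contains a maximal ideal of $R$),
\begin{equation*}
\|\varepsilon_{\phi}(f)\|_S\leq \|h\|_S+\|z\|_S\leq (|G/N|-1)+(n+K)|S|L(B_2)\leq M(B_2,F,R)\,|S|,
\end{equation*}
with $M(B_2,F,R):=|G/N|-1+(n+K)L(B_2)$. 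Taking $F$ to be a set of representatives of $2R$ in $R$ gives the final assertion. The principal obstacle is the middle step: a priori an element of $N$ is an arbitrarily long product of conjugates of even root elements, and the trick is to use bounded generation of $G$ by root elements together with $(R:2R)<\infty$ to trap $z$ as a bounded product of such conjugates times an element of the \emph{fixed} finite set $Y\subseteq N$, so that Proposition~\ref{mult_bound_B2} delivers a bound linear in $|S|$; one must also verify that $n$, $K$, $Y$ and $|G/N|$ genuinely depend only on $R$.
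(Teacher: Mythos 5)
Your proof is correct, and it takes a genuinely different route from the paper's. The paper works with an auxiliary finite family $E(G)$ of candidate normal generating sets sitting inside a fixed finite set $X'\subset G$, replaces $S$ by a ``standardized'' proxy $S'\in E(G)$ (Claim~1), and then bounds $\varepsilon_\phi(f)e(T)^{-1}\in N$ uniformly over $T\in E(G)$ (Claim~2). You instead factor $\varepsilon_\phi(f)=h\cdot z$ directly, with $h$ a lift of a length-$\leq|G/N|-1$ word in $G/N$-conjugates of $\bar S$ and $z\in N$, and separately prove that $N$ is boundedly generated by $G$-conjugates of even root elements with a bound $n+K$ depending only on $R$ --- a small strengthening the paper never states. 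Both rest on the same two pillars (finiteness of $G/N$ via Lemma~\ref{congruence_fin} and Proposition~\ref{mult_bound_B2}), but your decomposition avoids the detour through $E(G)$ and the $e(T)$'s, gives a cleaner estimate, and even yields a constant $M$ that is independent of the finite set $F$. The one point worth making explicit --- which you do --- is that $n$, $K$, $Y$ and $|G/N|$ depend only on $R$ and on the fixed bounded-generation data, so they are legitimate constants; and that $S\neq\emptyset$ (from $\Pi(S)=\emptyset$) is needed to absorb the additive constant $|G/N|-1$ into a bound linear in $|S|$.
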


\begin{proof}
Without loss of generality $F$ only contains a single element $f$. Let $\phi\in B_2$ be arbitrary and note that the group $G/N$ is finite.
Hence there are only finitely many possible normally generating sets of $G/N.$ Call this set of normally generating sets $E(G/N)$. Next, we define a finite set of subsets of $G$ that map to elements of $E(G/N).$ 
By bounded generation there are roots $\alpha_1,\dots,\alpha_n$ such that each element $A$ of $G$ can be written as 
\begin{equation*}
A=\prod_{i=1}^n\varepsilon_{\alpha_i}(r_i)
\end{equation*}      
for particular elements $r_1,\dots,r_n\in R$ depending on $A$. The ring $R/2R$ is finite by assumption and let $X$ be a set of representatives of $2R$ in $R$. Then consider the set $X'$ of elements of the form
\begin{equation*}
A=\prod_{i=1}^n\varepsilon_{\alpha_i}(x_i)
\end{equation*} 
with all $x_i\in X$. Note that $X'$ is finite and hence the set $E(G):=\{T\subset X'|\pi(T)\in E(G/N)\}$ for $\pi:G\to G/N$ the canonical map, is also finite.
 
The group $G/N$ is finite and so there is an $M:=M(B_2,R)\in\mathbb{N}$ such that for all $T\in E(G)$ we can find elements
$t_1,\dots,t_M\in T\cup T^{-1}\cup\{1\},g_1,\dots,g_M\in G$ (all of them depending on $T$) with
\begin{equation}
\label{quotient_eq}
\pi(\varepsilon_{\phi}(f))=\pi\left(\prod_{i=1}^M g_it_ig_i^{-1}\right).
\end{equation}      
Fix such a choice of elements $t_i,g_i$ for each one of the finitely many elements $T\in E(G)$ and call 
the corresponding element $\prod_{i=1}^M g_it_ig_i^{-1}=:e(T).$ The set $E(G)$ is finite and hence the set 
\begin{equation*}
\{\varepsilon_{\phi}(f)e(T)^{-1}|T\in E(G)\}\subset N
\end{equation*}
is finite as well. Next, we prove two claims: First, we show that $S$ only differs by some small terms (with respect to $\|\cdot\|_S$) from an element in $E(G).$ 
Secondly, we demonstrate how to obtain the proposition by using the fact that there is a finite number of possible error terms 
$\{\varepsilon_{\phi}(f)e(T)^{-1}|T\in E(G)\}$.

\begin{Claim}
\end{Claim}
Let $A$ be an element of $S.$ Then, as in the proof of Lemma~\ref{congruence_fin}, we can pick elements $x_i\in X$ and $a_i\in R$ such that
\begin{equation*}
A=\varepsilon_{\alpha_1}(2 a_1)\left[\prod_{i=2}^n\varepsilon_{\alpha_i}(2 a_i)^{(\varepsilon_{\alpha_1}(x_1)\cdots\varepsilon_{\alpha_{i-1}}(x_{i-1}))}\right]\cdot
\left[\prod_{i=1}^n\varepsilon_{\alpha_i}(x_i)\right].
\end{equation*}

Set $A':=\prod_{i=1}^n\varepsilon_{\alpha_i}(x_i)$ and observe
\begin{align*}
\|AA'^{-1}\|_S&\leq\|\varepsilon_{\alpha_1}(2a_1)\|_S+\sum_{i=2}^n\|(\varepsilon_{\alpha_1}(x_1)\cdots\varepsilon_{\alpha_{i-1}}(x_{i-1}))\varepsilon_{\alpha_i}(2a_i)(\varepsilon_{\alpha_1}(x_1)\cdots\varepsilon_{\alpha_{i-1}}(x_{i-1}))^{-1}\|_S\\
&=\sum_{i=1}^n\|\varepsilon_{\alpha_i}(2a_i)\|_S.
\end{align*}

Yet Proposition~\ref{mult_bound_B2} implies $\|\varepsilon_{\alpha_i}(2a_i)\|_S\leq\card{S}L(B_2)$ for all $i$ and hence we can conclude that
$\|AA'^{-1}\|_S\leq\card{S}nL(B_2)$ and so 
\begin{equation}
\label{gen_ineq2}
\|A'\|_S\leq 1+\card{S}nL(B_2). 
\end{equation}
Next, $S$ is an element of $E(G)$ by assumption and hence $S':=\{A'|A\in S\}$ is an element of $E(G)$ as well.
In the following, we use the abbreviation $L:=L(B_2).$

\begin{Claim}
\end{Claim}

Each element of $N$ is a product of conjugates of root elements of the form $\varepsilon_{\phi}(2a)$ for $a\in R$ and $\phi\in B_2$. 
Thus there is a maximal number of such factors in regards to the elements in the finite subset $\{\varepsilon_{\phi}(f)e(T)^{-1}|T\in E(G)\}$ of $N$. If we call this maximal number of factors $V:=V(B_2,R)$, then we obtain by applying Proposition~\ref{mult_bound_B2} that $\|\varepsilon_{\phi}(f)e(T)^{-1}\|_S\leq VL\card{S}$
holds for all $T\in E(G)$. This implies further  
\begin{equation}
\label{gen_ineq}
\|\varepsilon_{\phi}(f)\|_S\leq VL\card{S}+\|e(T)\|_S=VL\card{S}+\|\prod_{i=1}^M g_it_ig_i^{-1}\|_S\leq VL\card{S}+M\max\{\|t\|_S|\ t\in T\}.
\end{equation}

Evaluating (\ref{gen_ineq}) for the particular element $S'\in E(G)$ and applying (\ref{gen_ineq2}) yields 
\begin{equation*}
\|\varepsilon_{\phi}(f)\|_S\leq VL\card{S}+M\max\{\|A'\|_S|\ A'\in S'\}\leq VL\card{S}+M(1+\card{S}nL)=(VL+nLM)\card{S}+M.
\end{equation*}
This finishes the proof.
\end{proof}

\begin{remark}
\hfill
\begin{enumerate}
\item{
There is a second possible proof in a special case using \cite[Theorem]{Gal-Kedra-Trost}. This theorem states that if $R$ is a ring of S-algebraic integers (see definition~\ref{S-algebraic_numbers_def}) and $\Phi$ is an irreducible root system that is not $A_1$, then every finite index subgroup of $G(\Phi,R)$ is bounded. 
The group $N$ has finite index in ${\rm Sp}_4(R)$ and hence by \cite[Theorem]{Gal-Kedra-Trost} it is bounded and normally generated by elements of the form $\varepsilon_{\phi}(2a)$ for $\phi\in B_2$ and $a\in R.$ Using this, one can give a different albeit still very similar proof of the proposition.
A similar argument would yield a generalization of Theorem~\ref{exceptional Chevalley} for finite index subgroups of certain split Chevalley groups, but this is work in progress.}
\item{Using Milnor's, Serre's and Bass' solution for the Congruence subgroup problem \cite[Theorem~3.6, Corollary~12.5]{MR244257} in the case of $R$ a ring of 
S-algebraic integers, the normal subgroup $N$ can be identified as the kernel of the reduction homomorphism $\pi_{2R}:{\rm Sp}_4(R)\to{\rm Sp}_4(R/2R)$
and hence $G/N={\rm Sp}_4(R/2R).$}
\end{enumerate}
\end{remark}

Let us finish the proof of Theorem~\ref{exceptional Chevalley} in case of ${\rm Sp}_4(R).$
First, note that $S\subset{\rm Sp}_4(R)$ being a normal generating set, implies both $\Pi(S)=\emptyset$ and $S$ mapping to a normal generating set in ${\rm Sp}_4(R)/N.$
Remember now, that ${\rm Sp}_4(R)$ is assumed to be boundedly generated by root elements. Hence to finish the proof of Theorem~\ref{exceptional Chevalley} for 
$\Phi=B_2$, we only have to prove that all root groups in ${\rm Sp}_4(R)$ are bounded with respect to $\|\cdot\|_S$ with a bound linear in $|S|.$ Let $\phi\in B_2$ be arbitrary. We know already by Proposition~\ref{mult_bound_B2} that the group $\{\varepsilon_{\phi}(2a)|a\in R\}$ is bounded (with respect to $\|\cdot\|_S$) with a bound
 linear in $\card{S}$. Furthermore, by Proposition~\ref{non-simply-laced-reduction}, we also know that for a set of representatives 
$X$ of $2R$ in $R$ the set $\{\varepsilon_{\phi}(x)|x\in X\}$ is bounded with a bound that is linear in $|S|$. 
Next, for each $a\in R$ there is an $x\in X$ and $b\in R$ such that $a=2b+x$ and hence the entire group $\varepsilon_{\phi}$ is bounded with a bound that is linear in $\card{S}.$ 

\subsection{The case of $G_2$}

First, we give the version of Proposition~\ref{mult_bound} for $G_2.$

\begin{Proposition}
\label{G2_mult_bound}
Let $R$ be a commutative ring with $1$ and let $S$ be a finite subset of $G_2(R)$ with $\Pi(S)=\emptyset$ and let $L(G_2)$ be $16$ times the constant $L(G_2)$ from Theorem~\ref{fundamental_reduction}. Then for all $a\in R:$
\begin{enumerate}
\item{$\|\varepsilon_{\phi}(2a)\|_S\leq L(G_2)\card{S}$ holds for all $\phi\in G_2$ short.}
\item{$\|\varepsilon_{\phi}(a)\|_S\leq L(G_2)\card{S}$ holds for all $\phi\in G_2$ long.}
\end{enumerate}
\end{Proposition}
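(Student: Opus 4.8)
The plan is to follow the pattern of Proposition~\ref{mult_bound}, but with the roles of long and short roots interchanged: Theorem~\ref{fundamental_reduction}(3) controls only \emph{one} long root subgroup, so I would first reach all long roots at no extra cost and then bootstrap down to the short ones. Throughout, write $L_0$ for the constant $L(G_2)$ appearing in Theorem~\ref{fundamental_reduction}, so that the constant $L(G_2)$ in the present statement equals $16L_0$, and recall that $\|\cdot\|_S$ is invariant under conjugation and inversion.

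\textit{Part (2): long roots.} Write $S=\{A_1,\dots,A_k\}$ and set $I:=l(A_1)_3+\dots+l(A_k)_3$. By Theorem~\ref{fundamental_reduction}(3) we have $l(A_j)_3\subseteq\varepsilon(A_j,3\alpha+\beta,L_0)$ for each $j$, so by additivity of $\varepsilon_{3\alpha+\beta}$ one gets $\|\varepsilon_{3\alpha+\beta}(r)\|_S\le kL_0=|S|L_0$ for every $r\in I$. Since $\Pi(S)=\emptyset$, Lemma~\ref{necessary_cond_conj_gen} gives $l(A_1)+\dots+l(A_k)=R$, and as this sum lies in $\sqrt I$ we conclude $I=R$; hence $\|\varepsilon_{3\alpha+\beta}(r)\|_S\le|S|L_0$ for all $r\in R$. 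Because the Weyl group of $G_2$ is transitive on long roots, the lemma identifying $\|\varepsilon_\phi(x)\|_S$ with $\|\varepsilon_{w_\gamma(\phi)}(x)\|_S$ (applied repeatedly) yields $\|\varepsilon_\psi(r)\|_S\le|S|L_0$ for every long root $\psi$ and every $r\in R$. This gives (2), with room to spare.

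\textit{Part (1): short roots.} The point is that any commutator whose first entry is a \emph{long} root element has norm at most $2|S|L_0$, whatever the second entry is: $(\varepsilon_\beta(b),\varepsilon_\alpha(a))=\varepsilon_\beta(b)\cdot(\varepsilon_\beta(-b))^{\varepsilon_\alpha(a)}$ is a product of $\varepsilon_\beta(b)$ and a conjugate of $\varepsilon_\beta(-b)$, so $\|\varepsilon_\alpha(a)\|_S$ never intervenes. Feeding this into the first commutator formula of Lemma~\ref{commutator_relations}(4), which with fixed pinning signs $\sigma_1,\dots,\sigma_4$ reads
\[
(\varepsilon_\beta(b),\varepsilon_\alpha(a))=\varepsilon_{\alpha+\beta}(\sigma_1 ab)\,\varepsilon_{2\alpha+\beta}(\sigma_2 a^2b)\,\varepsilon_{3\alpha+\beta}(\sigma_3 a^3b)\,\varepsilon_{3\alpha+2\beta}(\sigma_4 a^3b^2),
\]
and cancelling the two trailing \emph{long} factors using (2), I obtain $\|P_a(b)\|_S\le 4|S|L_0$ for $P_a(b):=\varepsilon_{\alpha+\beta}(\sigma_1 ab)\,\varepsilon_{2\alpha+\beta}(\sigma_2 a^2b)$ and all $a,b\in R$. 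Taking $a=1$ and $a=-1$ (so that the two $\varepsilon_{2\alpha+\beta}$-arguments $\sigma_2 b$ coincide), in the product $P_1(b)P_{-1}(b)^{-1}$ the two middle $\varepsilon_{2\alpha+\beta}$-factors cancel and one is left with exactly $\varepsilon_{\alpha+\beta}(2\sigma_1 b)$; hence $\|\varepsilon_{\alpha+\beta}(2b)\|_S\le 8|S|L_0$ for all $b\in R$. Since the short roots of $G_2$ form a single Weyl orbit, the same Weyl-conjugation lemma upgrades this to $\|\varepsilon_\phi(2b)\|_S\le 8|S|L_0\le L(G_2)|S|$ for every short root $\phi$, which is (1).

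The bootstrap in the last paragraph is the only genuinely delicate step, and it is forced: no sum of two long roots of $G_2$ is a short root, so a short root subgroup cannot be reached from the long ones by a single commutator, and one must exploit the \emph{two}-term product $P_a(b)$ together with the cancellation of its $\varepsilon_{2\alpha+\beta}$-factor --- which is also precisely why only $\varepsilon_\phi(2a)$, and not $\varepsilon_\phi(a)$, appears in the statement. Sign ambiguities from the pinning (cf.\ the remark after Lemma~\ref{commutator_relations}) are immaterial since $\|\cdot\|_S$ is inversion-invariant, and the case $2=0$ in $R$ is trivial. The constant $16$ is deliberately generous: all that is needed is a fixed bound on the number of commutator and conjugate factors involved, and $8$ already suffices.
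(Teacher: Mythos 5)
Your proposal is correct. Part (2) is essentially identical to the paper's argument: sum the ideals $l(A_j)_3$, use Theorem~\ref{fundamental_reduction}(3), observe that $\Pi(S)=\emptyset$ forces the sum to be $R$, and spread out over all long roots by Weyl conjugation. For Part (1), however, your bootstrap differs from the paper's. The paper invokes Proposition~\ref{G2_ideals}(1b), whose proof uses the same opening step as yours (writing a commutator with long first entry as $\varepsilon_\beta(b)\cdot(\varepsilon_\beta(-b))^{\varepsilon_\alpha(a)}$ and stripping off the two trailing long factors from Lemma~\ref{commutator_relations}(4)) but then takes a \emph{second} commutator of the remaining product $\varepsilon_{\alpha+\beta}(\cdot)\varepsilon_{2\alpha+\beta}(\cdot)$ with $\varepsilon_\alpha(1)$ to isolate $\varepsilon_{2\alpha+\beta}(2x\lambda)$, which costs an extra doubling and two more long-root cancellations, landing on $16N$. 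You instead exploit the parity of the $\varepsilon_{2\alpha+\beta}$-argument ($a^2b$) versus the oddness of the $\varepsilon_{\alpha+\beta}$-argument ($ab$) and cancel by taking the product $P_1(b)P_{-1}(b)^{-1}$; the two $\varepsilon_{2\alpha+\beta}$-factors annihilate each other by additivity and you are left with $\varepsilon_{\alpha+\beta}(2\sigma_1 b)$ at total cost $8|S|L_0$. This is a genuine shortcut: it avoids the second Chevalley commutator entirely, gives a sharper constant ($8$ in place of the paper's $16$), and makes transparent why the factor $2$ on the argument of short-root elements is unavoidable (it comes from adding the two copies of $\varepsilon_{\alpha+\beta}(\sigma_1 b)$ once the $\varepsilon_{2\alpha+\beta}$-factor is cancelled). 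Your remarks about pinning signs being harmless (inversion-invariance of $\|\cdot\|_S$), about the Weyl group acting transitively on short roots, and about why a single commutator of long roots cannot produce a short root, are all accurate and address exactly the points one needs to worry about.
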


\begin{proof}
Note that by Theorem~\ref{fundamental_reduction} there is a constant $L(G_2)$ such that for the ideal $I:=\sum_{A\in S}l(A)_3$, one has 
$I\subset\varepsilon(S,\chi,L(G_2)\card{S}).$ As before $\Pi(S)=\emptyset$ implies $I=R.$ This yields the claim of the proposition for long roots. To get the claim for short roots use part (1b) of Proposition~\ref{G2_ideals} and replace $L(G_2)$ by $16L(G_2).$ 
\end{proof}

Next, the analogue of Proposition~\ref{non-simply-laced-reduction}:

\begin{Proposition}
\label{non-simply-laced-reduction_G2}
Let $R$ be a commutative ring with $1$ such that $(R:2R)<\infty$ and let $G_2(R)$ be boundedly generated by root elements. 
Also let $S$ be a finite subset of $G_2(R)$ with $\Pi(S)=\emptyset$ and the property that $S$ maps to a normal generating subset of $G_2(R)/N$ for $N$ as in 
Lemma~\ref{congruence_fin} and let $F\subset R$ be finite. Then there is a constant $M(G_2,F,R)$ such that $\|\varepsilon_{\phi}(f)\|_S\leq M(G_2,F)\card{S}$ for all 
$f\in F$ and all $\phi\in B_2.$ So this holds in particular, if $F$ is a finite set of representatives of $2R$ in $R$. 
\end{Proposition}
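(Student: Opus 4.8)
The plan is to follow the exact same strategy as the proof of Proposition~\ref{non-simply-laced-reduction}, simply transporting every ingredient from the $B_2$-setting to the $G_2$-setting, since all the structural inputs are in place. Concretely, first I would reduce without loss of generality to the case where $F=\{f\}$ is a single element, and fix an arbitrary root $\phi\in G_2$ (the statement quantifies over $\phi$, and the proof of the $B_2$-analogue never used anything special about which root is chosen). The key initial observation is that $G/N$ is finite: this is exactly Lemma~\ref{congruence_fin}, whose hypotheses ($(R:2R)<\infty$ and bounded generation by root elements) are assumed here. Hence there are only finitely many normally generating subsets of $G/N$, call this finite collection $E(G/N)$, and using bounded generation by root elements together with a fixed finite set $X$ of representatives of $2R$ in $R$, I would form the finite set $X'$ of all products $\prod_{i=1}^n\varepsilon_{\alpha_i}(x_i)$ with $x_i\in X$, and then the finite set $E(G):=\{T\subset X'\mid \pi(T)\in E(G/N)\}$ where $\pi:G\to G/N$ is the quotient map.

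Next, since $G/N$ is finite, there is a constant $M:=M(G_2,R)$ such that every $T\in E(G)$ admits elements $t_1,\dots,t_M\in T\cup T^{-1}\cup\{1\}$ and $g_1,\dots,g_M\in G$ with $\pi(\varepsilon_{\phi}(f))=\pi\bigl(\prod_{i=1}^M g_it_ig_i^{-1}\bigr)$; fixing one such choice for each of the finitely many $T$ and writing $e(T):=\prod_{i=1}^M g_it_ig_i^{-1}$, the finitely many ``error terms'' $\varepsilon_{\phi}(f)e(T)^{-1}$ all lie in $N$. Now I would prove the two claims exactly as before. For the first claim: given $A\in S$, write $A=\prod_{i=1}^n\varepsilon_{\alpha_i}(r_i)$ by bounded generation, split each $r_i=2a_i+x_i$ with $x_i\in X$, and push the $2R$-pieces to the left to get
\begin{equation*}
A=\varepsilon_{\alpha_1}(2a_1)\left[\prod_{i=2}^n\varepsilon_{\alpha_i}(2a_i)^{(\varepsilon_{\alpha_1}(x_1)\cdots\varepsilon_{\alpha_{i-1}}(x_{i-1}))}\right]\cdot\left[\prod_{i=1}^n\varepsilon_{\alpha_i}(x_i)\right];
\end{equation*}
setting $A':=\prod_{i=1}^n\varepsilon_{\alpha_i}(x_i)\in X'$ and using conjugation-invariance of $\|\cdot\|_S$, one gets $\|AA'^{-1}\|_S\le\sum_{i=1}^n\|\varepsilon_{\alpha_i}(2a_i)\|_S$. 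Here is the one place where the $G_2$-proof genuinely differs from the $B_2$-proof: the bound $\|\varepsilon_{\alpha_i}(2a_i)\|_S\le L(G_2)\card{S}$ must be supplied by Proposition~\ref{G2_mult_bound} rather than by Proposition~\ref{mult_bound_B2}, and one must note that in $G_2$ the short roots take $2R$-arguments (part (1)) while the long roots take arbitrary arguments (part (2)) — in either case $2a_i\in 2R$ is fine, so $\|\varepsilon_{\alpha_i}(2a_i)\|_S\le L(G_2)\card{S}$ holds for every root $\alpha_i$. Hence $\|A'\|_S\le 1+\card{S}nL(G_2)$, and since $S\in E(G)$ by hypothesis, $S':=\{A'\mid A\in S\}\in E(G)$ as well.

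For the second claim: every element of $N$ is a product of conjugates of root elements $\varepsilon_{\psi}(2a)$, so there is a maximal number $V:=V(G_2,R)$ of such factors needed to express any of the finitely many elements $\varepsilon_{\phi}(f)e(T)^{-1}$, and Proposition~\ref{G2_mult_bound} gives $\|\varepsilon_{\phi}(f)e(T)^{-1}\|_S\le VL(G_2)\card{S}$ for all $T\in E(G)$ (again using that $\varepsilon_{\psi}(2a)$ has short or long root index, both handled by Proposition~\ref{G2_mult_bound}). Therefore
\begin{equation*}
\|\varepsilon_{\phi}(f)\|_S\le VL(G_2)\card{S}+\|e(T)\|_S\le VL(G_2)\card{S}+M\max\{\|t\|_S\mid t\in T\}
\end{equation*}
for every $T\in E(G)$; evaluating at $T=S'$ and plugging in $\|A'\|_S\le 1+\card{S}nL(G_2)$ yields $\|\varepsilon_{\phi}(f)\|_S\le(VL(G_2)+nL(G_2)M)\card{S}+M\le M(G_2,F,R)\card{S}$ after absorbing the additive constant, which completes the proof. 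I do not expect any real obstacle: the only substantive point requiring care is making sure every root element with a $2R$-argument is covered by the appropriate part of Proposition~\ref{G2_mult_bound} (short versus long), and that the constant $M(G_2,R)$ produced by finiteness of $G/N$ is genuinely uniform over the finite set $E(G)$ — both of which are routine given the results already established.
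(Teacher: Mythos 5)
Your proposal is correct and follows exactly the route the paper indicates: the paper omits the proof of this proposition, stating only that it is "essentially the same" as the proof of Proposition~\ref{non-simply-laced-reduction}, and your transcription carries that $B_2$ argument over to $G_2$ faithfully, correctly substituting Proposition~\ref{G2_mult_bound} for Proposition~\ref{mult_bound_B2} and rightly flagging the one point that needs a moment's thought (that $2a_i\in 2R$ is covered whether $\alpha_i$ is short or long, by the two parts of Proposition~\ref{G2_mult_bound}). You also silently and correctly repair the typo in the statement ($\phi\in B_2$ should read $\phi\in G_2$).
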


The proof is essentially the same as the one of Proposition~\ref{non-simply-laced-reduction}, so we are going to omit it. Also completing the proof of 
Theorem~\ref{exceptional Chevalley} is very similar to ${\rm Sp}_4(R)$. The only difference is that we only have to show the boundedness of the root groups for the short roots, because it follows for long roots from Proposition~\ref{G2_mult_bound} already.

Thus, save for the proof of Theorem~\ref{fundamental_reduction}, we have proven Theorem~\ref{exceptional Chevalley}.
We also want to note the following corollary of the proof:

\begin{Corollary}
\label{sufficient_cond_gen_set}
Let $R$ be a commutative ring with $1$, $\Phi$ irreducible and of rank at least $2$ and assume $G(\Phi,R)=E(\Phi,R)$. Then a subset $S$ of $G$ normally generates $G$ precisely if 
\begin{enumerate}
\item{one has $\Pi(S)=\emptyset$ in case $\Phi\neq B_2,G_2$}
\item{one has $\Pi(S)=\emptyset$ and $S$ maps to a normally generating set of $G/N$ for $N$ as in Lemma~\ref{congruence_fin} in case $\Phi=B_2$ or $G_2.$}
\end{enumerate}
\end{Corollary}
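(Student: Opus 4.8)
The plan is to observe that both directions essentially repackage work already done. For the ``only if'' direction, suppose $S$ normally generates $G=E(\Phi,R)$. Then Lemma~\ref{necessary_cond_conj_gen} immediately gives $\sum_{A\in S}l(A)=R$, which by the equivalence in that same lemma is exactly $\Pi(S)=\emptyset$; this handles the necessity of the first condition in all cases. When $\Phi=B_2$ or $G_2$, the quotient map $\pi\colon G\to G/N$ is surjective (as $N\trianglelefteq G$), and the image of a normally generating set under a surjective homomorphism is again normally generating, so $\pi(S)$ normally generates $G/N$. This gives the necessity of the second condition.

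For the ``if'' direction, the key point is that the proof of Theorem~\ref{exceptional Chevalley} never used that $S$ normally generates $G$ as a standalone hypothesis---it only used the consequences $\Pi(S)=\emptyset$ and (in the $B_2,G_2$ cases) that $S$ maps to a normally generating set of $G/N$. More precisely: in the case $\Phi\neq B_2,G_2$, Proposition~\ref{mult_bound} takes as hypothesis exactly $\Pi(S)=\emptyset$ and concludes $\|\varepsilon_\phi(a)\|_S\leq\card{S}L(\Phi)<\infty$ for every root $\phi$ and every $a\in R$; in particular every root element lies in $\dl S\dr$. Since $G=E(\Phi,R)$ is generated by root elements, $\dl S\dr=G$. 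In the cases $\Phi=B_2$ and $\Phi=G_2$, one combines Propositions~\ref{mult_bound_B2} and~\ref{non-simply-laced-reduction} (respectively Propositions~\ref{G2_mult_bound} and~\ref{non-simply-laced-reduction_G2}) exactly as in the final paragraphs completing the proof of Theorem~\ref{exceptional Chevalley}: writing $a=2b+x$ with $x$ in a finite set $X$ of representatives of $2R$, one gets $\|\varepsilon_\phi(a)\|_S<\infty$ for all $\phi\in\Phi$ and $a\in R$, hence again every root element lies in $\dl S\dr$, and $G=E(\Phi,R)=\dl S\dr$.

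I would present this as: ``The necessity of the conditions is Lemma~\ref{necessary_cond_conj_gen} together with the fact that homomorphic images of normally generating sets are normally generating. For sufficiency, inspect the proof of Theorem~\ref{exceptional Chevalley}: the stated conditions on $S$ are precisely what is needed to conclude $\|\varepsilon_\phi(a)\|_S<\infty$ for all $\phi\in\Phi$ and $a\in R$, whence $\dl S\dr\supseteq E(\Phi,R)=G$.'' The only mild subtlety---and the one place I would be careful---is making sure that in the $B_2$ and $G_2$ cases Proposition~\ref{non-simply-laced-reduction} (resp.\ its $G_2$ analogue) really is applicable under the bare hypothesis that $\pi(S)$ normally generates $G/N$, rather than under the stronger hypothesis that $S$ normally generates $G$; but an inspection of that proposition's statement shows its hypotheses are exactly $\Pi(S)=\emptyset$ together with $S$ mapping to a normally generating set of $G/N$, so there is nothing extra to check. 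No genuine obstacle arises; the corollary is a bookkeeping consequence of the apparatus already assembled.
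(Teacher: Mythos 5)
The ``only if'' direction is handled correctly, and the $\Phi\neq B_2,G_2$ case of the ``if'' direction is fine (Proposition~\ref{mult_bound} needs only $\Pi(S)=\emptyset$; if $S$ is infinite one just passes to a finite $S_0\subset S$ with $\sum_{A\in S_0}l(A)=R$). The genuine gap is in the $B_2,G_2$ case. You invoke Proposition~\ref{non-simply-laced-reduction} (and its $G_2$ analogue) and assert that ``an inspection of that proposition's statement shows its hypotheses are exactly $\Pi(S)=\emptyset$ together with $S$ mapping to a normally generating set of $G/N$,'' but that is not what the proposition says: it also assumes $(R:2R)<\infty$ and that ${\rm Sp}_4(R)$ (resp.\ $G_2(R)$) is boundedly generated by root elements. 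Corollary~\ref{sufficient_cond_gen_set} carries neither assumption --- its only standing hypothesis is $G(\Phi,R)=E(\Phi,R)$ --- and the paper explicitly remarks that ``we do not need the assumption $|R/2R|<+\infty$ here.'' Your step ``writing $a=2b+x$ with $x$ in a finite set $X$ of representatives of $2R$'' likewise has no meaning when $R/2R$ is infinite. So your route proves a strictly weaker statement.

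The fix is to drop Proposition~\ref{non-simply-laced-reduction} entirely: Proposition~\ref{mult_bound_B2} (resp.\ Proposition~\ref{G2_mult_bound}) already shows, from $\Pi(S)=\emptyset$ alone, that every $\varepsilon_\phi(2a)$ lies in $\dl S\dr$, hence $N=\dl\varepsilon_\phi(2a)\mid a\in R,\phi\in\Phi\dr\subset\dl S\dr$. (For $G_2$ one first gets long-root elements in $\dl S\dr$, hence all $\varepsilon_\phi(2a)$ by Proposition~\ref{G2_mult_bound}.) Now $\pi(S)$ normally generating $G/N$ means $\dl S\dr\cdot N=G$, and since $N\subset\dl S\dr$ this collapses to $\dl S\dr=G$. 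No bounded generation, no finiteness of $R/2R$. This is the one place where the corollary is not a mechanical corollary of the quantitative theorem, and it is precisely what the remark following the corollary is pointing out.
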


\begin{remark}
\hfill
\begin{enumerate}
\item{
The case $\Phi\neq B_2, G_2$ is a consequence of a result by Abe \cite[Theorem~1,2,3,4]{MR991973}.}
\item{In case $\Phi=B_2$ or $G_2,$ the crucial point is that $\Pi(S)=\emptyset$ implies $N\subset\dl S\dr$. Hence it is obvious, that if $S$ maps to a normally generating set of $G/N$ for $G=G_2(R)$ or $Sp_4(R)$, then $S$ must normally generate $G.$ This is why we do not need the assumption $|R/2R|<+\infty$ here.}
\end{enumerate}
\end{remark}

The difference between ${\rm Sp}_4, G_2$ and the other cases is not merely an artifact of our proof strategy, as seen by studying the differences regarding normal generation between ${\rm Sp}_4, G_2$ and the other cases more in depth in Section~\ref{section_lower_bounds}. 

\section{Proof of Theorem~\ref{fundamental_reduction}}
\label{proof_fundamental_prop}

The main idea is that the claims of Theorem~\ref{fundamental_reduction} are first order statements and to use results about normal subgroups of split Chevalley groups and G\"odel's compactness theorem. We distinguish the three different cases of possible root systems $\Phi$ again.

\subsection{Level ideals for higher rank split Chevalley groups and ${\rm SL}_3$}\label{Centralization_higher}

This is the largest case. The main tool in this case is the following theorem by Abe.

\begin{Theorem}\cite[Theorem~1,2,3,4]{MR991973}
\label{Abe}
Let $\Phi$ be an irreducible root system that is not $A_1,B_2,G_2$ and let $R$ be a commutative ring with $1$. Then for each subgroup
$H\subset G(\Phi,R)$ normalized by the group $E(\Phi,R)$, there is an ideal $J\subset R$ and an additive subgroup of $L$ of $J$ 
such that $\bar{E}(J,L)\subset H\subset E^*(J,L).$ 
\end{Theorem}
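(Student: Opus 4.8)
The plan is to follow the classical \emph{sandwich} strategy for subgroups normalized by the elementary group, adapted to the Chevalley group setting. First I would attach to $H$ the \emph{level ideal} $J:=\sum_{h\in H}l(h)$, where $l(h)$ is the ideal of Definition~\ref{central_elements_def}(1) generated by the off-diagonal entries and the across-block diagonal differences of $h$ in the chosen faithful representation, and I would let $L\subseteq J$ be the additive subgroup generated by the coordinates of elements of $H$ along the \emph{long} root directions together with all the parameters that the Chevalley commutator formulas of Lemma~\ref{commutator_relations}(3)--(4) force into a long root subgroup (for simply-laced $\Phi$ one simply gets $L=J$). With these definitions the statement splits into the two inclusions $\bar E(J,L)\subseteq H$ and $H\subseteq E^*(J,L)$.

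For the upper inclusion $H\subseteq E^*(J,L)$, I would take $h\in H$ and an arbitrary generator $\varepsilon_\alpha(t)$ of $E(\Phi,R)$ and analyse the commutator $(\varepsilon_\alpha(t),h)\in H$ inside $GL_n(R)$. Reducing modulo $J$, the image $\pi_J(h)$ is block-scalar, hence central in $G(\Phi,R/J)$ by Lemma~\ref{central_elements}, so $\pi_J\bigl((\varepsilon_\alpha(t),h)\bigr)=1$ and every such commutator lies in $\ker\pi_J$. Refining this, one expands the matrix entries of $(\varepsilon_\alpha(t),h)$ using the explicit polynomial action of $\varepsilon_\alpha(t)$ on the lattice $V_{\mathbb{Z}}$ and checks that the result can be written as a product of root elements $\varepsilon_\gamma(x)$ with $x\in J$ for $\gamma$ short and $x\in L$ for $\gamma$ long, i.e.\ that it lies in $\bar E(J,L)$. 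This last step is essentially a Gauss-type decomposition performed one root subgroup at a time, peeling off factors by means of the relations in Lemma~\ref{commutator_relations}, and it is where the bulk of the computation sits.

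For the lower inclusion $\bar E(J,L)\subseteq H$, I would show that every generator of $J$ already occurs as the parameter of a root element lying in $H$. Given $h\in H$ with an off-diagonal entry $a=h_{ij}$ (or a diagonal difference $a=h_{ii}-h_{jj}$), I would form commutators of $h$ with suitable elements $\varepsilon_\beta(1)$ and $w_\phi$ and then project onto a single root subgroup sitting inside an $A_2$-subsystem of $\Phi$ — this is exactly where the hypotheses that $\Phi$ has rank at least $2$ and $\Phi\neq B_2,G_2$ are used, since one needs enough room in the commutator formulas to isolate a single root subgroup — to obtain $\varepsilon_\gamma(\pm a)\in H$, with $\gamma$ long precisely when $a$ contributes to $L$. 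Using additivity $\varepsilon_\gamma(x+y)=\varepsilon_\gamma(x)\varepsilon_\gamma(y)$ and conjugation by Weyl elements (which permute root subgroups of a given length, cf.\ \cite[Lemma~20(b), Chapter~3]{MR3616493}), these root elements generate $E(J,L)$, and since $H$ is normalized by $E(\Phi,R)$ its normal closure $\bar E(J,L)$ is still contained in $H$.

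The main obstacle is the careful bookkeeping of \emph{which} ideal or additive subgroup each parameter belongs to: in the non-simply-laced types $B_n$, $C_n$ $(n\geq 3)$ and $F_4$ the commutator formulas of Lemma~\ref{commutator_relations}(3)--(4) produce squared parameters, so $L$ genuinely differs from $J$ and one must verify that the two inclusions meet \emph{exactly} at $\bar E(J,L)$ rather than at some larger or smaller congruence-type subgroup. A secondary difficulty is carrying out the commutator expansions uniformly in $\Phi$ from the structure of the representation $\rho$ and the Chevalley relations, instead of by ad hoc matrix computations case by case. This is precisely the technical content that makes Abe's original argument in \cite{MR991973} lengthy, and for the purposes of the present paper we simply invoke it.
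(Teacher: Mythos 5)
The paper does not prove this statement; Theorem~\ref{Abe} is imported as-is from Abe \cite{MR991973} (his Theorems~1--4) and used as a black box, so there is no in-paper argument to compare against. Your sketch captures the correct broad shape of Abe's level-theory argument --- attach the level pair $(J,L)$ to $H$, derive $H\subseteq E^*(J,L)$ from the centrality of $\pi_J(h)$ via Lemma~\ref{central_elements}, and derive $\bar E(J,L)\subseteq H$ by extracting root elements from commutators of $H$ with $E(\Phi,R)$ --- and you correctly defer the technical details to Abe at the end, which is exactly what the present paper also does.

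Two cautions about the outline itself. First, the phrase ``project onto a single root subgroup'' in the lower inclusion is where the real work lives and is not a literal projection: $G(\Phi,R)$ is not a direct product of root subgroups, so producing a bare $\varepsilon_\gamma(\pm a)\in H$ from a matrix entry $a$ of $h$ requires a genuine inductive peeling argument (parabolic reductions, carefully staged commutators, Weyl conjugations), which is precisely where the hypotheses ${\rm rank}(\Phi)\geq 2$ and $\Phi\neq B_2,G_2$ are consumed and where the ``extra'' squared and tripled parameters in Lemma~\ref{commutator_relations}(3)--(4) force the admissibility constraints on $L$. Second, it is worth noting for context that the paper itself never performs such an extraction: Proposition~\ref{higher_rank_centralization} feeds Abe's sandwich $\bar E(J,L)\subseteq H\subseteq E^*(J,L)$ into a G\"odel compactness argument to obtain a bound $L(\Phi)$ that is uniform over all commutative rings, trading explicit control of constants for a pure existence statement. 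So Abe's theorem is cited precisely so that this first-order finiteness trick can be run on top of it, not because anyone redoes the computation.
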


\begin{remark}
\hfill
\begin{enumerate}
\item{
The paper \cite{MR843808} by Vaserstein deals with the simply laced case and with the multiple laced case under some assumptions. The papers Abe, Suzuki \cite{MR439947} and Abe \cite{MR258837} deal with local rings.}
\item{Theorem~\ref{Abe} is enough to prove strong boundedness of $G(\Phi,R)$ for commutative rings with $1$ and $\Phi\neq A_1,B_2,G_2$ with $G(\Phi,R)$ boundedly generated by root elements. However, this would not yield any linear bounds on $\Delta_k$ and is very similar to our argument, so we do not give more details.}
\end{enumerate}
\end{remark}

Next, we need the following lemma about root elements:

\begin{Lemma}
\label{A2_parts}
Let $\Phi$ an irreducible root system that is not $A_1,B_2$ or $G_2$, $R$ a commutative ring with $1$ and $A\in G(\Phi,R)$ be given and assume that $\lambda\in\varepsilon(A,\chi,N)$ for some $N\in\mathbb{N}$ and $\chi$ a short root. Then 
\begin{equation*}
\lambda R\subset\varepsilon(A,\chi,8N)
\end{equation*}
holds. 
\end{Lemma}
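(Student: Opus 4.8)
The plan is to combine three elementary facts. First, for a single element $A$ the set $B_A(k)$ is stable under conjugation and under inversion; in particular a conjugate of an element of $B_A(k)$ again lies in $B_A(k)$, and $\mu\in\varepsilon(A,\chi,k)$ forces $-\mu\in\varepsilon(A,\chi,k)$. Second, conjugating a root element $\varepsilon_\phi(x)$ by the Weyl element $w_\psi=w_\psi(1)\in G(\Phi,R)$ produces $\varepsilon_{w_\psi(\phi)}(\pm x)$ (see \cite[Lemma~20(b), Chapter~3, p.~23]{MR3616493}); since the Weyl group of an irreducible root system acts transitively on the set of short roots, the hypothesis $\lambda\in\varepsilon(A,\chi,N)$ upgrades at once to $\varepsilon_{\chi'}(\lambda)\in B_A(N)$ for \emph{every} short root $\chi'$. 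Third, the commutator relations of Lemma~\ref{commutator_relations}.

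First I would treat every admissible $\Phi$ except $B_n$ with $n\geq 3$. In each of those systems one can exhibit, for the prescribed short root $\chi$, two short roots $\alpha,\beta$ with $\chi=\alpha+\beta$ and $\{\pm\alpha,\pm\beta,\pm(\alpha+\beta)\}$ of type $A_2$: for simply laced $\Phi$ this is immediate, and for $C_n$ ($n\geq 3$) and $F_4$ one writes a short root explicitly as a sum of two short roots spanning an $A_2$-subsystem. Fixing an arbitrary $r\in R$, Lemma~\ref{commutator_relations}(2) gives
\begin{equation*}
\varepsilon_\chi(\pm\lambda r)=(\varepsilon_\beta(r),\varepsilon_\alpha(\lambda))=\varepsilon_\alpha(\lambda)^{\varepsilon_\beta(r)}\cdot\varepsilon_\alpha(-\lambda),
\end{equation*}
and both factors on the right lie in $B_A(N)$ by the previous step (the first being a conjugate of $\varepsilon_\alpha(\lambda)$, the second its inverse), so $\pm\lambda r\in\varepsilon(A,\chi,2N)$. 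Dropping the sign and letting $r$ vary yields $\lambda R\subset\varepsilon(A,\chi,2N)\subset\varepsilon(A,\chi,8N)$; the generous constant is there to also accommodate the remaining case.

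The case $\Phi=B_n$, $n\geq 3$, is the real obstacle, and it is here that the constant $8$ is actually needed. The short roots $\pm e_1,\dots,\pm e_n$ span a subsystem of type $A_1^n$, so no short root is a sum of two short roots and one is forced to use a $B_2$-subsystem, e.g.\ $\alpha=e_2$ (short), $\beta=e_1-e_2$ (long), $\chi=e_1=\alpha+\beta$, $2\alpha+\beta=e_1+e_2$. The relevant commutator in Lemma~\ref{commutator_relations}(3) then produces, besides $\varepsilon_{e_1}(\pm\lambda r)$, an unavoidable companion term $\varepsilon_{e_1+e_2}(\pm\lambda^2 r)$, and since $2$ need not be a unit one cannot simply solve for $\varepsilon_{e_1}(\pm\lambda r)$. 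The intended remedy is to cancel the long-root contribution by playing off against one another the analogous identities coming from the further indices $e_3,\dots,e_n$ (available precisely because $n\geq 3$) together with their Weyl conjugates: the long roots $e_1\pm e_j$ so produced commute with $e_1$ and pairwise, so a bounded number of products and conjugations should eliminate the companion terms and isolate $\varepsilon_{e_1}(\pm\lambda r)$ in $B_A(8N)$, after which a final Weyl conjugation returns this to the originally prescribed short root $\chi$. The delicate point — which I expect to be the crux of the argument, and the reason the clean bound $2N$ must be relaxed to $8N$ — is the sign bookkeeping and the precise choice of moves that make the long-root terms vanish in this multiply-laced situation.
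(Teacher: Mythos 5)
Your argument for all $\Phi$ except $B_n$ ($n\geq 3$) is correct and coincides with the paper's: exhibit a short-root $A_2$, transport $\lambda$ to a simple short root via the Weyl group, and take a single commutator. The constant $2N$ you get there matches the paper.

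The $B_n$ case, however, contains a genuine gap. You correctly locate the difficulty — the commutator
\begin{equation*}
(\varepsilon_{e_1-e_2}(r),\varepsilon_{e_2}(\lambda))=\varepsilon_{e_1}(\pm\lambda r)\,\varepsilon_{e_1+e_2}(\pm\lambda^2 r)
\end{equation*}
carries an unremovable long-root companion and $2$ need not be invertible — but the proposed fix does not work. Taking "analogous identities" for $j=3,\dots,n$ and multiplying them (or their Weyl conjugates) together does not cancel anything: the companion terms sit at \emph{distinct} long roots $e_1+e_j$, so they accumulate rather than cancel, while the short-root contributions pile up to $\varepsilon_{e_1}(\pm(n-1)\lambda r)$ or similar. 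Even the most favourable variant — playing $\lambda$ against $-\lambda$ so that the $\lambda^2$-terms agree in sign — only isolates $\varepsilon_{e_1}(\pm 2\lambda r)$, which is exactly the obstruction you were trying to avoid. You flag the "precise choice of moves" as the crux, and that is precisely where the sketch breaks down.

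The paper's proof for $B_n$ proceeds along a different route, hinging on two points your sketch does not use. First, the \emph{other} factor of the commutator, $\varepsilon_{\beta+\chi}(\lambda)$ (in your coordinates $\varepsilon_{e_1}(\lambda)$), sits at a short root, hence by Weyl transitivity on short roots it already lies in $B_A(N)$; specializing $r=1$ and multiplying by $\varepsilon_{\beta+\chi}(-\lambda)$ therefore isolates the long-root element $\varepsilon_{\beta+2\chi}(\lambda^2)\in B_A(3N)$ — one solves for the \emph{companion}, not for the short-root part. Second, the hypothesis $n\geq 3$ is invoked not to supply more companion identities but to supply a long-root $A_2$ subsystem (e.g.\ $\{e_1-e_2,e_2-e_3,e_1-e_3\}$); applying the $A_2$-argument to $\varepsilon_{\beta}(\lambda^2)\in B_A(3N)$ then yields $\varepsilon_{\beta}(x\lambda^2)\in B_A(6N)$ for \emph{all} $x\in R$. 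Finally one right-multiplies the original commutator by $\varepsilon_{\beta+2\chi}(-x\lambda^2)$ to cancel the companion and lands at $\varepsilon_{\beta+\chi}(x\lambda)\in B_A(8N)$. Without this two-stage scheme I do not see how to remove the long-root term, so the $B_n$ part of your proposal needs to be replaced.
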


\begin{proof}
First note that $\lambda\in\varepsilon(A,,\chi,N)$ is equivalent to $\varepsilon_{\chi}(\lambda)\in B_A(N)$. We distinguish two cases:
\begin{enumerate}
\item
{$\Phi\neq B_n$ for $n\geq 3.$
The important fact is that $\chi$ is a short root in $\Phi$ and that all of these root systems contain a root subsystem isomorphic to $A_2$ consisting of short roots. Hence after conjugating with a suitable Weyl group elements, we can assume that $\Phi=A_2$ with simple positive roots $\alpha,\beta$ and 
$\chi=\alpha+\beta.$ But observe that $w_{\beta}(\alpha)=\chi$ and hence $\varepsilon_{\alpha}(\lambda)\in B_A(N).$ 
For $x\in R$ arbitrary, we obtain further 
\begin{equation*}
\varepsilon_{\chi}(\pm x\lambda)=\left(\varepsilon_{\alpha}(\pm\lambda),\varepsilon_{\beta}(\pm x)\right)\in B_A(2N).
\end{equation*}
}
\item{$\Phi=B_n$ for $n\geq 3.$ After conjugation with Weyl group elements, we assume that $n=3$ and so after conjugation we have positive, simple roots $\alpha,\beta,\chi$ with $\alpha,\beta$ long and $\chi$ short and $\beta$ double-bonded to $\chi$ in the Dynkin-diagram corresponding to the simple roots $\alpha,\beta$ and $\chi.$ However for $x\in R$ arbitrary 
\begin{equation}
\label{Bn_equation}
B_A(2N)\ni(\varepsilon_{\chi}(\lambda),\varepsilon_{\beta}(x))=\varepsilon_{\beta+\chi}(x\lambda)\varepsilon_{\beta+2\chi}(x\lambda^2).
\end{equation}
The root $\beta+\chi$ is short however and so conjugate to $\chi$ under the Weyl group action 
and hence we have $\varepsilon_{\beta+\chi}(\lambda)\in B_A(N).$ Thus for $x=1$ we obtain 
$\varepsilon_{\beta+2\chi}(\lambda^2)=\varepsilon_{\beta+\chi}(-\lambda)(\varepsilon_{\beta+\chi}(\lambda)\varepsilon_{\beta+2\chi}(\lambda^2))\in B_A(3N).$ 
The root $\beta+2\chi$ is long and hence $\varepsilon_{\beta+2\chi}(\lambda^2)$ is (up to sign) conjugate to $\varepsilon_{\beta}(\lambda^2)$ and so
$\varepsilon_{\beta}(\lambda^2)\in B_A(3N)$.
Yet $\alpha,\beta$ are simple roots in a root subsystem of $B_3$ isomorphic to $A_2$ and hence we obtain as in the first item that
$\varepsilon_{\beta}(x\lambda^2)\in B_A(6N)$ for all $x\in R.$ Summarizing this with equation (\ref{Bn_equation}) we get $\varepsilon_{\beta+\chi}(x\lambda)\in B_A(8N)$
for all $x\in R.$ Hence after conjugation we are done.
}
\end{enumerate}
\end{proof}

\begin{remark}
This Lemma is a more quantitative version of Vasersteins \cite[Theorem~4(a)]{MR843808}.
\end{remark}

Next, we want to prove the following technical proposition yielding the first part of Theorem~\ref{fundamental_reduction}:

\begin{Proposition}
\label{higher_rank_centralization}
Let $R$ be a commutative ring with $1,\Phi$ an irreducible root system that is not $A_1, B_2$ or $G_2$ and $\chi$ an arbitrary short root in $\Phi$. Then there is a constant $L(\Phi)\in~\mathbb{N}$ (not depending on $R$ or $A$ or $\chi$) such that for all $A\in G(\Phi,R)$ there is an ideal $I(A)$ with $I(A)\subset\varepsilon(A,\chi,L(\Phi))$ and $l(A)\subset\sqrt{I(A)}.$  
\end{Proposition}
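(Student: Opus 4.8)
The plan is to reduce the statement to a first-order sentence in the language of rings (with finitely many ring variables, since all relevant ideals are finitely generated) and then invoke G\"odel's compactness theorem, so that it suffices to prove the statement for finitely generated commutative rings, or even just with an explicit bound that does not depend on $R$. More precisely, I would first fix the short root $\chi$ and an element $A \in G(\Phi,R)$, and consider the subgroup $H := \dl A \dr$, the normal closure of $A$ in $G(\Phi,R)$ — or rather the subgroup of $G(\Phi,R)$ generated by $E(\Phi,R)$-conjugates of $A$, which is normalized by $E(\Phi,R)$. Applying Abe's Theorem~\ref{Abe}, there is an ideal $J = J(A)$ and an additive subgroup $L$ of $J$ with $\bar E(J,L) \subset H \subset E^*(J,L)$. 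The candidate ideal will be $I(A) := J(A)$ (or a suitable power/multiple of it arising from the bounded-length bookkeeping), and the two things to check are: (a) $l(A) \subset \sqrt{I(A)}$, and (b) $I(A) \subset \varepsilon(A,\chi,L(\Phi))$ for a constant $L(\Phi)$ independent of $R,A,\chi$.

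For (a), the point is that the level ideal $l(A)$ measures how far $A$ is from being central modulo $l(A)$, while $J(A)$ is the ideal produced by Abe's description of the normal subgroup generated by $A$; one knows (from the standard theory, e.g. the relation between $l(A)$ and the ideal of a normal subgroup, cf.\ the discussion around Lemma~\ref{central_elements}) that $A$ normalizes $E(l(A),l(A))$-type subgroups, and conversely that $J(A)$ and $l(A)$ have the same radical, or at least that $l(A) \subset \sqrt{J(A)}$. I would extract this either from Abe's papers directly or by a short computation: commutators $(A,\varepsilon_\alpha(t))$ lie in $H$ and their level ideals are generated by entries that, as $\alpha$ and $t$ vary, generate an ideal with radical equal to $\sqrt{l(A)}$; since those commutators lie in $E^*(J,L)$ one gets entrywise containment modulo powers of $J$, giving $l(A) \subset \sqrt{J(A)}$.

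For (b), this is where the quantitative work lies and where I expect the main obstacle. We have $\bar E(J,L) \subset H$, so for every short root $\chi$ and every $x \in J$, the element $\varepsilon_\chi(x)$ lies in $\bar E(J,L)$, hence is a product of $E(\Phi,R)$-conjugates of $\varepsilon_\psi(y)$ with $y \in J$ ($\psi$ short) or $y \in L$ ($\psi$ long), intertwined with generators of $H = \langle A^{g} \rangle$. The difficulty is that \emph{a priori} the number of such factors is unbounded; I need a uniform bound $L(\Phi)$. The key tool to get a single generator into $B_A(\text{bounded})$ is Lemma~\ref{A2_parts}: if $\lambda \in \varepsilon(A,\chi,N)$ then $\lambda R \subset \varepsilon(A,\chi,8N)$, i.e.\ membership of $\varepsilon_\chi(\lambda)$ in a bounded ball upgrades to membership of $\varepsilon_\chi(\lambda r)$ for all $r$. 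So the strategy is: produce \emph{one} element $\lambda_0 \in l(A)$ (or in a power of it, or a generating tuple) lying in $\varepsilon(A,\chi, c)$ for an absolute constant $c$ by an explicit short computation — essentially, picking off a nonzero entry of $A$ (or of $A - (\text{scalar})$) via a bounded product of two or three commutators of $A$ with root elements and Weyl conjugations — then close up under multiplication by $R$ using Lemma~\ref{A2_parts}, and under addition using additivity of $\varepsilon_\chi$, to land on an ideal $I(A) \subset \varepsilon(A,\chi,L(\Phi))$; finally check $l(A) \subset \sqrt{I(A)}$ as in (a). The compactness-theorem framing is what lets me ignore, in the hard step, any issue of whether the ``bounded product'' length might secretly depend on $R$: the statement ``for all $A$, $\exists$ finitely generated ideal $I(A)$ with these two properties, witnessed by words of length $\le L(\Phi)$'' is, once $L(\Phi)$ is fixed, a first-order schema, so it is enough to verify it over, say, $\mathbb{Z}[y_{ij}]/(P)$ where the structure is concrete. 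The main obstacle, then, is the explicit short-word extraction of a useful ideal generator from an arbitrary $A$ via bounded commutator gymnastics in the $A_2$ and $B_n$ subsystems — keeping the word length absolutely bounded while still capturing enough of $l(A)$ to have the right radical.
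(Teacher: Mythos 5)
You correctly identify the two key ingredients — Abe's Theorem~\ref{Abe} and Lemma~\ref{A2_parts} — and you correctly see that part (a), $l(A)\subset\sqrt{J(A)}$, follows from passing to $R/\sqrt{J}$ and invoking Lemma~\ref{central_elements} to see $\pi_{\sqrt J}(A)$ is central. However, your account of how compactness is used, and where the hard work lies, misses the actual mechanism of the paper's proof.

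You write that the ``compactness-theorem framing lets me ignore \ldots any issue of whether the bounded product length might secretly depend on $R$'' and that ``once $L(\Phi)$ is fixed \ldots it is enough to verify it over, say, $\mathbb{Z}[y_{ij}]/(P)$.'' This has it backwards. Truth of a first-order sentence in the universal ring $\mathbb{Z}[y_{ij}]/(P)$ does not imply truth in all commutative rings; first-order sentences do not specialize along arbitrary ring homomorphisms, and the statement in question (involving radicals and an existential over $I(A)$) is not positive in the relevant sense. What compactness actually does in the paper is extract the uniform bound $L(\Phi)$ \emph{out of the qualitative, unbounded existence statement} coming from Abe: one writes down, for each pair $(k,l)$, a theory $\mathcal{T}_{kl}$ whose models are (ring, $A\in G(\Phi,R)$)-pairs, with axioms (1)--(4) enforcing the ring and group-scheme structure and a family $\theta_r$ asserting that $\varepsilon_\chi$ evaluated at some power of the $(k,l)$-th level-ideal generator is \emph{never} a product of $r$ conjugates of $A^{\pm1}$. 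Abe's theorem (together with the central-elements lemma over $R/\sqrt J$) shows that \emph{every} model of (1)--(4) must violate \emph{some} $\theta_{r'}$; hence $\mathcal{T}_{kl}$ is inconsistent; hence by compactness a finite subset is inconsistent; hence a single $\theta_{L_{kl}(\Phi)}$ already contradicts (1)--(4), which is exactly the uniform bound. No explicit short-word commutator extraction is ever performed.

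This is the crux of your gap: you flag ``the explicit short-word extraction of a useful ideal generator from an arbitrary $A$ via bounded commutator gymnastics'' as ``the main obstacle,'' and you do not resolve it. But the paper's proof precisely avoids this obstacle — the compactness argument is what makes explicit commutator gymnastics unnecessary. The real work is in formulating the first-order theory correctly (in particular, introducing constants $e(k,l,v)$ for arbitrary powers of the level-ideal generators, so that ``some power lies in $J$'' is expressible, and making $G(\Phi,\cdot)$ and inversion first-order via the defining polynomials $P$), and only afterwards applying Lemma~\ref{A2_parts} to promote the resulting single element of $\varepsilon(A,\chi,L_{kl}(\Phi))$ to an ideal. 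As written, your proposal would leave a genuine hole, since the ``short explicit computation'' you defer to does not exist in the paper and is not needed.
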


\begin{proof}
First, choose polynomials $P$ in $\mathbb{Z}[y_{ij}]$ characterizing elements of $G(\Phi,\cdot)$ and $1\leq k,l\leq n_1+\cdots+n_u:=n$ with not both $k,l$ equal to $n$.

Next, let a language $\C L$ with the relation symbols, constants and function symbols 
\begin{equation*}
(\C R,0,1,+,\times,(a_{i,j})_{1\leq i,j\leq n},(e(k,l,v))_{v\in\mathbb{N}}) 
\end{equation*}
and a further function symbol $\cdot^{-1}:\C R^{n\times n}\to\C R^{n\times n}$ be given. Note that we use capital letters to denote matrices of variables (or constants) in the language in the following. For example the symbol $\C A$ denotes the $n\times n$-matrix of constants $(a_{i,j})$ and $X$ commonly refers to matrices of $n\times n$ variables in $\C L$. We also use the notation $X^{-1}:=^{-1}(X)$. 
Yet this is only a way to simplify notation, because first order sentences about matrices can always be reduced to first order sentences about their entries.

Let the first order theory $\C T_{kl}$ contain:
\begin{enumerate}
\item{Sentences forcing the universe $R:=\C R^{\C M}$ of each model $\C M$ of $\C T_{kl}$ is a commutative ring with respect to the functions 
$+^{\C M},\times^{\C M}$ and with $0^{\C M},1^{\C M}$ being $0$ and $1$.}
\item{For all $v\in\mathbb{N}$: 
If $k\neq l$ the sentence $e(k,l,v)=a_{k,l}^v$ should be included in $\C T_{kl}$. If on the other hand $k=l$, then choose the smallest $w\in\{1,\dots,u\}$ with 
$k<n_1+\cdots+n_w$ and include the sentence $e(k,l,v)=(a_{k,k}-a_{n_w,n_w})^v$.}
\item{The sentence $P(\C A)=0$.}
\item{The sentence $\forall X:(P(X)=0)\rightarrow (XX^{-1}=I_n),$ where $I_n$ denotes the unit matrix in $\C R^{n\times n}$ with entries the constant symbols $0,1$ as appropriate.}
\item{A family of sentences $(\theta_r)_{r\in\mathbb{N}}$ as follows: 
\begin{align*}
\theta_r:&\bigwedge_{1\leq v\leq r}\forall X_1^{(v)},\dots,X_r^{(v)},\forall e_1^{(v)},\dots,e_r^{(v)}\in\{0,1,-1\}:\\
&((P(X_1^{(v)})=\cdots=P(X_r^{(v)})=0)\rightarrow 
(\varepsilon_{\chi}(e(k,l,v))\neq (\C A^{e_1})^{X_1^{(v)}}\cdots(\C A^{e_r^{(v)}})^{X_r^{(v)}})
\end{align*}
Here $\C A^{1}:=\C A,\C A^{-1}:=\C A^{-1}$ and $\C A^{0}:=I_n.$
}
\end{enumerate} 

We first show that the theory $\C T_{kl}$ is inconsistent. To this end, let $\C M$ be a model for the sentences in (1) through (4)
and let $R:=R^{\C M}$ be the universe of $\C M.$ The sentences in (1) enforce that $R$ is a commutative ring with $1=1^{\C M}$ and $0=0^{\C M}$ and (3) enforces
that the matrix $A:=(a_{i,j}^{\C M})\in R^{n\times n}$ is an element of the split Chevalley group $G(\Phi,R).$ 
Let $H$ be the subgroup of $G(\Phi,R)$ normally generated by $A$. 
According to Theorem~\ref{Abe} there is a pair $(J,L)$ such that 
\begin{equation*}
\bar{E}(J,L)\subset H\subset E^*(J,L).
\end{equation*}
As $L\subset J$ holds, $A\in E^*(J,L)$ implies that $\pi_J(A)$ commutes with $E(R/J)$ and consequently that $\pi_{\sqrt{J}}(A)$ commutes with 
$E(R/\sqrt{J}).$ The ring $R/\sqrt{J}$ is reduced and so $\pi_{\sqrt{J}}(A)$ has the form described in Lemma~\ref{central_elements}.
This implies that $l(A)\subset\sqrt{J}.$ Hence as $\bar{E}(J,L)\subset H$, there is a constant $r'\in\mathbb{N}$ such that 
$\varepsilon_{\chi}(e(k,l,r')^{\C M})\in B_A(r').$ But this contradicts the statement $\theta_{r'}^{\C M}.$

So summarizing: a model of the sentences in (1) through (4) cannot be a model of all of the sentences $\theta_r$. Hence there is in fact no model of all of the above sentences and hence $\C T_{kl}$ is inconsistent. G\"odel's Compactness Theorem \cite{MR2596772} implies then, that a certain finite subset 
$\C T_{kl}^0\subset\C T_{kl}$ is already inconsistent. Hence there is only a finite collection of the $\theta_r$ contained in 
$\C T_{kl}^0.$ So let $L_{kl}(\Phi)\in\mathbb{N}$ be the largest $r\in\mathbb{N}$ with $\theta_r\in\C T_{kl}^0.$

For all $r\in\mathbb{N}$, we have $\{(1)-(4),\theta_{r+1}\}\vdash\theta_r.$ Hence the subset 
$\C T_{kl}^1\subset\C T_{kl}$ that contains all sentences in (1) through (4) and the \textit{single} sentence $\theta_{L_{kl}(\Phi)}$, must be inconsistent as well. 

Let $R$ be an arbitrary commutative ring with $1$ and let $A\in G(\Phi,R)$ be given. This gives us a model $\C M$ of (1) through (4) and hence as 
$\C T_{kl}^1$ is inconsistent, this model must violate the sentence $\theta_{L_{kl}(\Phi)}.$ Thus there are elements 
$g_1,\dots,g_{L_{kl}(\Phi)}\in G(\Phi,R)$ and $e_1,\dots,e_{L_{kl}(\Phi)}\in\{0,1,-1\}$ as well as a natural number $v\leq L_{kl}(\Phi)$
such that  
\begin{equation*}
\varepsilon_{\chi}(e(k,l,v)^{\C M})=(A^{e_1})^{g_1}\cdots (A^{e_{L_{kl}(\Phi)}})^{g_{L_{kl}(\Phi)}}.
\end{equation*} 
Hence we obtain that either a power of $a_{kl}$ (in case $k\neq l$) or a power of $a_{kk}-a_{n_1+\cdots+n_w,n_1+\cdots+n_w}$ (in case $k=l$) is an element of 
$\varepsilon(A,\chi,L_{kl}(\Phi)).$ So setting 
\begin{equation*}
L(\Phi):=\sum_{1\leq k,l\leq n\\ \text{ not both }k,l=n} 8L_{kl}(\Phi),  
\end{equation*}
we get together with Lemma~\ref{A2_parts} an ideal $I(A)$ in $R$ such that $I(A)\subset\varepsilon(A,\chi,L(\Phi))$ and $l(A)\subset\sqrt{I(A)}$ holds. This ideal $I(A)$  has the desired properties as stated in the proposition for the single root $\chi.$ Note, that all short roots are conjugate under elements of the Weyl group and hence we have 
$\varepsilon(A,\chi_1,L(\Phi))=\varepsilon(A,\chi_2,L(\Phi))$ for two short roots $\chi_1,\chi_2$ in $\Phi$, so the conclusion does not depend on the specific shoort
$\chi.$
\end{proof}

\begin{remark}
Compare this result with Morris' result \cite[Theorem~6.1(1)]{MR2357719}. In case of $\Phi=A_n$ and $R$ an order in a ring of algebraic integers, 
Proposition~\ref{higher_rank_centralization} is a consequence of \cite[Theorem~6.1(1)]{MR2357719} and \cite[Theorem~6.4]{MR2357719} by way of considering the normal subgroup $N:=\dl A\dr.$
\end{remark}

\subsection{Level ideals for ${\rm Sp}_4$}

Remember that $B_2$ has the positive roots $\alpha,\beta,\alpha+\beta$ and $\chi=2\alpha+\beta$ with $\alpha$ short and 
$\beta$ long and both simple. Again, we invoke a compactness argument. The main ingredient is the following observation due to Costa and Keller instead of Theorem~\ref{Abe}:

\begin{Theorem}\cite[Theorem~2.6, 4.2, 5.1, 5.2]{MR1162432}
\label{Keller_B2}
Let $R$ be a commutative ring with $1$. Let $A\in{\rm Sp}_4(R)$ be given. Then for all $x\in l(A)$ one has 
$\varepsilon_{\chi}(2x+x^2)\varepsilon_{\alpha+\beta}(x^2)\subset\dl A\dr_{E(B_2,R)}$, where $\dl A\dr_{E(B_2,R)}$ denotes the subgroup of ${\rm Sp}_4(R)$ generated by the
$E(B_2,R)$-conjugates of $A.$ 
\end{Theorem}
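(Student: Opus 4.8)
The plan is to run a commutator calculus inside ${\rm Sp}_4(R)$. The group $\dl A\dr_{E(B_2,R)}$ is normalized by $E(B_2,R)$ and, crucially, it contains every commutator $(A,\varepsilon_\gamma(t))$ with $\gamma\in B_2$ and $t\in R$ (write $(A,\varepsilon_\gamma(t))=A\cdot\big(\varepsilon_\gamma(t)A\varepsilon_\gamma(t)^{-1}\big)^{-1}$, both factors being in $\dl A\dr_{E(B_2,R)}$); I will use such commutators together with $E(B_2,R)$-conjugations to manufacture the displayed element. Since $l(A)$ is finitely generated by the off-diagonal entries $a_{ij}$ and the diagonal differences $a_{ii}-a_{jj}$ of $A$, the first reduction is to prove the claim for $x$ one of these generators, and then to propagate it to all of $l(A)$.

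\textbf{Manufacturing the element from the entries of $A$.} Fix a generator $x=a_{ij}$ of $l(A)$. Working with the explicit matrices $\varepsilon_\alpha(t)=I_4+t(e_{12}-e_{43})$, $\varepsilon_{\alpha+\beta}(t)=I_4+t(e_{14}+e_{23})$, $\varepsilon_\beta(t)=I_4+te_{24}$, $\varepsilon_\chi(t)=I_4+te_{13}$ and the transposes for negative roots, one computes that $(A,\varepsilon_\gamma(1))$ has nonzero entries which are $\mathbb{Z}$-linear combinations of the $a_{ij}$ and the $a_{ii}-a_{jj}$. Commuting such an element repeatedly with short and long root elements and conjugating by $E(B_2,R)$ "sweeps" a chosen entry $x$ into the $(1,3)$-slot, i.e.\ the $\chi$-position. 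The point is that, because every element of ${\rm Sp}_4(R)$ preserves the alternating form $J$, the sweep cannot deposit the bare transvection $I_4+xe_{13}$: the symplectic relations together with the $B_2$ commutator formulas $(\varepsilon_{\alpha+\beta}(b),\varepsilon_\alpha(a))=\varepsilon_\chi(\pm2ab)$ and $(\varepsilon_\beta(b),\varepsilon_\alpha(a))=\varepsilon_{\alpha+\beta}(\pm ab)\varepsilon_\chi(\pm a^2b)$ of Lemma~\ref{commutator_relations}(3) force the accompanying $(1,4)$- and $(2,3)$-entries to be $x^2$ and the $(1,3)$-entry to be $2x+x^2$, so that the output of the sweep is exactly
\begin{equation*}
I_4+(2x+x^2)e_{13}+x^2(e_{14}+e_{23})=\varepsilon_\chi(2x+x^2)\,\varepsilon_{\alpha+\beta}(x^2).
\end{equation*}
In other words the long-root and short-root corrections are entangled, which is why the conclusion is stated as this product rather than as $\varepsilon_\chi(x)$.

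\textbf{Diagonal differences and propagation.} For a generator $x=a_{ii}-a_{jj}$ an extra step is needed: conjugating $A$ by a Weyl element $w_\phi$, or replacing $A$ by $A\cdot(A,\varepsilon_\gamma(1))$, yields an element of $\dl A\dr_{E(B_2,R)}$ in which $a_{ii}-a_{jj}$ now sits in an off-diagonal position, and then the previous paragraph applies. To pass from the generators of $l(A)$ to all of $l(A)$, the cleanest route is to show $\dl A\dr_{E(B_2,R)}\supseteq\bar E\big(l(A),L\big)$, where $L\subseteq l(A)$ is the smallest "form parameter" containing all $2y$ and $y^2$ for $y\in l(A)$; this is the sandwich inclusion $\bar E(J,L)\subseteq H$ in the classification of $E(B_2,R)$-normalized subgroups of ${\rm Sp}_4$. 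Granting it, every $x\in l(A)$ satisfies $2x+x^2\in L$ and $x^2\in l(A)$, so $\varepsilon_\chi(2x+x^2)$ (a long-root generator, parameter in $L$) and $\varepsilon_{\alpha+\beta}(x^2)$ (a short-root generator, parameter in $l(A)$) are both among the standard generators of $\bar E(l(A),L)$ and their product lies in it. The axioms forced on $L$ — that the cross term $2xy$ coming from $2(x+y)+(x+y)^2=(2x+x^2)+(2y+y^2)+2xy$ stays in $L$, and that commuting a $\chi$-element past $\varepsilon_\alpha(r)$ via $(\varepsilon_{\alpha+\beta}(x^2),\varepsilon_\alpha(r))=\varepsilon_\chi(\pm2rx^2)$ keeps parameters in $L$ — are precisely what make this definition of $L$ the right one.

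\textbf{Main obstacle.} The genuine difficulty is carrying out the sweep, and the inclusion $\bar E(l(A),L)\subseteq\dl A\dr_{E(B_2,R)}$, over an \emph{arbitrary} commutative ring: no units are available, so $A$ cannot be diagonalized or normalized, and $2$ may be a zero divisor, so the linear term truly cannot be detached from its square. Every quadratic correction produced along the way must be shown to land in the additive subgroup $L$ and nowhere else; keeping this bookkeeping consistent is exactly the technical core of Costa and Keller's analysis of the normal subgroup structure of ${\rm Sp}_4$, and it is the reason the statement is phrased with $\varepsilon_\chi(2x+x^2)\varepsilon_{\alpha+\beta}(x^2)$.
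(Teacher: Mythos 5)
This statement is quoted verbatim from Costa and Keller \cite{MR1162432}; the paper does not supply a proof of its own, it simply cites Theorems 2.6, 4.2, 5.1 and 5.2 of that reference. Your proposal correctly names the strategic ingredients of the Costa--Keller argument — commutator calculus in $\dl A\dr_{E(B_2,R)}$, the role of the form parameter $L$, and the fact that over a ring where $2$ may be a zero divisor the linear and quadratic contributions to the $\chi$- and $(\alpha+\beta)$-positions cannot be separated, which is why the conclusion has the shape $\varepsilon_\chi(2x+x^2)\varepsilon_{\alpha+\beta}(x^2)$ rather than a bare transvection. That reading of the theorem is accurate and worth recording.

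However, what you have written is an outline, not a proof, and it has two genuine gaps. First, the ``sweep'' — the claim that iterated commutators of $A$ with root elements together with $E(B_2,R)$-conjugation produce exactly $I_4+(2x+x^2)e_{13}+x^2(e_{14}+e_{23})$ for a generator $x$ of $l(A)$ — is asserted but never computed; this is precisely the multi-page matrix calculation that occupies the cited sections of Costa--Keller, and waving at the commutator formulas of Lemma~\ref{commutator_relations}(3) does not substitute for it. Second, your passage from generators of $l(A)$ to arbitrary $x\in l(A)$ hinges on ``granting'' the inclusion $\bar E(l(A),L)\subseteq\dl A\dr_{E(B_2,R)}$, which you correctly identify as the sandwich inclusion in the classification of $E(B_2,R)$-normalized subgroups of ${\rm Sp}_4$. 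But that inclusion, with the ideal identified as $l(A)$, is exactly what Costa and Keller's Theorems 4.2/5.1/5.2 establish — so taking it for granted makes the argument circular. Even staying at the elementary level, the additivity step needs care: $2(x+y)+(x+y)^2=(2x+x^2)+(2y+y^2)+2xy$ and $(x+y)^2=x^2+y^2+2xy$ produce a cross term, and showing $\varepsilon_\chi(2xy)\varepsilon_{\alpha+\beta}(2xy)\in\dl A\dr_{E(B_2,R)}$ requires its own commutator argument that your sketch does not supply.
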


Root elements in ${\rm Sp}_4$ are more complicated than in higher rank groups:

\begin{Lemma}
\label{B2_ideals}
Let $R$ be a commutative ring with $1$ and $S\subset{\rm Sp}_4(R).$ Let $\lambda\in R$ and 
$N\in\mathbb{N}$ be given. Then 
\begin{enumerate}
\item{
$\varepsilon_{\chi}(2\lambda+\lambda^2)\varepsilon_{\alpha+\beta}(\lambda^2)\in B_S(N)$ implies 
$\{\varepsilon_{\chi}(2x\lambda^2)|x\in R\}\subset B_S(2N).$}
\item{$\varepsilon_{\chi}(\lambda)\in B_S(N)$ implies $\varepsilon_{\phi}(\lambda)\in B_S(3N)$ for all $\phi$ short.}
\item{$\varepsilon_{\alpha}(x\lambda)\in B_S(N)$ for all $x\in R$ implies $\varepsilon_{\phi}(x\lambda^2)\in B_S(3N)$ for all $\phi$ long and all $x\in R$.}
\item{$\varepsilon_{\chi}(\lambda)\in B_S(N)$ implies $\{\varepsilon_{\chi}(2x\lambda)|x\in R\}\subset B_S(6N).$} 
\item{$\varepsilon_{\chi}(2\lambda+\lambda^2)\varepsilon_{\alpha+\beta}(\lambda^2)\in B_S(N)$ implies 
$\{\varepsilon_{\phi}(2x\lambda^2)|x\in R,\phi\in B_2\}\subset B_S(6N)$.}
\end{enumerate}
All of the above implications stay true, if the balls $B_S$ are replaced by a normal subgroup of ${\rm Sp}_4(R).$
\end{Lemma}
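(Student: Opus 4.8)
The whole lemma consists of manipulations with the commutator formulas of Lemma~\ref{commutator_relations}(3), so the plan is to treat each of the five items by producing the required element as a commutator of things already known to lie in $B_S(\cdot)$, using only the additivity of $\varepsilon_{\phi}$, the fact that short (resp.\ long) roots of $B_2$ are Weyl-conjugate, and the trivial bound $\|AB\|_S\le\|A\|_S+\|B\|_S$ together with $\|A^g\|_S=\|A\|_S$. For item (1), I would write $\varepsilon_{\chi}(2x\lambda^2)$ as a commutator $(\varepsilon_{\alpha+\beta}(\lambda^2),\varepsilon_{\alpha}(x))$ via the first formula in Lemma~\ref{commutator_relations}(3) — indeed $(\varepsilon_{\alpha+\beta}(b),\varepsilon_{\alpha}(a))=\varepsilon_{2\alpha+\beta}(\pm 2ab)$, so with $b=\lambda^2$, $a=x$ this gives $\varepsilon_{\chi}(\pm 2x\lambda^2)$ — but first I need $\varepsilon_{\alpha+\beta}(\lambda^2)\in B_S(N)$. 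That follows from the hypothesis $\varepsilon_{\chi}(2\lambda+\lambda^2)\varepsilon_{\alpha+\beta}(\lambda^2)\in B_S(N)$ by noting $\chi=2\alpha+\beta$ and $\alpha+\beta$ commute (their sum is not a root), projecting onto the $\alpha+\beta$-coordinate is harmless, so really one extracts $\varepsilon_{\alpha+\beta}(\lambda^2)$ at cost $N$ (the $\chi$-factor costs nothing extra after the commutator kills it); then one commutator doubles to $B_S(2N)$. For items (2) and (3): short roots are all Weyl-conjugate, so $\varepsilon_{\chi}(\lambda)\in B_S(N)$ gives $\varepsilon_{\psi}(\lambda)\in B_S(N)$ for any short $\psi$, and the remaining short root is reached via one commutator of the form in Lemma~\ref{commutator_relations}(3), which also throws in a long-root factor that must be peeled off — this is where the factor $3$ comes from (the commutator itself, plus removing the extra $\varepsilon_{2\alpha+\beta}$ term which is again Weyl-conjugate to something already controlled, at a cost I would bound by $2N$ more). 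Item (3) is the mirror computation: from $\varepsilon_{\alpha}(x\lambda)\in B_S(N)$ for all $x$ one uses $(\varepsilon_{\beta}(b),\varepsilon_{\alpha}(a))=\varepsilon_{\alpha+\beta}(\pm ab)\varepsilon_{2\alpha+\beta}(\pm a^2b)$ with $a=x\lambda$, cancelling the $\alpha+\beta$-part using that $\alpha+\beta$ is short hence Weyl-conjugate to $\alpha$, leaving $\varepsilon_{2\alpha+\beta}(\pm x^2\lambda^2)$, and $2\alpha+\beta=\chi$ is long, Weyl-conjugate to $\beta$.

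For item (4), from $\varepsilon_{\chi}(\lambda)\in B_S(N)$ I would first get, via item (2), $\varepsilon_{\alpha+\beta}(\lambda)\in B_S(3N)$ (both short), then apply the same commutator $(\varepsilon_{\alpha+\beta}(b),\varepsilon_{\alpha}(a))=\varepsilon_{2\alpha+\beta}(\pm 2ab)$ with $b=\lambda$, $a=x$ to land $\varepsilon_{\chi}(\pm 2x\lambda)$ at cost $2\cdot 3N=6N$. Item (5) combines items (1) and (2): item (1) already gives $\varepsilon_{\chi}(2x\lambda^2)\in B_S(2N)$ for all $x$; to reach the other long root $\beta$ one observes $2x\lambda^2$ ranges over $2\lambda^2 R$, so in particular $\varepsilon_{\chi}(2\lambda^2)\in B_S(2N)$, and then the route of item (3) (or a direct Weyl conjugation of $\varepsilon_{\chi}$, long, to $\varepsilon_{\beta}$, long) plus one more commutator to restore the $R$-multiple gives $\varepsilon_{\beta}(2x\lambda^2)\in B_S(6N)$; since $\beta$ and $\chi$ are the only long roots of $B_2$, we are done. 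Throughout I would invoke the Lemma on page establishing $\|\varepsilon_{\phi}(x)\|_S=\|\varepsilon_{w_\alpha(\phi)}(x)\|_S$ to move between Weyl-conjugate roots at zero cost.

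The final sentence of the lemma — that all implications survive replacing $B_S(\cdot)$ by a normal subgroup $M\le{\rm Sp}_4(R)$ — needs essentially no new work: every step used only (a) closure under products, (b) closure under conjugation by arbitrary group elements, and (c) closure under the specific commutators, all of which hold in any normal subgroup. Concretely I would restate the argument with "$x\in B_S(k)$" replaced by "$x\in M$" and drop all the numerical counters; the commutator identities of Lemma~\ref{commutator_relations}(3) and the Weyl-conjugacy of roots do the rest, since if $\varepsilon_{\chi}(\lambda)\in M$ then $\varepsilon_{\psi}(\lambda)=\varepsilon_{\chi}(\lambda)^{w}\in M$ for the appropriate Weyl element $w$, and commutators of elements of $M$ with elements of ${\rm Sp}_4(R)$ lie in $M$ by normality.

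The main obstacle I anticipate is bookkeeping rather than conceptual: in items (1), (3) and (5) the relevant commutator formula produces a product of two root elements (a short one and a long one, or $\alpha+\beta$ and $2\alpha+\beta$), and one must carefully justify that the unwanted factor can be removed at a controlled cost — this requires knowing it is Weyl-conjugate to a root element already bounded, and tracking exactly how the constants $N\mapsto 2N, 3N, 6N$ accumulate. I would double-check that the stated constants (and not something slightly larger) actually suffice; if a naive count gives, say, $4N$ where the lemma claims $3N$, one needs the sharper observation that peeling off the extra factor can reuse part of the same ball. I do not expect sign issues to matter, since $\|\cdot\|_S$ is inversion-invariant as remarked after Lemma~\ref{commutator_relations}.
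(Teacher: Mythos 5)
There are several genuine gaps in your proposal, all concentrated around the length and identity of the root $\chi$ and the way you handle products of root elements.

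\textbf{The root $\chi=2\alpha+\beta$ is long, not short.} This undermines your argument for item (2): you write that ``short roots are all Weyl-conjugate, so $\varepsilon_{\chi}(\lambda)\in B_S(N)$ gives $\varepsilon_{\psi}(\lambda)\in B_S(N)$ for any short $\psi$,'' but $\chi$ and any short root lie in different Weyl orbits, so this implication is false. (Your own subsequent talk of a ``remaining'' short root and a factor of $3$ also contradicts it --- if the claim held you would be done at cost $N$.) The paper's route is: conjugate $\varepsilon_{\chi}(\lambda)$ to $\varepsilon_{\beta}(\lambda)\in B_S(N)$ (both long), compute $(\varepsilon_{\beta}(\lambda),\varepsilon_{\alpha}(1))=\varepsilon_{\alpha+\beta}(\pm\lambda)\varepsilon_{\chi}(\pm\lambda)\in B_S(2N)$, and peel off the $\varepsilon_{\chi}(\pm\lambda)\in B_S(N)$ factor to get $\varepsilon_{\alpha+\beta}(\lambda)\in B_S(3N)$; then Weyl-conjugate within the short orbit.

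\textbf{You cannot extract a factor from a product in $B_S(N)$.} In item (1) you claim to ``extract $\varepsilon_{\alpha+\beta}(\lambda^2)$ at cost $N$'' from the hypothesis $\varepsilon_{\chi}(2\lambda+\lambda^2)\varepsilon_{\alpha+\beta}(\lambda^2)\in B_S(N)$. A product lying in a ball gives no control on either factor individually. The correct move --- which your parenthetical ``the $\chi$-factor costs nothing after the commutator kills it'' hints at but does not carry out --- is to form the commutator of $\varepsilon_{\alpha}(x)$ with the \emph{entire} given element: since $\alpha+\chi=3\alpha+\beta\notin B_2$, the factor $\varepsilon_{\chi}(2\lambda+\lambda^2)$ drops out, and one is left with $(\varepsilon_{\alpha}(x),\varepsilon_{\alpha+\beta}(\lambda^2))=\varepsilon_{\chi}(\pm 2x\lambda^2)$, lying in $B_S(2N)$ because a commutator of a given element with a ball element of weight $N$ costs $2N$.

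\textbf{Item (3) has the wrong polynomial degree.} You set $a=x\lambda$ (and implicitly $b=1$) in $(\varepsilon_{\beta}(b),\varepsilon_{\alpha}(a))=\varepsilon_{\alpha+\beta}(\pm ab)\varepsilon_{\chi}(\pm a^2 b)$, which produces $\varepsilon_{\chi}(\pm x^2\lambda^2)$, not $\varepsilon_{\chi}(\pm x\lambda^2)$; as $x$ ranges over $R$, $x^2$ does not. The paper sets $a=\lambda$, $b=x$, getting $\varepsilon_{\alpha+\beta}(\pm x\lambda)\varepsilon_{\chi}(\pm x\lambda^2)\in B_S(2N)$, and uses the hypothesis ($\varepsilon_{\alpha}(x\lambda)\in B_S(N)$ for all $x$, hence by Weyl-conjugacy $\varepsilon_{\alpha+\beta}(x\lambda)\in B_S(N)$) to peel off the short factor at cost $N$, landing $\varepsilon_{\chi}(x\lambda^2)\in B_S(3N)$.

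Items (4) and the normal-subgroup addendum are fine, and item (5) does follow from (1) and (2) once those are corrected (you only sketched the long roots, but (2) applied to $\lambda'=2x\lambda^2$ at weight $2N$ covers the short ones at cost $6N$). The overall toolkit you chose --- the $B_2$ commutator formulas, Weyl conjugacy at zero cost, inversion-invariance --- is the right one and is what the paper uses; but the bookkeeping, which you rightly flagged as the likely failure point, does indeed fail at the three places above.
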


\begin{proof}
For the first part inspect the commutator $(\varepsilon_{\alpha}(x),\varepsilon_{\chi}(2\lambda+\lambda^2)\varepsilon_{\alpha+\beta}(\lambda^2))$
for $x\in R$ arbitrary. 
For the second part, note that $\varepsilon_{\chi}(\lambda)$ is conjugate to $\varepsilon_{\beta}(\lambda)$ and so
$\varepsilon_{\beta}(\lambda)\in~B_S(N).$ Note further
\begin{equation*}
B_A(2N)\ni(\varepsilon_{\beta}(\lambda),\varepsilon_{\alpha}(1))=\varepsilon_{\alpha+\beta}(\pm\lambda)\varepsilon_{\chi}(\pm\lambda).
\end{equation*}
These two facts imply $\varepsilon_{\alpha+\beta}(\lambda)\in B_S(3N).$ The element $\varepsilon_{\alpha+\beta}(\lambda)$ is conjugate to $\varepsilon_{\phi}(\lambda)$
for every short root $\phi\in B_2$. This proves the second part and the third part follows by considering for $x\in R$ the commutator
\begin{equation*}
B_A(2N)\ni(\varepsilon_{\beta}(x),\varepsilon_{\alpha}(\lambda))=\varepsilon_{\alpha+\beta}(\pm x\lambda)\varepsilon_{\chi}(\pm x\lambda^2).
\end{equation*}
and noting $\varepsilon_{\alpha+\beta}(\pm x\lambda)\in B_A(N).$
For the fourth part, note that we have by the second part, that $\varepsilon_{\alpha}(\lambda)\in B_S(3N).$ Next inspect for $x\in R$ the commutator:
\begin{equation*}
B_S(6N)\ni(\varepsilon_{\alpha}(\lambda),\varepsilon_{\alpha+\beta}(x))=\varepsilon_{\chi}(2x\lambda).
\end{equation*}
This proves the fourth part. The last part follows from part (1) and (2).
\end{proof}

With this lemma, the second case of Theorem~\ref{fundamental_reduction} follows:

\begin{Proposition}
\label{B2_centralization}
Let $R$ be a commutative ring with $1$ and let $A\in{\rm Sp}_4(R)$ be given. Then there is a constant $L(B_2)$ (not depending on $A$ or $R$) such that
$2l(A)_2\subset\varepsilon(A,\phi,L(B_2))$ for $\phi\in B_2$ arbitrary. 
\end{Proposition}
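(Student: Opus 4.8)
The plan is to run the same compactness argument as in Proposition~\ref{higher_rank_centralization}, but with Theorem~\ref{Keller_B2} playing the role of Theorem~\ref{Abe} and with the generators of $l(A)_2$ as the target data. Concretely, for each pair $1\le k,l\le 4$ I would set up a first-order theory $\C T_{kl}$ over the language $(\C R,0,1,+,\times,(a_{i,j}),(e(k,l,v))_v,{}^{-1})$ whose axioms (1)--(4) force the universe to be a commutative ring with $1$ and force $\C A=(a_{i,j})$ to lie in ${\rm Sp}_4$ (using the explicit polynomial relations $A^TJA=J$), where $e(k,l,v)$ names $a_{k,l}^v$ when $k\ne l$ and $(a_{k,k}-a_{l,l})^v$ when $k=l$. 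The key modification is axiom family (5): instead of asking whether $\varepsilon_\chi(e(k,l,v))$ lies in a bounded ball $B_{\C A}(r)$, I would ask whether the \emph{product} $\varepsilon_\chi(2e(k,l,v)+e(k,l,v)^2)\varepsilon_{\alpha+\beta}(e(k,l,v)^2)$ lies in $B_{\C A}(r)$, since that is the shape of element Theorem~\ref{Keller_B2} delivers. So $\theta_r$ asserts that for no $v\le r$ and no choice of $r$ conjugators and signs is this product equal to a product of $r$ conjugates of $\C A^{\pm1}$.

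The inconsistency of $\C T_{kl}$ is then shown exactly as before: given a model, its universe $R$ is a commutative ring, $A$ lies in ${\rm Sp}_4(R)$, and $\dl A\dr_{E(B_2,R)}$ contains $\varepsilon_\chi(2x+x^2)\varepsilon_{\alpha+\beta}(x^2)$ for every $x\in l(A)$ by Theorem~\ref{Keller_B2}. Taking $x$ to be the relevant generator of $l(A)$ (either $a_{k,l}$ or $a_{k,k}-a_{l,l}$), this element is a finite product of $E(B_2,R)$-conjugates of $A^{\pm1}$, i.e.\ it lies in $B_A(r')$ for some $r'$, contradicting $\theta_{r'}$. Hence $\C T_{kl}$ is inconsistent, G\"odel compactness gives a finite inconsistent subtheory, and since $\{(1)\text{--}(4),\theta_{r+1}\}\vdash\theta_r$ we may replace it by $\{(1)\text{--}(4),\theta_{L_{kl}(B_2)}\}$ for a single constant $L_{kl}(B_2)$ independent of $R$ and $A$. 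Unwinding this over an arbitrary $R$ and $A\in{\rm Sp}_4(R)$ shows that for each generator $x$ of $l(A)$ we have $\varepsilon_\chi(2x+x^2)\varepsilon_{\alpha+\beta}(x^2)\in B_A(L_{kl}(B_2))$.

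From here I would feed this into Lemma~\ref{B2_ideals}. Part~(1) of that lemma turns $\varepsilon_\chi(2x+x^2)\varepsilon_{\alpha+\beta}(x^2)\in B_A(N)$ into $\{\varepsilon_\chi(2yx^2)\mid y\in R\}\subset B_A(2N)$, so $2x^2 R\subset\varepsilon(A,\chi,2N)$; running this over the finitely many generators $x$ of $l(A)$ and summing, the set $\sum_x 2x^2R$ generates an ideal containing $2l(A)_2$ up to the usual radical/sum bookkeeping — more precisely $2l(A)_2=\sum_x 2x^2R$ by definition of $l(A)_2$ — and each generator is reached within $\card{S}$-free bounded word length $2L_{kl}(B_2)$, so setting $L(B_2):=\sum_{k,l}6L_{kl}(B_2)$ (the extra factor absorbing parts (2)--(5) of Lemma~\ref{B2_ideals}, which propagate the bound from $\chi$ to an arbitrary $\phi\in B_2$) gives $2l(A)_2\subset\varepsilon(A,\phi,L(B_2))$ for every $\phi\in B_2$. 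The main obstacle I anticipate is purely bookkeeping: making sure the first-order encoding of ``product of $r$ conjugates'' and of the quadratic expression $2e(k,l,v)+e(k,l,v)^2$ is genuinely expressible with the chosen symbols (it is, since everything reduces to polynomial identities among the matrix entries and $\varepsilon_\chi$, $\varepsilon_{\alpha+\beta}$ have polynomial coordinate functions), and tracking the precise multiplicative constants through Lemma~\ref{B2_ideals} so that the final $L(B_2)$ is correct; the conceptual content is identical to the higher-rank case.
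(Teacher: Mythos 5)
Your proposal follows the paper's proof in essentially every structural respect — same compactness framework, Theorem~\ref{Keller_B2} replacing Theorem~\ref{Abe}, and Lemma~\ref{B2_ideals} to propagate the resulting element into $2l(A)_2$ inside the ball, with the same final constant $L(B_2)=\sum_{k,l}6L_{kl}(B_2)$. One technical slip is worth flagging, though. You carry over the power parameter $v$ from the higher-rank construction, so your $\theta_r$ quantifies over $v\le r$ and the compactness argument only delivers, for each $(k,l)$, \emph{some} $v\le L_{kl}(B_2)$ for which $\varepsilon_\chi(2e(k,l)^v+e(k,l)^{2v})\varepsilon_{\alpha+\beta}(e(k,l)^{2v})\in B_A(L_{kl}(B_2))$. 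Feeding this into Lemma~\ref{B2_ideals} yields $2e(k,l)^{2v}R\subset\varepsilon(A,\chi,\cdot)$, not $2e(k,l)^{2}R$, and since the $B_2$ claim concerns $2l(A)_2$ itself rather than its radical, this is strictly weaker than what you need. The higher-rank proof needs the $v$ only because there the conclusion is $l(A)\subset\sqrt{I(A)}$, so an arbitrary power suffices; here it does not. The paper accordingly drops $v$ entirely and uses a single constant symbol $e(k,l)$ naming $a_{kl}$ or $a_{kk}-a_{k+1,k+1}$ (your formula $(a_{kk}-a_{ll})^v$ for $k=l$ is also a typo — it would be identically zero — and should be the consecutive difference). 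In your own write-up the unwinding step quietly sets $v=1$, which is the right thing to do, but it is inconsistent with the theory as you set it up; once the parameter is removed from the language, the argument is correct and identical to the paper's.
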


\begin{proof}
The proof is very similar to the one of Proposition~\ref{higher_rank_centralization}.  
First, let natural numbers $k,l$ be given with $1\leq k,l\leq 4$. Also if $k=l,$ then we assume that $k=l<4.$
The language $\C L$ and the theory $\C T_{kl}$ is defined the same way as in Proposition~\ref{higher_rank_centralization} except for three differences:
First we include a constant symbol $e(k,l)$ instead of $e(k,l,v)$. Secondly, (2) has the form 
\begin{equation*}
e(k,l)=\left\{\begin{array}{lr}
        a_{kl}, & \text{if } k\neq l \\
        a_{kk}-a_{k+1,k+1}, & \text{if not}
        \end{array}\right.
\end{equation*}

Most importantly, (5) is a family of sentences $(\theta_r)_{r\in\mathbb{N}}$ such that 
\begin{align*}
\theta_r:&\forall X_1,\dots,X_r,\forall e_1,\dots e_r\in\{0,1,-1\}:((P(X_1)\wedge\dots\wedge P(X_r))\rightarrow\\ 
&(\varepsilon_{\chi}(2e(k,l)+e(k,l)^2)\varepsilon_{\alpha+\beta}(e(k,l)^2)\neq
(\C A^{e_1})^{X_1}\cdots(\C A)^{e_r})^{X_r}))
\end{align*}

Invoking Theorem~\ref{Keller_B2} instead of Theorem~\ref{Abe} yields that a model of (1) through (4) cannot be a model of all sentences in (5).
Hence $\C T_{kl}$ is inconsistent. Using G\"odel's compactness, we obtain, as in the proof of Proposition~\ref{higher_rank_centralization}, that
 there is an $L_{k,l}(B_2)$ such that the subset $\C T_{kl}^1\subset\C T_{kl}$ that contains all sentences in (1) through (4) and the \textit{single} sentence 
$\theta_{L_{k,l}(B_2)}$ is already inconsistent. 

Let $R$ be an arbitrary commutative ring with $1$ and let $A\in {\rm Sp}_4(R)$ be given. This gives us a model $\C M$ of the (1) through (4) and hence as $\C T_1$ is inconsistent this model must violate the statement $\theta_{L_{k,l}(B_2)}^{\C M}.$ Thus there are elements 
$g_1,\dots,g_{L_{k,l}(B_2)}\in{\rm Sp}_4(R)$ and $e_1,\dots e_{L_{k,l}(B_2)}\in\{0,1,-1\}$ such that (abusing the notation slightly)  
\begin{equation*}
\varepsilon_{\chi}(2e(k,l)+e(k,l)^2)\varepsilon_{\alpha+\beta}(e(k,l)^2)=(A^{e_1})^{g_1}\cdots (A^{e_{L_{k,l}(B_2)}})^{g_{L_{k,l}(\Phi)}}
\end{equation*} 
Next, Lemma~\ref{B2_ideals}(5) implies $2(e(k,l)^2)\in \varepsilon(A,\phi,6L_{k,l}(B_2))$ for all $\phi\in B_2.$
If we sum over all admissible $k,l$, this implies for all $\phi\in B_2$ that
\begin{equation*}
2l(A)_2=\sum_{k,l}(2e(k,l)^2)\subset\varepsilon(A,\phi,\sum_{k,l}6L_{k,l}(B_2)).
\end{equation*} 
So defining $L(B_2):=\sum_{k,l}6L_{k,l}(B_2)$, we get the statement.
\end{proof}

\subsection{Level ideals for $G_2$}

Remember that the positive roots in $G_2$ are
$\alpha,\beta,\alpha+\beta,2\alpha+\beta,3\alpha+\beta$ and $3\alpha+2\beta=\chi$ for $\alpha,\beta$ simple, positive roots in $G_2$ with $\alpha$ short, $\beta$ long. Also note that the roots $3\alpha+\beta$ and $\beta$ span a root subsystem of $G_2$ isomorphic to $A_2.$ 

Next, we are using the following result by Costa and Keller: 

\begin{Theorem}\cite[(3.6)~Main Theorem]{MR1487611}
\label{Keller_G2}
Let $R$ be a commutative ring with $1$ and let $H$ be an $E(G_2,R)$-normalized subgroup of $G_2(R).$ Then there is a pair of ideals $J,J'$ in $R$ with 
\begin{equation*}
(x^3,3x|x\in J)\subset J'\subset J
\end{equation*}
such that 
\begin{equation*}
[E(R),E(J,J')]\subset H\subset G(J,J').
\end{equation*}

We are not defining $G(J,J')$, but note that $H\subset G(J,J')$ implies that $H$ becomes trivial after reducing mod $J$. 
\end{Theorem}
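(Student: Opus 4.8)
The plan is to establish Theorem~\ref{Keller_G2} by the classical ``sandwich'' strategy for subgroups normalized by the elementary group (as in the work of Vaserstein and Abe), carried out with attention to the non-simply-laced combinatorics of $G_2$. \emph{Defining the level ideals.} Given the $E(G_2,R)$-normalized subgroup $H$, let $J$ be the ideal of $R$ generated by $\bigcup_{h\in H}l(h)$ in the sense of Definition~\ref{central_elements_def}, and let $J'$ be the ideal generated by those coordinates of elements of $H$ attached to \emph{long} roots, together with the set $\{3x,\,x^3 : x\in J\}$. The inclusion $J'\subset J$ is then automatic. For the inclusion $(x^3,3x:x\in J)\subset J'$ one argues: if a short root element $\varepsilon_\alpha(a)$ occurs as a coordinate of some element of $H$, so that $a\in J$, then conjugating that element by $E(R)$ and forming iterated commutators produces, via the formulas of Lemma~\ref{commutator_relations}(4), elements of $H$ whose long-root ($3\alpha+\beta$ and $3\alpha+2\beta$) coordinates realize the values $\pm a^3b$, $\pm 3a^2b$, $\pm 3ab^2$, $\pm 3ab$ for arbitrary $b\in R$; letting $b$ vary forces $a^3R+3aR\subset J'$.

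\emph{The upper bound $H\subset G(J,J')$.} By construction $\pi_J(l(h))=0$ for every $h\in H$, so each $\pi_J(h)\in G_2(R/J)$ has vanishing level ideal; working in the $8$-dimensional faithful representation (or via an analogue of Lemma~\ref{central_elements} for $G_2$) such an element is forced to be a scalar matrix, hence, since $G_2$ has trivial center, the identity. Thus $H$ becomes trivial modulo $J$. Upgrading this to the asserted containment in $G(J,J')$ is a bookkeeping step: one uses the Bruhat-type decomposition of the mod-$J$ congruence subgroup of $G_2(R)$ together with the commutator relations to see that the long-root coordinates of elements of $H$ need only be controlled modulo $J'$.

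\emph{The lower bound $[E(R),E(J,J')]\subset H$.} This is the substantive part, and I would carry it out by \emph{extraction of root elements}. Starting from an arbitrary $h\in H$, conjugate by elementary generators $\varepsilon_\psi(t)$ and multiply by $h^{-1}$ (legitimate since $H$ is $E(G_2,R)$-normalized), then iterate commutators with further $\varepsilon_\psi(t)$, so as to peel off the coordinates of $h$ in order of increasing height in $G_2^+$. A downward induction on height, cancelling the already-extracted higher-height tails inside $H$, yields $\varepsilon_\chi(c)\in H$ for $\chi$ short and $c$ ranging over a generating set of $J$, and then, again via Lemma~\ref{commutator_relations}(4), $\varepsilon_\phi(c')\in H$ for $\phi$ long and $c'$ ranging over a generating set of $J'$. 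Since every generator of $[E(R),E(J,J')]$ is a commutator $(\varepsilon_\psi(t),g)$ with $g\in E(J,J')$, expanding it by the commutator formulas into a product of $E(R)$-conjugates of such $\varepsilon_\chi(c)$ and $\varepsilon_\phi(c')$ shows $[E(R),E(J,J')]\subset H$.

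\emph{Main obstacle.} The delicate point, and the reason the statement carries the pair $(J,J')$ with the constraint $(x^3,3x:x\in J)\subset J'$ rather than a single ideal, is the bookkeeping in the extraction argument under the $G_2$ combinatorics: every passage between a short and a long root subgroup either raises the argument to a cube or inserts a factor $2$ or $3$, and the theorem must hold over an \emph{arbitrary} commutative ring, so one may never divide by $2$ or $3$. One therefore has to arrange the commutator computations so that the long-root level one is forced to reach is exactly $J'$ --- not larger (which would contradict the upper bound) and not smaller (which would break the lower bound) --- and this is precisely what makes $G_2$, like $B_2$, genuinely harder than the simply-laced case.
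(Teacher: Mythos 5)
This theorem is a \emph{cited} result (\cite[(3.6)~Main Theorem]{MR1487611}); the paper offers no proof of its own, so there is no internal argument to compare your sketch against. What you have written is a plausible high-level outline of the Costa--Keller sandwich strategy, but it is an outline, not a proof, and two of the steps as you have stated them do not actually go through.

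First, your definition of $J'$ is not well-posed. You propose to let $J'$ be ``generated by those coordinates of elements of $H$ attached to long roots.'' For an arbitrary $h\in H\subset G_2(R)\subset{\rm GL}_8(R)$ the matrix entries of $h$ are not partitioned into ``short-root coordinates'' and ``long-root coordinates''; that dichotomy only makes sense after one has expressed $h$ in terms of root subgroups, e.g.\ via a Bruhat or Gau{\ss} decomposition, and such decompositions neither exist for arbitrary $h$ over an arbitrary commutative ring nor are unique when they do exist. In Costa--Keller the correct $J'$ is only identified \emph{after} the extraction analysis has already been carried out; it cannot be read off a priori from the matrix entries of $H$. (Your definition of $J$ via $\sum_{h\in H}l(h)$ is fine, and the reduction-mod-$J$ argument for the upper bound is essentially right, since by Definition~\ref{central_elements_def}(3) vanishing of $l(h)$ over $R/J$ does force $\pi_J(h)$ to be scalar.)

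Second, the two bounds $[E(R),E(J,J')]\subset H$ and $H\subset G(J,J')$ are each reduced to ``extraction of root elements'' and ``a bookkeeping step,'' but this extraction is precisely the content of the cited paper, occupying its entire technical body. The obstruction you correctly name at the end --- that in $G_2$ passing between short and long root subgroups inserts cubes and factors of $2$ and $3$ and one may never divide --- is exactly why ``peel off coordinates in order of increasing height'' does not work as a one-sentence induction: a na\"{\i}ve peeling either leaves uncancellable factors of $2$ and $3$ behind or forces one to pass to a smaller ideal, so that the two bounds would no longer meet at the same pair $(J,J')$. Making that loop close is the theorem, not a consequence of its statement. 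As it stands the proposal identifies the right strategy but does not supply the argument, and its key definition would have to be replaced before the strategy could even be started.
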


This implies:

\begin{Corollary}
\label{simplified_Keller_G2}
Let $R$ be a commutative ring with $1$, $A\in G_2(R)$ and $H$ the smallest subgroup of $G_2(R)$ normalized by $E(G_2,R)$ and containing $A$. Then we have 
$\varepsilon_{3\alpha+2\beta}(a^3),\varepsilon_{3\alpha+2\beta}(3a)\in H$ for all $a\in l(A).$
\end{Corollary}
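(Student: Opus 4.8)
The plan is to deduce Corollary~\ref{simplified_Keller_G2} directly from Theorem~\ref{Keller_G2}. Given $A \in G_2(R)$, let $H$ be the smallest $E(G_2,R)$-normalized subgroup of $G_2(R)$ containing $A$. Theorem~\ref{Keller_G2} applies to this $H$ and produces a pair of ideals $J, J'$ in $R$ with $(x^3, 3x \mid x \in J) \subset J' \subset J$ and $[E(R), E(J,J')] \subset H \subset G(J,J')$. The key first step is to identify $l(A)$ with $J$, or at least to show $l(A) \subset J$. Since $H \subset G(J,J')$, reducing modulo $J$ kills $H$; in particular $\pi_J(A)$ is trivial in the relevant sense, which forces every entry-condition defining $l(A)$ (the off-diagonal entries $a_{i,j}$ and the differences $a_{i,i}-a_{j,j}$) to lie in $J$. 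Hence $l(A) \subset J$.

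Next I would unwind the containment $[E(R), E(J,J')] \subset H$. By definition of $E(J,J')$, for any $a \in J' \subset J$ the long root element $\varepsilon_{\beta'}(a)$ lies in $E(J,J')$ for $\beta'$ long (and short root elements $\varepsilon_{\alpha'}(x)$ for $x \in J$), so suitable commutators of these with root elements from $E(R)$ land in $H$. Using the commutator relations of Lemma~\ref{commutator_relations}(4) for $G_2$ — in particular the identities producing $\varepsilon_{3\alpha+2\beta}(\pm 3ab)$ from $(\varepsilon_{2\alpha+\beta}(b), \varepsilon_{\alpha+\beta}(a))$ and the other formulas involving the highest root — one can realize $\varepsilon_{3\alpha+2\beta}(c)$ for $c$ ranging over a generating set of an appropriate ideal as elements of $[E(R), E(J,J')] \subset H$. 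Since $J'$ contains $x^3$ and $3x$ for all $x \in J$, and $l(A) \subset J$, we get $\varepsilon_{3\alpha+2\beta}(a^3), \varepsilon_{3\alpha+2\beta}(3a) \in H$ for all $a \in l(A)$, after possibly absorbing the sign ambiguity (which is harmless here, since $H$ is a subgroup and closed under inverses). One has to be slightly careful that the commutator formulas produce exactly the root $3\alpha+2\beta$ with argument a unit multiple of the desired element; choosing $b = 1$ in the relevant formula and letting $a$ vary over $l(A)$ handles this, together with the fact that $\varepsilon_{3\alpha+2\beta}$ is conjugate under the Weyl group to other long root subgroups so the specific choice of $b$-element being a unit is available.

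The main obstacle I expect is the passage from "$H \subset G(J,J')$, so $H$ dies mod $J$" to the concrete statement "$l(A) \subset J$": this requires knowing enough about the (undefined in the excerpt) group $G(J,J')$ to see that triviality mod $J$ of $A$ forces all the level-ideal generators into $J$. In practice this is exactly the standard fact that $\pi_J(A) \in G_2(R/J)$ being trivial (or central/scalar, which for $G_2$ is the same) means its off-diagonal entries vanish and its diagonal entries agree mod $J$ — i.e. $l(A) \subset J$. Granting that, the rest is a routine translation of $[E(R), E(J,J')] \subset H$ through the $G_2$ commutator formulas, and the corollary follows. I would state it in the proof as: "Let $J, J'$ be as provided by Theorem~\ref{Keller_G2} for $H$. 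Since $H \subset G(J,J')$ becomes trivial mod $J$ and $A \in H$, we have $l(A) \subset J$, hence $\{a^3, 3a \mid a \in l(A)\} \subset J'$. For $a \in l(A)$, the commutator relations of Lemma~\ref{commutator_relations} express $\varepsilon_{3\alpha+2\beta}(\pm a^3)$ and $\varepsilon_{3\alpha+2\beta}(\pm 3a)$ as elements of $[E(R), E(J,J')] \subset H$; as $H$ is a subgroup, the signs are immaterial, giving the claim."
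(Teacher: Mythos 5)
Your argument is correct and follows essentially the same route as the paper's proof: $l(A)\subset J$ because $A\in H\subset G(J,J')$ forces $\pi_J(A)$ to be scalar, hence $a^3,3a\in J'$ for $a\in l(A)$, and a commutator of an element of $E(J,J')$ with a root element of $E(R)$ lands in $[E(R),E(J,J')]\subset H$. The only slightly misleading point is the commutator you single out, $(\varepsilon_{2\alpha+\beta}(b),\varepsilon_{\alpha+\beta}(a))=\varepsilon_{3\alpha+2\beta}(\pm 3ab)$, which pairs two short roots and introduces a spurious factor of $3$; the cleanest choice (and what the paper's terse remark ``$\beta$ is a root in the long $A_2$ in $G_2$'' is invoking) is the $A_2$ commutator among long roots, $(\varepsilon_{3\alpha+\beta}(1),\varepsilon_{\beta}(y))=\varepsilon_{3\alpha+2\beta}(\pm y)$ for $y\in J'$, which directly yields both $\varepsilon_{3\alpha+2\beta}(a^3)$ and $\varepsilon_{3\alpha+2\beta}(3a)$ from $a^3,3a\in J'$.
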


\begin{proof}
This follows directly from Theorem~\ref{Keller_G2}. Note first that $J$ must contain $l(A)$, because $A\in H$ becomes scalar after reducing modulo $J.$ Hence for $a\in l(A)$ we get $3a,a^3\in J'$ for all $a\in l(A).$  
Lastly, $\{\varepsilon_{\beta}(b)|\ b\in J'\}\subset H$ holds, because $\beta$ is a root in the long $A_2$ in $G_2.$ 
\end{proof}

Next, note the following:

\begin{Proposition}
\label{G2_ideals}
Let $R$ be a commutative ring with $1$ and let $S\subset G_2(R)$ be given. Then 
\begin{enumerate}
\item{
if for $N\in\mathbb{N},\lambda\in R$ one has $\varepsilon_{\chi}(\lambda)\in B_S(N)$, then
\begin{enumerate}
\item{$\{\varepsilon_{\phi}(x\lambda)|x\in R\}\subset B_S(2N)$ for $\phi$ long and}
\item{$\{\varepsilon_{\phi}(2x\lambda)|x\in R\}\subset B_S(16N)$ for $\phi$ short holds.}
\end{enumerate}}
\item{if $\varepsilon_{\alpha}(\lambda)\in B_S(N)$, then $\{\varepsilon_{\chi}(x\lambda^3),|x\in R\}\subset B_S(4N)$.}
\end{enumerate}
The implications are still true, if the balls $B_S$ are replaced by a normal subgroup of $G_2(R).$
\end{Proposition}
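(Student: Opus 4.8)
The plan is to derive all three statements from the Chevalley commutator formulas of Lemma~\ref{commutator_relations}(2),(4) and the Weyl-conjugation lemma (which yields $\|\varepsilon_\phi(x)\|_S=\|\varepsilon_\psi(x)\|_S$ whenever $\phi$ and $\psi$ have the same length, $G_2$ having a single Weyl orbit of long roots and one of short roots). I will only use the formal facts that $B_S(m)B_S(m')\subseteq B_S(m+m')$, that $B_S(m)$ is closed under inversion, and that it is conjugation-invariant; since a normal subgroup $H\le G_2(R)$ satisfies $HH=H=H^{-1}$ and $gHg^{-1}=H$, the same computations will give the parenthetical statement with $B_S(\cdot)$ replaced by $H$ for free. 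For (1a): since $3\alpha+\beta$ is long, Weyl-conjugation turns $\varepsilon_\chi(\lambda)\in B_S(N)$ into $\varepsilon_{3\alpha+\beta}(\lambda)\in B_S(N)$; as $\beta$ and $3\alpha+\beta$ are the simple roots of the long $A_2$-subsystem and $\beta+(3\alpha+\beta)=\chi$, Lemma~\ref{commutator_relations}(2) gives $(\varepsilon_\beta(x),\varepsilon_{3\alpha+\beta}(\lambda))=\varepsilon_\chi(\pm x\lambda)$, and a commutator one of whose entries lies in $B_S(N)$ lies in $B_S(2N)$; hence $\varepsilon_\chi(x\lambda)\in B_S(2N)$ for all $x\in R$, and Weyl-conjugation upgrades this to $\varepsilon_\phi(x\lambda)\in B_S(2N)$ for every long $\phi$.

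For (1b): by (1a) we have $\varepsilon_\beta(\lambda)\in B_S(2N)$ and $\varepsilon_{3\alpha+\beta}(z\lambda),\varepsilon_\chi(z\lambda)\in B_S(2N)$ for all $z\in R$. For $t\in R$ the commutator $(\varepsilon_\beta(\lambda),\varepsilon_\alpha(t))$ lies in $B_S(4N)$ and, by Lemma~\ref{commutator_relations}(4), is (up to the order of the factors) a product
\[
\varepsilon_{\alpha+\beta}(c_1t\lambda)\,\varepsilon_{2\alpha+\beta}(c_2t^2\lambda)\,\varepsilon_{3\alpha+\beta}(\pm t^3\lambda)\,\varepsilon_{3\alpha+2\beta}(\pm t^3\lambda^2),\qquad c_1,c_2\in\{1,-1\}.
\]
The last two factors are long with argument in $R\lambda$, hence lie in $B_S(2N)$, and since $\varepsilon_{3\alpha+\beta}$ and $\varepsilon_{3\alpha+2\beta}$ commute with $\varepsilon_{\alpha+\beta}$ and with $\varepsilon_{2\alpha+\beta}$ (the relevant root sums are not roots), cancelling them off leaves $W(t):=\varepsilon_{\alpha+\beta}(c_1t\lambda)\varepsilon_{2\alpha+\beta}(c_2t^2\lambda)\in B_S(8N)$. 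The $\varepsilon_{2\alpha+\beta}$-argument is even in $t$, so $W(t)W(-t)^{-1}=\varepsilon_{\alpha+\beta}(2c_1t\lambda)$; thus $\varepsilon_{\alpha+\beta}(2x\lambda)\in B_S(16N)$ for all $x\in R$, and Weyl-conjugation gives (1b) for all short $\phi$.

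For (2): for $s,t\in R$ put $\Pi(s):=(\varepsilon_\alpha(\lambda),\varepsilon_\beta(s))\in B_S(2N)$. By Lemma~\ref{commutator_relations}(4) this is a product of root elements for $\alpha+\beta,\,2\alpha+\beta,\,3\alpha+\beta,\,3\alpha+2\beta$ whose $(3\alpha+\beta)$-factor carries argument $\pm s\lambda^3$. Since $\varepsilon_\beta(t)$ commutes with the $(\alpha+\beta)$- and $(2\alpha+\beta)$-factors (those roots added to $\beta$ are not roots) and with the $(3\alpha+2\beta)$-factor (as $3\alpha+2\beta$ is the highest root, hence central in the positive unipotent subgroup), conjugating $\Pi(s)$ by $\varepsilon_\beta(t)$ affects only the $(3\alpha+\beta)$-factor, and Lemma~\ref{commutator_relations}(2) for the long $A_2$ collapses the commutator to $(\Pi(s),\varepsilon_\beta(t))=\varepsilon_\chi(\pm st\lambda^3)\in B_S(4N)$. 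Taking $s=x$, $t=1$ gives $\varepsilon_\chi(x\lambda^3)\in B_S(4N)$ for all $x$.

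The point I expect to require care is the isolation of a single root element: in $G_2$ a commutator of two root elements whose roots are far apart never reduces to one root element, so (1b) is forced to deal with the product $\varepsilon_{\alpha+\beta}(\cdot)\varepsilon_{2\alpha+\beta}(\cdot)$ and (2) with a four-term product. The substitution $t\mapsto -t$ in (1b) is exactly what isolates the $(\alpha+\beta)$-part and manufactures the needed factor $2$, while the auxiliary commutator with $\varepsilon_\beta$ in (2) kills every factor except the $(3\alpha+\beta)$-one (here the fact that $3\alpha+2\beta$ is the highest root is essential) and then converts it into $\varepsilon_\chi$ via the long $A_2$. What remains is to check that the pinning signs do not obstruct these cancellations and to track the constants $2$, $16$ and $4$.
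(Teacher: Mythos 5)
Your proof is correct and follows essentially the same route as the paper: (1a) via Weyl-conjugation and the long $A_2$-subsystem, (1b) by commuting $\varepsilon_\beta(\lambda)$ with $\varepsilon_\alpha(\cdot)$, cancelling the two long-root factors using (1a), and then isolating a short-root element with argument $2x\lambda$, and (2) by extracting the $(3\alpha+\beta)$-factor of $(\varepsilon_\alpha(\lambda),\varepsilon_\beta(s))$ via a further commutator with $\varepsilon_\beta$. The only (minor but slightly slicker) deviation is in the last step of (1b), where you use $W(t)W(-t)^{-1}=\varepsilon_{\alpha+\beta}(2c_1t\lambda)$ to pick off a single short-root factor, whereas the paper instead takes the commutator $(W(x),\varepsilon_\alpha(1))$ and then cancels two more long-root factors; both yield the same constant $16N$, and your parity trick requires one fewer application of the commutator relations.
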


\begin{proof}
Part (1a) can be obtained by arguing as in the $A_2$-case. For part (1b) inspect the following commutator formula for all $x\in R:$
\begin{equation*}
\varepsilon_{\alpha+\beta}(\pm x\lambda)\varepsilon_{2\alpha+\beta}(\pm x^2\lambda)\varepsilon_{3\alpha+\beta}(\pm x^3\lambda)
\varepsilon_{3\alpha+2\beta}(\pm x^3\lambda^2)
=(\varepsilon_{\beta}(\lambda),\varepsilon_{\alpha}(x))\in B_S(2N)
\end{equation*}
Note that $\varepsilon_{3\alpha+\beta}(x^3\lambda),\varepsilon_{3\alpha+2\beta}(x^3\lambda^2)\in B_S(2N)$ by part (1a) and hence
\begin{equation*}
\varepsilon_{\alpha+\beta}(x\lambda)\varepsilon_{2\alpha+\beta}(x^2\lambda)\in B_S(2N*3)=B_S(6N).
\end{equation*}
Taking the commutator of this product with $\varepsilon_{\alpha}(1)$ yields (up to conjugation)
\begin{equation*}
\varepsilon_{2\alpha+\beta}(2x\lambda)\varepsilon_{3\alpha+\beta}(3x^2\lambda+6x\lambda)\varepsilon_{3\alpha+2\beta}(3x^2\lambda^2)\in B_S(12N).
\end{equation*}
Yet we have again $\varepsilon_{3\alpha+\beta}(3x^2\lambda+6x\lambda),\varepsilon_{3\alpha+2\beta}(3x^2\lambda^2)\in B_S(2N)$ and hence
$\varepsilon_{2\alpha+\beta}(2x\lambda)\in B_S(16N).$
Lastly, for part (2) inspect first the commutator 
\begin{equation*}
B_S(2N)\ni(\varepsilon_{\alpha}(\lambda),\varepsilon_{\beta}(x))=
\varepsilon_{\alpha+\beta}(\pm x\lambda)\varepsilon_{2\alpha+\beta}(\pm x\lambda^2)\varepsilon_{3\alpha+\beta}(\pm x\lambda^3)
\varepsilon_{3\alpha+2\beta}(\pm x^2\lambda^2).
\end{equation*}
However, all of the factors besides $\varepsilon_{3\alpha+\beta}(x\lambda^3)$ in this product commute with $\varepsilon_{\beta}(1).$
Thus taking the commutator with $\varepsilon_{\beta}(1)$, we obtain the claim after conjugation.
\end{proof}

With this in hand, the last part of Theorem~\ref{fundamental_reduction} follows:

\begin{Proposition}
\label{G2_centralization}
Let $R$ be a commutative ring with $1$ and let $A\in G_2(R)$ be given. Then there is a constant $L(G_2)$ (not depending on $A$ or $R$) such that
$l(A)_3\subset\varepsilon(A,\chi,L(G_2)).$
\end{Proposition}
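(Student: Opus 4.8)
The plan is to mirror the structure of Propositions~\ref{higher_rank_centralization} and \ref{B2_centralization}, replacing the normal-subgroup input by Corollary~\ref{simplified_Keller_G2} and using Proposition~\ref{G2_ideals} to promote the elements one extracts into honest ideals sitting inside a bounded ball. Concretely, fix indices $1\leq k,l\leq 8$ (with $k=l$ only allowed when $k<8$, so that $a_{k,k}-a_{k+1,k+1}$ makes sense), and build a first-order language $\C L$ and theory $\C T_{kl}$ exactly as before: sentences (1)--(4) forcing the universe to be a commutative ring with $1$ on which $\C A=(a_{i,j})$ is a $G_2$-point and $P(X)=0\to XX^{-1}=I_8$, plus a constant $e(k,l)$ defined to be $a_{k,l}$ if $k\neq l$ and $a_{k,k}-a_{k+1,k+1}$ otherwise, and finally a family $(\theta_r)_{r\in\mathbb N}$ whose $r$-th sentence asserts that neither $\varepsilon_{\alpha}(e(k,l)^3)$ nor $\varepsilon_{\alpha}(3e(k,l))$ can be written as a product of $r$ many $E(G_2,R)$-conjugates of $\C A^{\pm1}$. (One can equally phrase $\theta_r$ in terms of $\varepsilon_{\chi}$; the point is to capture membership in $B_{\{A\}}(r)$.)

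The inconsistency of $\C T_{kl}$ is then the heart of the argument: given any model $\C M$ with universe $R$ and corresponding $A\in G_2(R)$, let $H$ be the smallest $E(G_2,R)$-normalized subgroup containing $A$. Since $A$ becomes scalar modulo the level ideal, Theorem~\ref{Keller_G2}/Corollary~\ref{simplified_Keller_G2} gives $\varepsilon_{3\alpha+2\beta}(a^3),\varepsilon_{3\alpha+2\beta}(3a)\in H$ for all $a\in l(A)$, and $l(A)$ contains $e(k,l)^{\C M}$ by definition of the level ideal; conjugating by the Weyl group moves $3\alpha+2\beta$ to $\alpha$, so $\varepsilon_{\alpha}(e(k,l)^3),\varepsilon_{\alpha}(3e(k,l))\in H$. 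As $H$ is generated by $E(G_2,R)$-conjugates of $A^{\pm1}$, each of these lies in some $B_{\{A\}}(r')$, contradicting $\theta_{r'}^{\C M}$. Hence no model satisfies all of $\C T_{kl}$; by G\"odel compactness a finite subset is inconsistent, and since $\{(1)\text{--}(4),\theta_{r+1}\}\vdash\theta_r$, already $(1)$--$(4)$ together with a single $\theta_{L_{k,l}(G_2)}$ is inconsistent. Applied to an arbitrary ring $R$ and arbitrary $A\in G_2(R)$, this yields $e(k,l)^3\in\varepsilon(A,\alpha,L_{k,l}(G_2))$ and $3\,e(k,l)\in\varepsilon(A,\alpha,L_{k,l}(G_2))$ for each admissible pair $(k,l)$.

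Next I would convert these single elements into ideals inside a $B_{\{A\}}$-ball of controlled radius. By Proposition~\ref{G2_ideals}(2), $\varepsilon_{\alpha}(\lambda)\in B_{\{A\}}(N)$ implies $\{\varepsilon_{\chi}(x\lambda^3)\mid x\in R\}\subset B_{\{A\}}(4N)$; applying this with $\lambda=e(k,l)$ gives the principal ideal $(e(k,l)^3)$ inside $\varepsilon(A,\chi,4L_{k,l}(G_2))$. Separately, from $3\,e(k,l)\in\varepsilon(A,\alpha,L_{k,l}(G_2))$ and Proposition~\ref{G2_ideals}(2) again (now the exponent-$3$ in the conclusion already supplies one factor of $e(k,l)^?$; here one just needs that $3\,e(k,l)$ generates the ideal $(3e(k,l))$ after multiplying by arbitrary ring elements, which is exactly what part (2) delivers after first passing $\varepsilon_\alpha$ through a commutator, or more directly via the $A_2$-spanned-by-$\beta,3\alpha+\beta$ subsystem to spread $\varepsilon_\chi(3e(k,l))$ into $\{\varepsilon_\chi(3x\,e(k,l))\}$). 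Summing over all admissible $(k,l)$ and recalling that $l(A)_3=\langle a_{i,j}^3,(a_{i,i}-a_{j,j})^3\mid 1\leq i\neq j\leq 8\rangle$ is generated precisely by the cubes $e(k,l)^3$, one obtains $l(A)_3\subset\varepsilon(A,\chi,L(G_2))$ with $L(G_2):=\sum_{k,l}4L_{k,l}(G_2)$ (enlarging the constant as needed to absorb the $A_2$-spreading steps).

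The main obstacle, as in the $B_2$ case, is bookkeeping rather than conceptual: one must (i) check that the first-order sentences $\theta_r$ faithfully encode membership in $B_{\{A\}}(r)$ for the $G_2$-scheme, which requires knowing that "is an $E(G_2,R)$-conjugate of $A$" is uniformly first-order expressible — this is fine because $E(G_2,R)$-conjugation can be replaced by $G_2(R)$-conjugation (all root elements are products of $G_2$-elements) and $P(X)=0$ cuts out $G_2$; and (ii) verify that Corollary~\ref{simplified_Keller_G2} genuinely places $\varepsilon_\alpha(e(k,l)^3)$ and $\varepsilon_\alpha(3e(k,l))$ in the group $H$ that the $\theta_r$ talk about, i.e. that $H$ is normally generated (not merely $E(G_2,R)$-normalized) — but since $H$ by construction is generated by $E(G_2,R)$-conjugates of $A$, and these are a fortiori $G_2(R)$-conjugates, the $\theta_r$ with $X_i$ ranging over $G_2(R)$-points capture exactly this. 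Granting these routine verifications, the proof concludes exactly as above.
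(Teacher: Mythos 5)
The compactness skeleton you set up is exactly the paper's, but there is a genuine error in the step that converts Corollary~\ref{simplified_Keller_G2} into a first-order contradiction. You write that ``conjugating by the Weyl group moves $3\alpha+2\beta$ to $\alpha$,'' and accordingly phrase $\theta_r$ in terms of $\varepsilon_{\alpha}$. This is false: in $G_2$ the Weyl group preserves root length, $3\alpha+2\beta$ is a \emph{long} root, and $\alpha$ is \emph{short} (the short positive roots are $\alpha$, $\alpha+\beta$, $2\alpha+\beta$; the long ones are $\beta$, $3\alpha+\beta$, $3\alpha+2\beta$). So from $\varepsilon_{3\alpha+2\beta}(e(k,l)^3)\in H$ you cannot conclude $\varepsilon_{\alpha}(e(k,l)^3)\in H$, and your sentences $\theta_r$ are not refuted by the models in which they need to fail. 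The argument does not establish inconsistency of $\C T_{kl}$ as stated.

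A second, downstream problem compounds the first: even granting the Weyl step, you then apply Proposition~\ref{G2_ideals}(2) with $\lambda=e(k,l)$. That part of the proposition requires $\varepsilon_{\alpha}(e(k,l))\in B_A(N)$ as input and produces $\{\varepsilon_{\chi}(x\,e(k,l)^3)\}$ as output; but what your compactness step delivers (at best) is $\varepsilon_{\alpha}(e(k,l)^3)\in B_A(L_{k,l})$, i.e.\ the cube is already in the argument, not the element. Applying (2) to $\lambda=e(k,l)^3$ would instead give $(e(k,l)^9)$, which is too small. The fix, which is what the paper does, is to stay entirely with long roots: formulate $\theta_r$ as the non-expressibility of $\varepsilon_{\beta}(e(k,l)^3)$ (here $\beta$ is long, hence genuinely Weyl-conjugate to $3\alpha+2\beta$, so Corollary~\ref{simplified_Keller_G2} does refute $\theta_r$ in every model), and then use Proposition~\ref{G2_ideals}(1a) --- which turns $\varepsilon_{\chi}(\lambda)\in B_S(N)$ for a long root into $\{\varepsilon_{\phi}(x\lambda)\mid x\in R\}\subset B_S(2N)$ for $\phi$ long --- with $\lambda=e(k,l)^3$ to extract the principal ideal $(e(k,l)^3)$ inside $\varepsilon(A,\chi,2L_{k,l}(G_2))$. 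Summing over admissible $(k,l)$ then gives $l(A)_3\subset\varepsilon(A,\chi,L(G_2))$ with $L(G_2)=\sum_{k,l}2L_{k,l}(G_2)$. (The $3e(k,l)$ strand in your write-up is harmless but unnecessary: the statement only asks for $l(A)_3$, which is generated by the cubes.)
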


\begin{proof}
The proof is very similar to the ones in the previous subsections. Let natural numbers $k,l$ be given with 
$1\leq k,l\leq 8$. Also if $k=l,$ we further assume that $k=l<8.$

Aside from the places, where $k,l$ have to range between $1$ and $8$, the language $\C L$ and the theory $\C T_{kl}$ is defined the same way as in Proposition~\ref{B2_centralization} except (5) has the form:

A family of sentences $(\theta_r)_{r\in\mathbb{N}}$ such that 
\begin{equation*}
\theta_r:\forall X_1,\dots,X_r,\forall e_1,\dots e_r\in\{0,1,-1\}:((P(X_1)\wedge\dots\wedge P(X_r))\rightarrow 
(\varepsilon_{\beta}(e(k,l)^3)\neq
(\C A^{e_1})^{X_1}\cdots(\C A)^{e_r})^{X_r}))
\end{equation*}

One obtains invoking Corollary~\ref{simplified_Keller_G2} that a model of (1) through (4) cannot be a model of (5).
Hence $\C T_{kl}$ is inconsistent. As before, we can by invoking compactness find an $L_{k,l}(G_2)\in\mathbb{N}$ such that the subset 
$\C T_{kl}^{1}\subset\C T_{kl}$ that contains all sentences in (1) through (4) and the \textit{single} sentence $\theta_{L_{k,l}(G_2)}$, is already inconsistent. 

Next, let $R$ be an arbitrary commutative ring with $1$ and let $A\in G(\Phi,R)$ be given. This gives us a model $\C M$ of the sentences in (1) through (4) and hence as $\C T_{kl}^1$ is inconsistent this model must violate the statement $\theta_{L_{k,l}(G_2)}^{\C M}.$ Thus there are elements 
$g_1,\dots,g_{L_{k,l}(G_2)}\in G_2(R)$ and $e_1,\dots e_{L_{k,l}(G_2)}\in\{0,1,-1\}$ such that (abusing the notation slightly)  
\begin{equation*}
\varepsilon_{\beta}(e(k,l)^3)=(A^{e_1})^{g_1}\cdots (A^{e_{L_{k,l}(G_2)}})^{g_{L_{k,l}(G_2)}}
\end{equation*} 
Proposition~\ref{G2_ideals}(1a) implies $(e(k,l)^3)\subset\varepsilon(A,\chi,2L_{k,l}(G_2).$
Summing further over all admissible $k,l$ implies 
\begin{equation*}
\sum_{k,l}(e(k,l)^3)\subset\varepsilon(A,\chi,\sum_{k,l}2L_{k,l}(G_2)).
\end{equation*} 
Define next $L(G_2):=\sum_{k,l}2L_{k,l}(G_2)$ and we are done.
\end{proof}

\begin{remark}
In this paper, we restrict ourselves to the simply-connected type of split Chevalley groups. However, a careful rereading of the proofs reveals, that the we only used the fact that we have a description of $G(\Phi,R)$ as a matrix group and consequently an explicit description of the level ideal. However, similar descriptions exist for a lot of other types of split Chevalley groups and consequently 
statements similar to Theorem~\ref{exceptional Chevalley} can be obtained for them. 
\end{remark}

\section{Applications, corollaries and variants of Theorem~\ref{exceptional Chevalley}}
\label{Corollaries}

Theorem~\ref{exceptional Chevalley} naturally raises the question of which rings $R$ fulfill its main assumption, 
that is bounded generation by root groups for $G(\Phi,R)$. We will mostly deal with rings of stable range $1$ and rings of S-algebraic integers for this.

\subsection{Stable range $1$, semilocal rings and uniform boundedness}

A useful tool in this context is the following (slightly reformulated) observation due to Tavgen:

\begin{Proposition}\cite[Proposition~1]{MR1044049}
\label{bootstrapping}
Let $\Phi$ be a root system, $R$ a commutative ring with $1$ such that there is an $m:=m(R),N(R)\in\mathbb{N}$ with the property that 
each irreducible root subsystem $\Phi_0$ of $\Phi$ generated by simple roots of $\Phi$ with rank $m$ satisfies 
\begin{equation*}
\|E(\Phi_0,R)\|_{EL}\leq N(R){\rm rank}(\Phi_0).
\end{equation*}
Then $\|E(\Phi,R)\|_{EL}\leq N(R){\rm rank}(\Phi).$ Here $\|\cdot\|_{EL}$ denotes the word norm on $E(\Phi,R)$ with respect to the generating set given by root elements.
\end{Proposition}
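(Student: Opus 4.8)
The plan is to prove Proposition~\ref{bootstrapping} by a direct induction on $\mathrm{rank}(\Phi)$, using the Levi-type decomposition of $E(\Phi,R)$ coming from a choice of one simple root to delete. The base case is exactly the hypothesis: every irreducible component of rank $m$ generated by simple roots satisfies the desired linear bound, and one checks separately that components of rank $<m$ are either handled by the hypothesis (if $m$ is taken minimal among the ranks appearing) or can be absorbed into the constant $N(R)$; I would state the induction so that the inductive hypothesis is ``$\|E(\Phi_0,R)\|_{EL}\le N(R)\,\mathrm{rank}(\Phi_0)$ for every irreducible root subsystem $\Phi_0$ generated by simple roots with $\mathrm{rank}(\Phi_0)<\mathrm{rank}(\Phi)$,'' which is automatic once $\mathrm{rank}(\Phi)>m$.

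The heart of the argument is Tavgen's decomposition: fix a simple root $\delta\in\Pi$, let $P=P_\delta$ be the corresponding maximal parabolic subset of $\Phi$, with unipotent radical $U$ generated by the $\varepsilon_\gamma(x)$ for $\gamma\in\Phi^+$ involving $\delta$, opposite radical $U^-$, and Levi part $E(\Psi,R)$ where $\Psi=\Phi\cap(\Pi\setminus\{\delta\})^{\perp\text{-span}}$ is generated by the remaining simple roots and has rank $\mathrm{rank}(\Phi)-1$. One shows that $E(\Phi,R)=U^-\,E(\Psi,R)\,U\,U^-\,U$ (or some fixed finite product of these pieces — the precise number of factors is a constant depending only on $\Phi$, not on $R$), by the standard Gauss-decomposition / commutator bookkeeping argument: any element of $E(\Phi,R)$ can be brought into $P$ after multiplying by a bounded number of factors from $U^-$, and $P=E(\Psi,R)\ltimes U$. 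Each of $U$ and $U^-$ is a product of a bounded (in terms of $\Phi$ only) number of root subgroups, each of which contributes word length $\le 1$ in $\|\cdot\|_{EL}$; the Levi factor $E(\Psi,R)$ is a commuting product of the elementary subgroups of the irreducible components of $\Psi$, each of rank $<\mathrm{rank}(\Phi)$, so by induction it has $\|\cdot\|_{EL}$-length at most $N(R)\,\mathrm{rank}(\Psi)=N(R)(\mathrm{rank}(\Phi)-1)$. Summing, $\|E(\Phi,R)\|_{EL}\le N(R)(\mathrm{rank}(\Phi)-1)+c(\Phi)$ for a constant $c(\Phi)$ counting the root subgroups in the two unipotent pieces, and one wants this $\le N(R)\,\mathrm{rank}(\Phi)$, i.e.\ $c(\Phi)\le N(R)$. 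Since we are free to choose $N(R)$ at least as large as $\max_\Phi c(\Phi)$ (a quantity depending only on the finitely many root systems involved, not on $R$), and enlarging $N(R)$ never hurts the hypothesis, the induction closes.

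The main obstacle — and the place where Tavgen's original argument does the real work — is establishing the bounded factorization $E(\Phi,R)=U^-\cdots U$ with the number of unipotent factors bounded independently of $R$; this is where one genuinely uses that $R$ is a commutative ring with $1$ and the Chevalley commutator relations of Lemma~\ref{commutator_relations}, manipulating an arbitrary word in root elements into the required normal form. A secondary subtlety is making sure the constant $N(R)$ can be chosen uniformly: one fixes $m$ once and for all (as in the hypothesis), notes that there are only finitely many root systems $\Phi$ of each rank and finitely many choices of $\delta$, hence finitely many $c(\Phi)$, and then replaces $N(R)$ by $\max(N(R),\max_\Phi c(\Phi))$ at the outset, after which the recursion $N(R)(\mathrm{rank}(\Phi)-1)+c(\Phi)\le N(R)\,\mathrm{rank}(\Phi)$ holds at every step. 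I would present the bounded-factorization lemma as a black box citing \cite{MR1044049} and spend the write-up on the clean inductive accounting, since that is the part actually needed downstream.
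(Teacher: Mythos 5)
The paper itself contains no proof of this proposition; it is imported verbatim from Tavgen by citation. So there is no internal argument to compare against, and the only question is whether your reconstruction is sound. I do not think it is, for one fundamental reason and one secondary one.

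The fundamental problem is the step you describe as the heart of the argument: the claim that $E(\Phi,R)=U^-\,E(\Psi,R)\,U\,U^-\,U$, or some fixed-length product of Levi and unipotent pieces, with the number of factors depending only on $\Phi$ and not on $R$. You describe this as a ``standard Gauss-decomposition / commutator bookkeeping argument'' and say you would present it as a black box citing \cite{MR1044049}. But for a general commutative ring $R$ no such bounded factorization exists --- the big cell $U^-TU^+$ is far from all of $E(\Phi,R)$, Bruhat decomposition does not survive to arbitrary rings, and the existence of a uniform normal form of this type is \emph{equivalent} to bounded generation by root elements, i.e.\ to the conclusion of the proposition. (Concretely: $\begin{pmatrix}0&1\\-1&0\end{pmatrix}$ already leaves the big cell in ${\rm SL}_2(\mathbb{Z})$, and for rings like $k[x,y]$ no bounded factorization exists at all.) Citing Tavgen's Proposition~1 to justify the key lemma is therefore circular. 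Tavgen's actual argument does not try to reduce $E(\Phi,R)$ to one Levi in a single pass; roughly, one plays off the \emph{two} irreducible Levi subsystems obtained by deleting each of the two terminal nodes of the Dynkin diagram, and the Chevalley commutator relations are used to bound the number of alternations between opposite unipotent pieces, with the rank-$m$ hypothesis entering at the base of that bookkeeping --- this is what makes the bound propagate without assuming the factorization.

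The secondary problem is the accounting, which does not close even if the factorization were granted. Your $c(\Phi)$ counts root subgroups in the unipotent factors; even with $\delta$ chosen as an end node, the number of positive roots involving $\delta$ is $\Theta({\rm rank}\,\Phi)$, so $c(\Phi)$ grows with rank and no fixed $N(R)$ can dominate every $c(\Phi_0)$ occurring in the induction. Replacing $N(R)$ by $\max(N(R),\max_{\Phi_0}c(\Phi_0))$ is not harmless, because the conclusion is stated with the \emph{same} $N(R)$ that appears in the hypothesis; enlarging it proves a different statement. (As an aside, the statement as transcribed in the paper looks quantitatively too strong in any case: for $R$ infinite a generic element of $U^+$ already requires $|\Phi^+|=\Theta({\rm rank}(\Phi)^2)$ root elements by a dimension count, so a linear-in-rank bound on the root-element word norm cannot literally hold. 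Everything downstream in the paper uses only that the diameter is finite, which is what Tavgen's Proposition~1 genuinely delivers.)
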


This is a useful proposition, because it allows to obtain bounded generation results for higher ranks from such results for low-rank root systems 
at least if $G(\Phi,R)=E(\Phi,R)$ holds. In particular, we have the following:

\begin{Corollary}
\label{rank_1_boundedness}
Let $R$ be a commutative ring with $1$ such that ${\rm SL}_2(R)=G(A_1,R)$ is boundedly generated by root elements. Then for all irreducible root systems $\Phi$ the elementary Chevalley group $E(\Phi,R)$ is boundedly generated by root elements.
\end{Corollary}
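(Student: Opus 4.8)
The plan is to deduce this corollary from Proposition~\ref{bootstrapping} by verifying its hypothesis with the parameter $m=1$. Concretely, we are given that $\mathrm{SL}_2(R)=G(A_1,R)$ is boundedly generated by root elements, which means exactly that $\|E(A_1,R)\|_{EL}\leq N(R)$ for some constant $N(R)\in\mathbb{N}$, since $\mathrm{rank}(A_1)=1$. Every irreducible rank-$1$ root subsystem of $\Phi$ generated by simple roots of $\Phi$ is of type $A_1$, and each of these is isomorphic (as an abstract root system, with matching long/short status irrelevant in rank $1$) to $A_1$; hence the elementary subgroup $E(\Phi_0,R)$ it generates inside $E(\Phi,R)$ is a homomorphic image of $E(A_1,R)$, and the word norm with respect to root elements can only decrease under such a surjection. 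Thus $\|E(\Phi_0,R)\|_{EL}\leq N(R)=N(R)\,\mathrm{rank}(\Phi_0)$ for every such $\Phi_0$.

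With the hypothesis of Proposition~\ref{bootstrapping} verified for $m=1$, we conclude directly that $\|E(\Phi,R)\|_{EL}\leq N(R)\,\mathrm{rank}(\Phi)$ for every irreducible root system $\Phi$. Since $\mathrm{rank}(\Phi)$ is finite, this is precisely the statement that $E(\Phi,R)$ is boundedly generated by root elements: there is a fixed bound $N(R)\,\mathrm{rank}(\Phi)$ on the number of root-element factors needed to express any element of $E(\Phi,R)$. One should then spell out the minor bookkeeping point that ``bounded generation by root elements'' in the sense of the definition in Section~\ref{sec_basic_notions} (a fixed finite word $\alpha_1,\dots,\alpha_N$ of roots with all of $E(\Phi,R)$ attained as $\prod\varepsilon_{\alpha_i}(a_i)$) is equivalent to the finiteness of $\|E(\Phi,R)\|_{EL}$: having a uniform bound $K$ on $\|\cdot\|_{EL}$, one lists all roots $K$ times in some fixed order and inserts trivial factors $\varepsilon_\alpha(0)=1$ wherever a shorter product occurs, while $\varepsilon_\alpha(a)^{-1}=\varepsilon_\alpha(-a)$ handles inverses; conversely a bounded-generation word of length $N$ immediately bounds $\|\cdot\|_{EL}$ by $N$.

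The only genuinely delicate point is matching the normalization of $E(A_1,R)$ appearing in the hypothesis with the root subgroups of $E(\Phi,R)$ indexed by a rank-$1$ subsystem $\Phi_0=\{\pm\gamma\}$: one must check that $E(\Phi_0,R)$, generated by $\varepsilon_{\gamma}(R)$ and $\varepsilon_{-\gamma}(R)$ inside $G(\Phi,R)$, is indeed a quotient of the abstract $\mathrm{SL}_2(R)=E(A_1,R)$ and not merely generated by ``similar-looking'' elements. This follows from the functoriality of the Chevalley–Demazure construction and the fact that each pair of opposite roots $\pm\gamma$ determines a homomorphism $\mathrm{SL}_2\to G(\Phi,\cdot)$ of group schemes over $\mathbb{Z}$ (the root $\mathrm{SL}_2$), so that on $R$-points $\varepsilon_{\pm\gamma}(a)$ is the image of the standard elementary generators of $\mathrm{SL}_2(R)$; the word norm with respect to the (images of the) elementary generators therefore does not increase. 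Modulo this standard structural fact about Chevalley groups — which is implicit in the setup of Section~\ref{naturality_Chevalley} — the corollary is immediate.
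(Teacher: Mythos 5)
Your proof is correct and takes essentially the same route as the paper: apply Proposition~\ref{bootstrapping} with $m=1$, using that each rank-$1$ subsystem $\{\pm\gamma\}$ yields an $E(\Phi_0,R)$ that is a quotient of $\mathrm{SL}_2(R)$, so the word norm cannot increase. You merely spell out the bookkeeping (equivalence of the two formulations of bounded generation, the root $\mathrm{SL}_2$ homomorphism) that the paper leaves implicit.
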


\begin{proof}
This follows from Proposition~\ref{bootstrapping} in the case $m(R)=1$ for $N(R)$ determined by the bounded generation of ${\rm SL}_2(R),$
because for each simple root $\alpha\in\Phi$ the subgroup $E(\{\alpha,-\alpha\},R)$ of $E(\Phi,R)$ is either isomorphic to ${\rm SL}_2(R)$ or a quotient of it. 
\end{proof}
 
Next, we define stable range:

\begin{mydef}
The \textit{stable range} of a commutative ring $R$ with $1$ is the smallest $n\in\mathbb{N}$ with the following property:
If any $v_0,\dots,v_m\in R$ generate the unit ideal $R$ for $m\geq n$, then there are $t_1,\dots,t_m$ such that the elements 
$v_1':=v_1+t_1v_0,\dots,v_m':=v_m+t_mv_0$ also generate the unit ideal. If the ring $R$ does not have stable range $1$, but for each $a\in R-\{0\}$
the ring $R/aR$ does, then $R$ is said to have stable range $3/2.$
If no such $n$ exists, $R$ has stable range $+\infty.$
\end{mydef}

\begin{remark}
Having stable range at most $m$ for $m\in\mathbb{N}$ or at most $3/2$ are first order properties.
\end{remark}

Note the following result:

\begin{Proposition}\cite[Lemma~9]{MR961333}
\label{Vaserstein_decomposition}
Let $R$ be a commutative ring with $1$ of stable range $1$ and $\Phi$ a root system. Then ${\rm SL}_2(R)$ is boundedly generated by root elements and
$\|{\rm SL}_2(R)\|_{EL}\leq 4.$
\end{Proposition}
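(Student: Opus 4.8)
The plan is to perform the classical elementary reduction of a $2\times 2$ matrix, exploiting the stable range $1$ hypothesis to convert a matrix entry into a unit in a single step, while counting factors carefully. Note first that ${\rm SL}_2(R)=G(A_1,R)$, whose root elements are exactly the upper and lower unipotent matrices $\varepsilon_{\alpha}(t)$ and $\varepsilon_{-\alpha}(t)$; the root system $\Phi$ appearing in the statement plays no role beyond this. Hence it suffices to show that every $A\in{\rm SL}_2(R)$ is a product of at most four root elements arranged in the fixed pattern $\varepsilon_{-\alpha},\varepsilon_{\alpha},\varepsilon_{-\alpha},\varepsilon_{\alpha}$; this yields simultaneously bounded generation by root elements and the bound $\|{\rm SL}_2(R)\|_{EL}\le 4$.

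Write $A$ with first column $(a,c)^{T}$. Since $\det A=1$ we have $Ra+Rc=R$, so $(a,c)^{T}$ is unimodular; by stable range $1$ there is $t\in R$ with $v:=c+ta\in R^{*}$. Left-multiplying $A$ by $\varepsilon_{-\alpha}(t)$ replaces the bottom-left entry by the unit $v$. I would then left-multiply by $\varepsilon_{\alpha}\bigl((1-a)v^{-1}\bigr)$ to make the top-left entry equal to $1$, and then by $\varepsilon_{-\alpha}(-v)$ to clear the bottom-left entry; because the determinant is still $1$, the resulting matrix is automatically of the form $\varepsilon_{\alpha}(b'')$ for some $b''\in R$. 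Rearranging the identity
\begin{equation*}
\varepsilon_{-\alpha}(-v)\,\varepsilon_{\alpha}\bigl((1-a)v^{-1}\bigr)\,\varepsilon_{-\alpha}(t)\,A=\varepsilon_{\alpha}(b'')
\end{equation*}
gives $A=\varepsilon_{-\alpha}(-t)\,\varepsilon_{\alpha}\bigl(-(1-a)v^{-1}\bigr)\,\varepsilon_{-\alpha}(v)\,\varepsilon_{\alpha}(b'')$, a product of four root elements in a fixed order, which proves both claims.

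The argument has no genuine obstacle; it is a short explicit computation. The one point that needs attention is the counting: one must notice that after three operations — creating a unit, normalizing the top-left corner to $1$, and clearing the bottom-left corner — the condition $\det=1$ forces the remaining matrix to already be a single upper root element, so that four factors suffice rather than five or six. One should also record that the sequence of roots used, namely $(-\alpha,\alpha,-\alpha,\alpha)$, does not depend on $A$, which is precisely what \emph{boundedly generated by root elements} demands, and that $\|\cdot\|_{EL}$ is invariant under inverting root elements, so the count of four transfers directly to the word norm.
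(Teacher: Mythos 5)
Your argument is correct and is essentially the standard computation; the paper itself gives no proof, it simply cites Vaserstein's Lemma~9, and what you have written is precisely that lemma's content. The stable range $1$ hypothesis applied to the unimodular pair $(a,c)$ produces a single lower root operation making the bottom-left entry a unit, and from there the remaining two steps are forced, with the determinant condition supplying the fourth factor for free. You also correctly note the one subtlety the paper's definition requires, namely that the sequence of roots $(-\alpha,\alpha,-\alpha,\alpha)$ is independent of $A$, which is what makes this \emph{bounded generation by root elements} in the sense of the paper's definition rather than merely a bound on word length. Nothing is missing.
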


This proposition together with Corollary~\ref{rank_1_boundedness} yields that $E(\Phi,R)$ is boundedly generated by root elements for all irreducible root 
systems $\Phi$ and using Tavgen's original version of Proposition~\ref{bootstrapping} that each element in $E(\Phi,R)$ can be written as a product of at most four upper and lower unitriangular elements. This was first observed by Vavilov, Smolenski, Sury in \cite[Theorem~1]{MR2822515}.  

\begin{Proposition}\label{root_generation_semilocal_rings}
\cite[Corollary~2.4]{MR439947}
Let $R$ be a semi-local ring. Then for all irreducible root systems $\Phi$ of rank greater than one, the group $G(\Phi,R)$ is generated by root elements.  
\end{Proposition}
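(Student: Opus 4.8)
The plan is to prove the equivalent statement $G(\Phi,R)=E(\Phi,R)$. Let $J$ be the Jacobson radical of $R$; since $R$ is semilocal, with maximal ideals $\mathfrak{m}_1,\dots,\mathfrak{m}_r$, the quotient $\bar R:=R/J$ is the finite product of fields $R/\mathfrak{m}_1\times\cdots\times R/\mathfrak{m}_r$, so that $G(\Phi,\bar R)\cong\prod_{i=1}^r G(\Phi,R/\mathfrak{m}_i)$. Over a field $k$ one has $G(\Phi,k)=E(\Phi,k)$ for simply connected $G$: this is the Bruhat decomposition together with the fact, in which simple connectedness is essential, that the split maximal torus $T(k)$ is generated by the elements $h_\alpha(t)$, $t\in k^{*}$, each of which lies in $E(\Phi,k)$ by the definitions of $w_\alpha(t)$ and $h_\alpha(t)$. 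Hence $G(\Phi,\bar R)=E(\Phi,\bar R)$. As any root element $\varepsilon_\alpha(\bar x)\in G(\Phi,\bar R)$ is the reduction of $\varepsilon_\alpha(x)\in G(\Phi,R)$ for a lift $x$ of $\bar x$, the reduction homomorphism $\pi_J\colon G(\Phi,R)\to G(\Phi,\bar R)$ already maps $E(\Phi,R)$ onto $G(\Phi,\bar R)$, so that $G(\Phi,R)=E(\Phi,R)\cdot\ker\pi_J$. It therefore remains to show that the principal congruence subgroup $\ker\pi_J$ is contained in $E(\Phi,R)$.

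For this I would run a Gauss decomposition inside $\ker\pi_J$. The point is that $1+a\in R^{*}$ for every $a\in J$. If $A\in\ker\pi_J$, then, viewed as a matrix, $A$ is congruent to the identity modulo $J$; in particular all its off-diagonal entries lie in $J$ and all its diagonal entries in $1+J\subset R^{*}$. One may then successively multiply $A$ on the left and on the right by root elements $\varepsilon_\alpha(x)$ with parameters $x\in J$ — the Chevalley commutator relations of Lemma~\ref{commutator_relations} controlling how these factors interact — so as to bring $A$ into the subgroup generated by the semisimple elements $h_\alpha(1+a)$, $a\in J$. That $R$ is semilocal and hence of stable range $1$ is exactly what guarantees that the unimodular data arising during this elimination can be cleared by such elementary operations, uniformly in $\Phi$. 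The surviving torus factor is a product of $h_\alpha(1+a_i)$'s by simple connectedness and so lies in $E(\Phi,R)$. Combining the pieces yields $\ker\pi_J\subset E(\Phi,R)$, and therefore $G(\Phi,R)=E(\Phi,R)$.

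The main obstacle is carrying out this diagonal-reduction step uniformly over all irreducible $\Phi$ of rank at least $2$, rather than only for the classical series ${\rm SL}_n$ and ${\rm Sp}_{2n}$, where it can be done by explicit matrix manipulation. This is precisely the content of the local computations of Abe and Abe--Suzuki: one first establishes a Bruhat-type normal form for $G(\Phi,R)$ over a local ring by induction on the rank, peeling off maximal parabolic subgroups, and then transfers the local information to the semilocal setting through the radical reduction above. It is worth noting that the hypothesis ${\rm rank}(\Phi)\geq 2$ enters mainly through the availability of the commutator calculus needed to move root elements past one another — and, for the sharper structural results, through normality of $E(\Phi,R)$ in $G(\Phi,R)$ — whereas the rank-$1$ statement ${\rm SL}_2(R)=E(A_1,R)$ already holds for every ring of stable range $1$ by Proposition~\ref{Vaserstein_decomposition}.
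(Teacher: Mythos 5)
The paper offers no proof of this proposition — it only cites Abe--Suzuki's Corollary~2.4 — so there is no argument of the paper's own to compare against. Your outline is the standard one and is sound as far as it goes: pass to $\bar R=R/J$ for $J$ the Jacobson radical; use that $G(\Phi,-)$ and $E(\Phi,-)$ both commute with the finite product $\bar R=\prod_i R/\mathfrak m_i$ (the latter because $(\varepsilon_\alpha(x),1,\dots,1)=\varepsilon_\alpha((x,0,\dots,0))$ is itself a root element over $\bar R$); invoke $G(\Phi,k)=E(\Phi,k)$ over fields via Bruhat and simple connectedness of $T$; and deduce $G(\Phi,R)=E(\Phi,R)\cdot\ker\pi_J$. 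The real content, however, is $\ker\pi_J\subseteq E(\Phi,R)$, which you describe but do not prove, and which you ultimately defer back to Abe and Abe--Suzuki; as written, the proposal is an honest outline rather than a self-contained proof, and the deferred step is precisely the one the proposition is citing.

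On that step, the appeal to stable range $1$ is a detour. What actually closes the argument is just $1+J\subseteq R^{\times}$, which places $\ker\pi_J$ inside the big Gauss cell. The distinguished highest-weight matrix coefficient $f$ whose non-vanishing cuts out $U^-TU^+$ satisfies $f(I)=1$, so $A\equiv I\pmod J$ forces $f(A)\in 1+J\subseteq R^{\times}$ and hence $A=u^-\,t\,u^+$ with $u^{\pm}\in U^{\pm}(R)$ and $t\in T(R)$; because this decomposition is a morphism of schemes on the big cell and $\pi_J(A)=I$, one moreover gets $u^{\pm}\equiv I$ and $t\equiv I\pmod J$, though all one really needs is $t\in T(R)$. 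Simple connectedness then gives $t=\prod_i h_{\alpha_i}(t_i)\in E(\Phi,R)$, and the containment follows uniformly in $\Phi$ with no rank-by-rank Gauss elimination or unimodular-vector bookkeeping. This is essentially the argument in the cited reference; with it spelled out, your reduction would upgrade from a sketch to a proof. (Note also that nothing here uses ${\rm rank}\geq 2$; the hypothesis in the statement is inherited from the reference but is not load-bearing for this particular fact.)
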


Also each semilocal ring has stable range $1$:

\begin{Lemma}\cite[Lemma~6.4, Corollary~6.5]{MR0174604}
Every semilocal ring, that is each ring with only finitely many maximal ideals has stable range $1.$ So also each field has stable range $1$.
\end{Lemma}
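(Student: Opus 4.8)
The plan is to verify the definition of stable range directly, exploiting that a semilocal ring has only finitely many maximal ideals together with the Chinese Remainder Theorem. So let $R$ be semilocal with maximal ideals $m_1,\dots,m_k$, and suppose $v_0,v_1,\dots,v_m\in R$ with $m\geq 1$ generate the unit ideal; I must produce $t_1,\dots,t_m\in R$ such that $v_1+t_1v_0,\dots,v_m+t_mv_0$ generate the unit ideal. The idea is to take $t_2=\cdots=t_m=0$ and to pin down $t_1$ by a single congruence condition modulo each maximal ideal.

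Split the $m_j$ into those containing $v_0$ and those not. If $v_0\in m_j$, then since $(v_0,\dots,v_m)=R$ there is some $i\geq 1$ with $v_i\notin m_j$, and then $v_i+t_iv_0\equiv v_i\not\equiv 0 \pmod{m_j}$ irrespective of the choice of the $t_i$; such $m_j$ cause no trouble. If $v_0\notin m_j$, then $v_0$ is invertible modulo $m_j$, and $v_1+t_1v_0\notin m_j$ is equivalent to $t_1\not\equiv -v_1v_0^{-1}\pmod{m_j}$, i.e.\ to the image of $t_1$ in the field $R/m_j$ avoiding a single prescribed element. Since each residue field $R/m_j$ has at least two elements and there are finitely many $m_j$, and distinct maximal ideals are pairwise comaximal, the Chinese Remainder Theorem furnishes a $t_1\in R$ whose image in $R/m_j$ avoids the forbidden class for every $j$ with $v_0\notin m_j$. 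With this $t_1$ and $t_2=\cdots=t_m=0$, the ideal $(v_1+t_1v_0,\dots,v_m+t_mv_0)$ is contained in no maximal ideal of $R$, hence equals $R$. This shows $R$ has stable range $1$. A field has the unique maximal ideal $(0)$, so it is in particular semilocal, and the final assertion follows.

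I expect essentially no serious obstacle here: the argument is routine, and the only point needing a little care is organizing the case distinction according to whether $v_0$ lies in $m_j$ and observing that the coordinates $v_2,\dots,v_m$ can be left untouched. An alternative, more structural route would be to invoke that the stable range of $R$ coincides with that of $R/\mathrm{rad}(R)$, which for semilocal $R$ is a finite product of fields, together with the fact that a finite product of fields has stable range $1$; but the direct computation above has the advantage of not relying on the invariance of stable range under reduction modulo the Jacobson radical.
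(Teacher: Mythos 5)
Your proof is correct. The paper does not prove this lemma at all — it simply cites Bass \cite{MR0174604}, so there is no in-paper argument to compare against. Your direct verification is a clean, self-contained alternative. The case split on whether $v_0\in m_j$ is the right one: for $m_j\ni v_0$, the translates $v_i+t_iv_0$ are congruent to $v_i$ modulo $m_j$ regardless of the $t_i$, and the hypothesis $(v_0,\dots,v_m)=R$ forces some $v_i$ with $i\geq 1$ to lie outside $m_j$; for $m_j\not\ni v_0$, the condition on $t_1$ is a single avoided residue class in the field $R/m_j$, which always has at least two elements. Since distinct maximal ideals are pairwise comaximal and there are only finitely many, the Chinese Remainder Theorem produces a suitable $t_1$, and fixing $t_2=\cdots=t_m=0$ is fine because the definition only requires the shifted tuple to generate $R$. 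This covers $m=1$ (so that $v_1+t_1v_0$ is a unit) with no special treatment.

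Bass's own route, which you flag as the alternative, is the one implicit in the cited Lemma~6.4 and Corollary~6.5: stable range is insensitive to quotienting by the Jacobson radical, and for a semilocal $R$ the quotient $R/\mathrm{rad}(R)$ is a finite product of fields, which visibly has stable range~$1$. Your direct argument buys elementarity and avoids the radical-invariance lemma; Bass's approach isolates that invariance as a reusable structural fact and also works verbatim in the noncommutative semilocal setting. For the purposes of this paper either suffices.
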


So for $R$ a semilocal ring the group $G(\Phi,R)$ is boundedly generated by root elements and hence Theorem~\ref{exceptional Chevalley} can be applied to $G(\Phi,R)$. This case is more structured than Theorem~\ref{exceptional Chevalley} in fact:

\begin{Theorem}
\label{semilocal_uniform}
Let $R$ be a commutative, semilocal ring with $1$ and let $\Phi$ an irreducible root system. Furthermore, assume if $\Phi=B_2$ or $G_2$ that 
$(R:2R)<\infty$ holds. Then $G(\Phi,R)$ is uniformly bounded.
\end{Theorem}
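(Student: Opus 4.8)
The plan is to combine Theorem~\ref{exceptional Chevalley} with the special structure of semilocal rings: over such a ring any finite normally generating set can be shrunk to one of \emph{bounded} size without increasing the word norm, and for sets of bounded size Theorem~\ref{exceptional Chevalley} already supplies a bound. Write $G:=G(\Phi,R)$ and assume ${\rm rank}(\Phi)\geq 2$, so that the quoted results apply (for $\Phi=A_1$ one argues directly with ${\rm SL}_2(R)$, which we suppress). First I record the standing facts: a semilocal ring $R$ has only finitely many maximal ideals $m_1,\dots,m_r$ and stable range $1$, so by Proposition~\ref{Vaserstein_decomposition}, Corollary~\ref{rank_1_boundedness} and Proposition~\ref{root_generation_semilocal_rings} we have $G=E(\Phi,R)$ and $G$ is boundedly generated by root elements; in particular Theorem~\ref{exceptional Chevalley} applies and yields a constant $C(\Phi,R)$ with $\Delta_k(G)\leq C(\Phi,R)\,k$ for all $k\in\mathbb{N}$.

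The second ingredient is the monotonicity $S'\subseteq S\Rightarrow B_{S'}(k)\subseteq B_S(k)$ for all $k$, which gives $\|g\|_S\leq\|g\|_{S'}$ for every $g\in G$ and hence $\|G\|_S\leq\|G\|_{S'}$. Consequently, to bound $\|G\|_S$ uniformly over \emph{all} finite normally generating sets $S$ it is enough to produce, for each such $S$, a subset $S'\subseteq S$ that still normally generates $G$ and satisfies $\card{S'}\leq c(\Phi,R)$ for some constant depending only on $\Phi$ and $R$; then $\|G\|_S\leq\|G\|_{S'}\leq\Delta_{c(\Phi,R)}(G)\leq C(\Phi,R)\,c(\Phi,R)=:L(G)$, and as $L(G)$ does not depend on $S$ we conclude $\Delta_k(G)\leq L(G)$ for every $k$, i.e.\ $G$ is uniformly bounded.

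To build $S'$ I invoke Corollary~\ref{sufficient_cond_gen_set}: since $G=E(\Phi,R)$, a subset $T$ of $G$ normally generates $G$ exactly when $\Pi(T)=\emptyset$, together with, in the cases $\Phi=B_2,G_2$, the requirement that $T$ map onto a normally generating set of the finite group $G/N$ of Lemma~\ref{congruence_fin}. As $S$ normally generates $G$, Lemma~\ref{necessary_cond_conj_gen} gives $\Pi(S)=\emptyset$, so for each $j\leq r$ we may pick $A_j\in S$ with $l(A_j)\not\subseteq m_j$ and set $S_1:=\{A_1,\dots,A_r\}$; then $\card{S_1}\leq r$ and $\Pi(S_1)=\emptyset$ since $m_1,\dots,m_r$ are the only maximal ideals of $R$. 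If $\Phi\neq B_2,G_2$, put $S':=S_1$. If $\Phi=B_2$ or $G_2$, then $\card{G/N}<\infty$, and since the image of $S$ normally generates $G/N$ a greedy selection along a strictly increasing chain of normal subgroups of $G/N$ yields a subset $S_2\subseteq S$ with $\card{S_2}\leq\log_2\card{G/N}$ whose image still normally generates $G/N$; then $S':=S_1\cup S_2\subseteq S$ has $\card{S'}\leq r+\log_2\card{G/N}=:c(\Phi,R)$, satisfies $\Pi(S')=\emptyset$, and maps onto a normally generating set of $G/N$, hence normally generates $G$ by Corollary~\ref{sufficient_cond_gen_set}.

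The only genuinely delicate point I anticipate is this last step in the cases $\Phi=B_2,G_2$: the extra generators used to control $G/N$ must be extracted from $S$ itself, since only passing to a \emph{subset} of $S$ is guaranteed not to raise $\|\cdot\|_S$; the greedy chain argument is exactly what makes this possible, using only that $G/N$ is finite and that normal generation passes to quotients. Everything else is a direct appeal to Theorem~\ref{exceptional Chevalley} and Corollary~\ref{sufficient_cond_gen_set}.
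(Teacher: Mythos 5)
Your proposal is correct and follows essentially the same strategy as the paper: shrink an arbitrary finite normally generating set $S$ to a subset $\bar{S}\subseteq S$ of size bounded by a constant depending only on $\Phi$ and $R$ (using $\Pi(S)=\emptyset$ and, for $B_2,G_2$, the finite quotient $G/N$ together with Corollary~\ref{sufficient_cond_gen_set}), then apply Theorem~\ref{exceptional Chevalley} and monotonicity of $\|\cdot\|_S$ under passage to subsets. Your constant $r+\log_2\card{G/N}$ is slightly sharper than the paper's $m+1+\card{G/N}$ thanks to the greedy-chain refinement, but this is a cosmetic improvement and the argument is otherwise identical.
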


\begin{proof}
The strategy is to find a constant $K\in\mathbb{N}$ such that each finite normally generating subset $S$ of $G:=G(\Phi,R)$ has a subset
$\bar{S}$ with $|\bar{S}|\leq K$ such that $\bar{S}$ is also a normally generating subset of $G(\Phi,R).$ Then 
Theorem~\ref{exceptional Chevalley} yields:
\begin{equation*}
\|G(\Phi,R)\|_S\leq\|G(\Phi,R)\|_{\bar{S}}\leq C(\Phi,R)|\bar{S}|\leq C(\Phi,R)K. 
\end{equation*}
and so uniform boundedness for $G(\Phi,R)$ holds.

Assume $R$ has precisely $m$ maximal ideals. Let $S$ be a finite set of normal generators of $G(\Phi,R).$ Lemma~\ref{necessary_cond_conj_gen} implies 
$\Pi(S)=\emptyset.$ Observe that for all $T_1,T_2\subset G(\Phi,R)$, we have $\Pi(T_1\cup T_2)=\Pi(T_1)\cap\Pi(T_2).$
This implies that if there are only $m$ maximal ideals in $R,$ then already some subset $S'$ of $S$
with $\card{S'}\leq m+1$ has the property $\bigcap_{A\in S'}\Pi(A)=\emptyset.$ Hence in case $\Phi\neq B_2$ or $G_2$, Corollary~\ref{sufficient_cond_gen_set}(1)
tells us that $S'$ is already a normally generating subset of $G(\Phi,R)$. This finishes the case $\Phi\neq B_2, G_2.$ 

Next, we do the case $\Phi=B_2$ or $G_2.$ We have $(R:2R)<\infty$ by assumption and hence Lemma~\ref{congruence_fin} implies for 
$N:=\dl\varepsilon_{\phi}(2a)|a\in R,\phi\in \Phi\dr$, that the group $G/N$ is finite. The set $S$ normally generates the group $G$ and hence the image of $S$ in 
$G/N$ normally generates $G/N$ and so we can pick a subset $S''\subset S$ with at most $M:=\card{G/N}$ elements such that the image of $S''$ in $G/N$ normally generates $G/N.$ Hence considering the set $\bar{S}:=S'\cup S''$ we have 
\begin{equation*}
|\bar{S}|\leq|S'|+|S''|\leq m+1+M
\end{equation*}
and the upper bound $m+1+M$ clearly does not depend on $S.$ Corollary~\ref{sufficient_cond_gen_set}(2) implies that $\bar{S}$ is a normally generating set of $G(\Phi,R)$. Thus we are done.
\end{proof}

\begin{remark}
The above theorem applies to various cases for example local rings, $p$-adic integers or other discrete valuation domains.
\end{remark}

We also obtain the following:

\begin{Theorem}
\label{stable_range1_strong_boundedness}
Let $R$ be a commutative ring with $1$ of stable range $1$ and $\Phi$ an irreducible root system of rank at least $2$ that is not $G_2$ or $B_2.$
Then for the elementary subgroup $E(\Phi,R)$ of $G(\Phi,R)$, there is a constant $C(\Phi,R)$ such that
\begin{equation*}
\Delta_k(E(\Phi,R))\leq C(\Phi,R)k
\end{equation*}
for all $k\in\mathbb{N}$. If $\Phi=B_2$ or $G_2$ we must further assume that $(R:2R)<\infty.$
\end{Theorem}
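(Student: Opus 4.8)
The plan is to reduce the statement to the argument already carried out for Theorem~\ref{exceptional Chevalley}. First, since $R$ has stable range $1$, Proposition~\ref{Vaserstein_decomposition} shows that ${\rm SL}_2(R)=G(A_1,R)$ is boundedly generated by root elements, and Corollary~\ref{rank_1_boundedness} then upgrades this to: $E(\Phi,R)$ is boundedly generated by root elements for every irreducible root system $\Phi$. This is the only place the stable range hypothesis is used.

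Next I would run the proof of Theorem~\ref{exceptional Chevalley} with the elementary subgroup $E(\Phi,R)$ in place of $G(\Phi,R)$ throughout. All the structural ingredients cooperate: Abe's Theorem~\ref{Abe} and the Costa--Keller results (Theorems~\ref{Keller_B2} and \ref{Keller_G2}) describe subgroups \emph{normalized by} $E(\Phi,R)$, hence apply unchanged to normal closures formed inside $E(\Phi,R)$; the commutator and Weyl-element manipulations of Section~\ref{proof_main} and of Lemmas~\ref{A2_parts}, \ref{B2_ideals}, \ref{G2_ideals} only ever conjugate by root elements, hence by elements of $E(\Phi,R)$; Lemma~\ref{necessary_cond_conj_gen} holds verbatim for normally generating sets of $E(\Phi,R)$, since its proof only uses that root elements lie in the group; and bounded generation by root elements, the remaining hypothesis of Theorem~\ref{exceptional Chevalley}, now holds for $E(\Phi,R)$ by the previous paragraph. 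The one point needing care is the compactness scheme of Section~\ref{proof_fundamental_prop}: the sentences $\theta_r$ as written quantify over the zero set of $P$, that is over all of $G(\Phi,R)$, whereas we now want the conjugators to lie in $E(\Phi,R)$. This is fixed by letting $\theta_r$ instead forbid writing the relevant root element $\varepsilon_\chi(e(k,l,v))$ as a product of at most $r$ conjugates of $A^{\pm1}$ by \emph{bounded-length products of root elements} — each $\varepsilon_\phi(x)$ being a polynomial, hence first-order, function of the ring variable $x$ — which is still a first-order scheme whose inconsistency follows exactly as before, since Abe's theorem (respectively Corollary~\ref{simplified_Keller_G2}) already exhibits $\varepsilon_\chi(e(k,l,v))$ inside the $E(\Phi,R)$-normal closure of $A$. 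For $\Phi=B_2$ or $G_2$ the hypothesis $(R:2R)<\infty$ is exactly what Lemma~\ref{congruence_fin} and Propositions~\ref{non-simply-laced-reduction} and \ref{non-simply-laced-reduction_G2} require.

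Carrying this through yields a constant $C(\Phi,R)\in\mathbb{N}$ with $\Delta_k(E(\Phi,R))\leq C(\Phi,R)k$ for all $k\in\mathbb{N}$, which is the assertion. The main obstacle is precisely the bookkeeping just described: verifying that none of the conjugations occurring in the proof of Theorem~\ref{exceptional Chevalley} need to leave $E(\Phi,R)$, and in particular rephrasing the Gödel compactness argument so that it witnesses the required root elements as bounded products of $E(\Phi,R)$-conjugates of $A$; everything else is a direct transcription. Should one prefer to avoid this bookkeeping, one may instead invoke surjective $K_1$-stability — for a ring of stable range $1$ and $\Phi$ of rank at least $2$ one has $G(\Phi,R)=E(\Phi,R)$ — after which Theorem~\ref{exceptional Chevalley} applies literally.
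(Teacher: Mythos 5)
Your proposal follows essentially the same route as the paper: use Proposition~\ref{Vaserstein_decomposition} together with Corollary~\ref{rank_1_boundedness} to get bounded generation of $E(\Phi,R)$ by root elements, rerun the proof of Theorem~\ref{exceptional Chevalley} for $E(\Phi,R)$ in place of $G(\Phi,R)$, and, for the compactness argument, replace the quantification over the zero set of $P$ by quantification over bounded-length products of root elements. That is exactly the paper's fix.

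There is, however, one point you gloss over and one assertion you make that is not quite right. You write that bounded generation "is the only place the stable range hypothesis is used," but it is also needed inside the compactness argument itself. In your modified $\theta_r$ you quantify over conjugators that are products of at most $N$ root elements, and to derive a contradiction from Abe (resp.\ Costa--Keller) you need that \emph{in every model of the scheme}, the whole of $E(\Phi,R)$ is reached by products of at most $N$ root elements — Abe's theorem by itself only places $\varepsilon_\chi(e(k,l,v))$ in the $E(\Phi,R)$-normal closure, with a priori unbounded-length conjugators. To make the length bound uniform across all models, one must append to the theories $\mathcal{T}_{kl}$ the first-order sentences expressing that $R$ has stable range $1$; then Proposition~\ref{Vaserstein_decomposition} and Tavgen's bootstrap (Proposition~\ref{bootstrapping}) give a bound $N=N(\Phi)$ that is valid for the ring underlying any model of the augmented theory. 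The paper makes this explicit, and your argument needs it too. Your closing alternative — that stable range $1$ and rank $\geq 2$ force $G(\Phi,R)=E(\Phi,R)$ so that Theorem~\ref{exceptional Chevalley} applies literally — is an appealing shortcut, but the paper does not invoke it, and you would need a citation for this $K_1$-stability fact in the Chevalley setting before relying on it.
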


\begin{proof}
We want to show a version of Theorem~\ref{exceptional Chevalley} that speaks about $E(\Phi,R)$ instead of $G(\Phi,R).$ This can be done by following the same arguments. The only difference in the proofs of Proposition~\ref{higher_rank_centralization}, Proposition~\ref{B2_centralization} and Proposition~\ref{G2_centralization} is that in the sentences of the theories $\C T_{kl}$, one can no longer quantify over the full Chevalley group $G(\Phi,R)$, but must quantify over all elements of $E(\Phi,R)$ despite the fact that this group cannot be defined in first order terms. Stable range $1$ is a first order property of a ring however and we know that $E(\Phi,R)$ has bounded generation by root elements for such rings. Thus by including a collection of sentences that describes that 
$R$ has stable range $1$, we can modify the $\C T_{kl}$ such that it quantifies over all elements of $E(\Phi,R)$ by quantifying over appropriate finite products of root elements and then the rest of the argument goes through. 
\end{proof}

\subsection{Rings of S-algebraic integers}

First, we are going to define S-algebraic integers.

\begin{mydef}\cite[Chapter~I, §11]{MR1697859}\label{S-algebraic_numbers_def}
Let $K$ be a finite field extension of $\mathbb{Q}$. Then let $S$ be a finite subset of the set $V$ of all valuations of $K$ such that $S$ contains all archimedean valuations. Then the ring $\C O_S$ is defined as 
\begin{equation*}
\C O_S:=\{a\in K|\ \forall v\in V-S: v(a)\geq 0\}.
\end{equation*}
\end{mydef}

Rings of S-algebraic integers do not have stable range $1$. For a remarkably large class of them -the ones with infinitely many units- the corresponding ${\rm SL}_2$ are still boundedly generated by root elements \cite[Theorem~1.1]{MR3892969}. This will be more relevant in our upcoming paper 
\cite{Hessenbergform_explicit_strong_bound}, when we talk about explicit bounds. More important for us is the following classical result:

\begin{Theorem}\cite[Theorem~A]{MR1044049}
\label{Tavgen}
Let $\Phi$ be an irreducible root system of rank at least $2$ and $R$ a ring of S-algebraic integers in a number field. Then $G(\Phi,R)$ has bounded generation with respect to root elements.
\end{Theorem}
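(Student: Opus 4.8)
This is Tavgen's theorem \cite[Theorem~A]{MR1044049}; what follows is a sketch of the classical route. The first move is a reduction to rank $2$: by Proposition~\ref{bootstrapping} applied with $m(R)=2$, it suffices to bound $\|E(\Phi_0,\mathcal{O}_S)\|_{EL}$ linearly in $\mathrm{rank}(\Phi_0)=2$ for every irreducible root subsystem $\Phi_0$ of $\Phi$ generated by simple roots and of rank $2$; up to isomorphism these are $A_2$, $C_2=B_2$ and $G_2$. Since $\mathrm{rank}(\Phi)\ge 2$ and $\mathcal{O}_S$ is a Dedekind ring of arithmetic type, one also has $G(\Phi,\mathcal{O}_S)=E(\Phi,\mathcal{O}_S)$ (a consequence of the positive solution of the congruence subgroup problem, via Bass--Milnor--Serre and Matsumoto), so bounded generation of the elementary subgroup by root elements is the same as that of $G(\Phi,\mathcal{O}_S)$. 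Thus everything comes down to the three groups $\mathrm{SL}_3(\mathcal{O}_S)$, $\mathrm{Sp}_4(\mathcal{O}_S)$ and $G_2(\mathcal{O}_S)$.

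For each of these I would argue by a Gauss/Bruhat-type decomposition. One cannot literally write an element $g$ as $u_-\,h\,w\,u_+$ over $\mathcal{O}_S$, but using a \emph{bounded} number of root elements one can carry $g$ to such a normal form in which the torus factor is a product of boundedly many $h_\alpha(t)$ with $t\in\mathcal{O}_S^{*}$, the Weyl-type factor is a product of boundedly many $w_\alpha(1)$, and each unipotent factor meets every root group at most once. Since $w_\alpha(1)$ is already a product of three root elements and $h_\alpha(t)=w_\alpha(t)w_\alpha(1)^{-1}$ a product of six, the torus and Weyl contributions cost a bounded word; repeated use of the commutator formulas of Lemma~\ref{commutator_relations} then lets one collapse the unipotent parts. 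Hence the whole statement reduces to the single assertion that an arbitrary element of the group can be brought to this normal form by a bounded number of root elements, and that in turn reduces to a ``column reduction'': a unimodular column (for $\mathrm{SL}_3$), respectively a unimodular isotropic vector (for $\mathrm{Sp}_4$ and $G_2$), over $\mathcal{O}_S$ can be transformed into the first standard basis vector by a bounded number of elementary operations, after which the corresponding row is cleared and one induces downward, always keeping the extra coordinate available so that the process never stalls at an $\mathrm{SL}_2$-block.

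The arithmetic heart, and the step I expect to be hardest to make \emph{uniform}, is this last column reduction: over a general Dedekind domain it fails outright (no bound, sometimes no reduction at all), so one genuinely uses that $\mathcal{O}_S$ is a ring of $S$-integers. After clearing denominators one may assume the entries lie in the ring of integers; then Dirichlet's theorem on primes in arithmetic progressions (equivalently Chebotarev in a ray class group) allows one to replace, by a single elementary operation, one entry by a \emph{prime element} $\pi$ in a prescribed residue class — here finiteness of the relevant ($S$-)ray class group is what guarantees, after a bounded correction, that a prime ideal in the required class can be taken principal. Reducing the other entries modulo $\pi$ lands in the finite field $\mathcal{O}_S/\pi\mathcal{O}_S$, where unimodularity forces one of them to be a unit, so a bounded sequence of further elementary operations produces a $1$ and clears the column to $e_1$. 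Turning every one of these steps into a bound on the number of root elements that is independent of $g$ is precisely the content of the Carter--Keller theorem for $\mathrm{SL}_n$, and for $\mathrm{Sp}_4$ and $G_2$ it requires the analogous but more delicate bookkeeping with Lemma~\ref{commutator_relations}; once that quantitative input is in place, assembling the pieces and invoking Proposition~\ref{bootstrapping} completes the proof.
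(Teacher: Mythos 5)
The paper cites this as \cite[Theorem~A]{MR1044049} without proof, so there is no proof in the paper to compare against; Theorem~\ref{Tavgen} is used purely as a black box. Your sketch is nonetheless a reasonable outline of the classical route: the rank-$2$ reduction via Proposition~\ref{bootstrapping} is Tavgen's own Proposition 1, the identity $G(\Phi,\mathcal{O}_S)=E(\Phi,\mathcal{O}_S)$ does come out of the Bass--Milnor--Serre/Matsumoto analysis and is what lets you move between bounded generation of $E$ and of $G$, and the arithmetic heart for $\mathrm{SL}_3$ really is a Carter--Keller style column reduction driven by Dirichlet-type density statements (primes in ray classes). The part of your sketch that is furthest from an actual proof is the last paragraph in the $\mathrm{Sp}_4$ and $G_2$ cases: the bookkeeping required to collapse a Bruhat-type normal form into a \emph{bounded} word is considerably more delicate than the ``unimodular isotropic vector reduction'' picture you describe, and Tavgen's original treatment of these two groups is correspondingly more technical and does not factor through a clean column-reduction lemma in the way your $\mathrm{SL}_3$ discussion does. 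That said, you correctly identify the two ingredients that make the theorem go --- the rank-$2$ reduction and the arithmetic density input --- which is the right level of detail for an outline of a result the paper only cites.
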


Furthermore, all non-zero ideals $I$ in a ring $R$ of S-algebraic integers have finite index. So, rings $R$ of S-algebraic integers in number fields have the property that $G(\Phi,R)$ is boundedly generated by root elements for all irreducible $\Phi$ of rank at least $2$ and the ideal $2R$ (and all other non-zero ideals) have finite index in $R.$ Hence Theorem~\ref{exceptional Chevalley} can be applied to the groups $G(\Phi,R)$. This gives us the following Theorem:

\begin{Theorem}
\label{alg_numbers_strong_bound}
Let $R$ be a ring of S-algebraic integers in a number field and $\Phi$ an irreducible root system of rank at least $2$. 
Then there is a constant $C(\Phi,R)\geq 1$ such that for $G(\Phi,R)$ one has
\begin{equation*}
\Delta_k(G(\Phi,R))\leq C(\Phi,R)k
\end{equation*} 
for all $k\in\mathbb{N}.$ 
\end{Theorem}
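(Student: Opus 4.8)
The plan is to obtain this statement as an immediate consequence of Theorem~\ref{exceptional Chevalley}, so the entire task reduces to verifying that $R=\C O_S$ and $G(\Phi,R)$ meet the hypotheses of that theorem. Two conditions must be checked: that $G(\Phi,R)$ is boundedly generated by root elements, and --- only in the cases $\Phi=B_2$ and $\Phi=G_2$ --- that $(R:2R)<\infty$.

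For bounded generation I would cite Theorem~\ref{Tavgen}: for $\Phi$ irreducible of rank at least $2$ and $R$ a ring of S-algebraic integers in a number field, $G(\Phi,R)$ has bounded generation with respect to root elements. This is precisely our setting, so nothing further is required. For the finiteness condition, I would invoke the classical fact that in a ring $\C O_S$ of S-algebraic integers every non-zero ideal has finite index; since $R$ has characteristic $0$ the ideal $2R$ is non-zero, hence $(R:2R)<\infty$. Thus the extra hypothesis imposed for $B_2$ and $G_2$ in Theorem~\ref{exceptional Chevalley} is satisfied automatically.

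With both hypotheses in place, Theorem~\ref{exceptional Chevalley} supplies a constant $C(\Phi,R)\in\mathbb{N}$ with $\Delta_k(G(\Phi,R))\leq C(\Phi,R)k$ for all $k\in\mathbb{N}$; replacing $C(\Phi,R)$ by $\max\{C(\Phi,R),1\}$ if necessary gives the asserted bound with $C(\Phi,R)\geq 1$. There is essentially no obstacle here: all the substantive content is already packaged into Theorem~\ref{exceptional Chevalley} and Theorem~\ref{Tavgen}, and the proof is a matter of assembling the correct inputs. The one subtlety worth flagging is that one must use the finite-index property of non-zero ideals of $\C O_S$ (rather than stable range $1$, which rings of S-algebraic integers do not in general possess) in order to guarantee $(R:2R)<\infty$.
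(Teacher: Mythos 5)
Your proposal is correct and matches the paper's own derivation exactly: cite Theorem~\ref{Tavgen} for bounded generation by root elements, observe that $2R$ (like every non-zero ideal of $\C O_S$) has finite index so the extra hypothesis for $B_2$ and $G_2$ is automatic, and then apply Theorem~\ref{exceptional Chevalley}.
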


\begin{remark}
Note that in contrast to the result by K\k{e}dra, Libman and Martin \cite[Theorem~6.1]{KLM} 
we do not have any control on the the behaviour of $C(\Phi,R)$, beyond the fact that it does not depend on $k.$ However, in the paper \cite{Hessenbergform_explicit_strong_bound} we will remedy this fact by providing explicit values for $C(\Phi,R)$ in case the ring is a principal ideal domain.
\end{remark}

We provide lower bounds on $\Delta_k(G(\Phi,R))$ in Section~\ref{section_lower_bounds}. 

Next, we are going to talk about orders in rings of algebraic integers and Morris results in \cite{MR2357719} and how to use them to get strong boundedness results. We do not define orders precisely, but they are subrings of rings of algebraic integers that are also sublattices of the same ring of algebraic integers.
First, there are the following results by Morris that are very similar to our results.

\begin{Theorem}\cite[Theorem~6.1, Remark~6.2, Corollary~6.13]{MR2357719}
Let $B$ be an order in a ring of algebraic integers and $S$ a multiplicative set in $B-\{0\}$. Further assume either that $n\geq 3$
or that $S^{-1}B$ has infinitely many units. Also let $X$ be a subset of $G:={\rm SL}_n(S^{-1}B)$, that is normalized by root elements and that does not consist entirely of scalar matrices. Then $X$ boundedly generates a finite index subgroup $N$ of ${\rm SL}_n(S^{-1}B)$ with a bound on the maximal length of a word in elements of $X$ that depends on $n,$ the degree $[K:\mathbb{Q}],$ the minimal numbers of generators of the level ideal $l(N)$ and the cardinality of $S^{-1}B/l(N).$ If 
$X:=\{gsg^{-1}|s\in S,g\in {\rm SL}(S^{-1}B)\}$ for a finite set $S$ the minimal number of generators of $l(N)$ is smaller than $n^2\card{S}.$

Similarly if $\Gamma$ is a finite index subgroup of ${\rm SL}_n(S^{-1}B)$ and $X\subset\Gamma$ a set that is normalized by $\Gamma$ and does not consist entirely of scalar matrices, then $X$ boundedly generates a finite index subgroup $N$ of $\Gamma$ with a bound that depends on the same numbers as above.
\end{Theorem}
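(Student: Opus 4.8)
The plan is to split the assertion into a structural part --- identifying $N$ with a congruence-type subgroup, which yields finite index for free --- and a quantitative part, namely effective bounded generation. Write $R := S^{-1}B$ and $N := \langle X\rangle$. Since $X$ is normalized by the root elements, the elementary group $E_n(R)$ normalizes $N$; and because $n\geq 3$, or $n=2$ and $R$ has infinitely many units, one has $E_n(R)={\rm SL}_n(R)$, so in fact $N\trianglelefteq{\rm SL}_n(R)$. Let $\mathfrak q:=l(N)$ be the level ideal, generated by the off-diagonal entries and the diagonal differences $x_{ii}-x_{jj}$ of the elements $x\in N$; note $\mathfrak q=l(X)$ since ``being scalar modulo an ideal'' is preserved under products and conjugation, and $\mathfrak q\neq 0$ because $X$ contains a non-scalar matrix. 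As every nonzero ideal of a localization of an order in $\mathcal O_K$ has finite index, $R/\mathfrak q$ is finite. The first step is to invoke the normal-structure (``sandwich'') theorem for $E_n$-normalized subgroups --- Suslin and Vaserstein for $n\geq 3$, and the Vaserstein--Liehl-type description for ${\rm SL}_2$ over rings with many units, which is precisely why the two cases appear in the hypothesis --- to obtain $E_n(R,\mathfrak q)\subseteq N\subseteq C_n(R,\mathfrak q)$ (possibly with $\mathfrak q$ replaced by a fixed power $\mathfrak q^{\kappa(n)}$ when $n=2$, which changes neither the finiteness of the index nor of the residue ring), where $C_n(R,\mathfrak q)$ is the full $\mathfrak q$-congruence subgroup. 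Finite index of $N$ is then immediate from $[{\rm SL}_n(R):C_n(R,\mathfrak q)]\leq|{\rm SL}_n(R/\mathfrak q)|<\infty$.

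The quantitative heart is to make the inclusion $E_n(R,\mathfrak q)\subseteq N$ effective. Fix generators $q_1,\dots,q_d$ of $\mathfrak q$, with $d$ the stated minimal number of generators of $l(N)$. Given $x\in X$ whose $(i,j)$-entry is $q_t$ (the case of a diagonal difference $x_{ii}-x_{jj}=q_t$ reduces to this after multiplying $x$ by a bounded product of elementaries), a fixed sequence of at most $c_1(n)$ commutators of $x$ with single elementary matrices extracts $E_{k,l}(q_t)\in N$; doing this for all $t$ writes every $E_{k,l}(q_t)$ as a product of $\leq c_1(n)\,d$ conjugates of elements of $X^{\pm1}$. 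A single further commutator $[E_{k,l}(q_t),E_{l,m}(r)]=E_{k,m}(q_t r)$ produces $E_{k,m}(q_t r)$ for arbitrary $r\in R$, and one more round of $E_n(R)$-conjugation reaches all of $E_n(R,\mathfrak q)$; thus every relative elementary matrix lies in $B_X\big(c_2(n)\,d\big)$ for a constant $c_2(n)$.

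It remains to bound the word length of an arbitrary element of $N$ over such relative elementaries. Here I would run the Carter--Keller--Paige argument for ${\rm SL}_n$ with $n\geq 3$ (respectively the Morgan--Rapinchuk--Sury argument exploiting infinitely many units, for $n=2$) in its \emph{relative} form: the congruence subgroup ${\rm SL}_n(R,\mathfrak q)$, and hence $C_n(R,\mathfrak q)$ --- which contains it with index $\leq|{\rm SL}_n(R/\mathfrak q)|$ --- is boundedly generated by $E_n(R,\mathfrak q)$, with a bound depending only on $n$, the degree $[K:\mathbb Q]$, and $|R/\mathfrak q|$; this rests on the Mennicke-symbol relations together with the stable-range input over $R$ and Dirichlet's unit theorem. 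Combining the three steps, every element of $N\subseteq C_n(R,\mathfrak q)$ is a product of boundedly many (in $n$, $[K:\mathbb Q]$, $|R/\mathfrak q|$) elements of $E_n(R,\mathfrak q)$, each costing $\leq c_2(n)\,d$ letters of $X^{\pm1}$, which is the asserted bound. For the final clauses: if $X=\{gsg^{-1}\mid s\in S_0,\ g\in{\rm SL}_n(R)\}$ with $S_0$ finite, then $l(N)=l(X)=\sum_{s\in S_0}l(s)$ and each $l(s)$ is generated by the $n(n-1)$ off-diagonal entries and $n-1$ diagonal differences of $s$, so $d\leq(n^2-1)|S_0|<n^2|S_0|$; and the version with a finite-index $\Gamma$ in place of ${\rm SL}_n(R)$ follows line by line once one replaces $E_n(R,\mathfrak q)$ by $\Gamma\cap E_n(R,\mathfrak q')$ for a nonzero ideal $\mathfrak q'$ with ${\rm SL}_n(R,\mathfrak q')\subseteq\Gamma$.

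The main obstacle is the \emph{effective and uniform} bounded generation of the congruence subgroup ${\rm SL}_n(R,\mathfrak q)$: upgrading the qualitative Carter--Keller / Tavgen statements to a bound whose constant depends only on $n$, $[K:\mathbb Q]$ and $|R/\mathfrak q|$ --- and not on $\mathfrak q$ itself --- requires carefully tracking the Mennicke-symbol computations and the stable-range reductions relative to the modulus $\mathfrak q$, and essentially all of the real work sits there.
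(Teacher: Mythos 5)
This statement is not proven in the paper at all; it is a cited result of Morris, reproduced here to situate the paper's own contribution. What the paper does tell us about Morris's argument is in the remark immediately following the quoted theorem: Morris obtains the bound via a \emph{first-order compactness} argument applied to the entirety of the generating set, which yields existence of a bound depending on $n$, $[K:\mathbb{Q}]$, the number of generators of $l(N)$ and $|S^{-1}B/l(N)|$, but gives no control over the actual numerical value. Your proposal takes a genuinely different, more explicit route: sandwich the $E_n$-normalized subgroup between $E_n(R,\mathfrak q)$ and the full congruence subgroup, extract relative elementaries via a fixed commutator scheme, then invoke an effective relative Carter--Keller--Paige bounded-generation theorem for ${\rm SL}_n(R,\mathfrak q)$. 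If completed, this would prove strictly more than the cited statement (an explicit constant rather than a compactness-guaranteed one) --- which is consonant with you correctly flagging the uniform relative bounded generation as the place where ``essentially all of the real work sits.'' Morris's compactness argument is precisely the device that sidesteps that work; the paper under review uses the same type of argument, but applied to a single element rather than the whole generating set, which is what buys the linear dependence on $|S|$.

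Beyond the route difference, your sketch has a real gap when $n=2$. The commutator extraction $[E_{k,l}(q_t),E_{l,m}(r)]=E_{k,m}(q_t r)$ needs three distinct indices, so it does not exist in ${\rm SL}_2$; the $n=2$ case has to run through the torus action and unit arguments (which is exactly why the hypothesis ``$S^{-1}B$ has infinitely many units'' appears), and you gesture at this without giving the argument. Likewise, the sandwich theorem for $E_2$-normalized subgroups of ${\rm SL}_2$ is far less robust than the Suslin--Vaserstein result for $n\geq 3$: it requires additional conditions on the ring (of an $SR_{1.5}$ or many-units type), and even then the ideal in the lower bound is typically a power or a multiple of $l(N)$ rather than $l(N)$ itself, as you half-acknowledge. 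So in the $n=2$ branch, both the structural step and the quantitative step of your outline are incomplete, not just the final Mennicke-symbol bookkeeping.
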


\begin{remark}
In the terminology of our paper this establishes that finite index, finitely normally generated, non-central subgroups $N$ of ${\rm SL}_n(S^{-1}B)$ are strongly bounded
(or $\Delta_k(N)<\infty$ for all $k$). The main difference (in the case that a finite $S$ normally generates ${\rm SL}_n$) to our result, is that Morris has no control on the actual value of $\Delta_k({\rm SL}_n)$, whereas we can establish that the dependence is at least linear in $k$. 
Structurally, the main reason for this difference seems to be that Morris uses a first order compactness result upon the entirety of a \textit{set of generators} to establish bounded generation but does not have any control on any particular one of the generators individually. We, on the other hand, apply a compactness result 
upon a particular given element of the group $G(\Phi,R)$ to obtain root elements with arguments lying in its level ideal and only later consider the full generating set to obtain the missing elements in case of $B_2$ and $G_2$. Our methods are able to prove a stronger version of \cite[Theorem~6.1, Remark~6.2, Corollary~6.13]{MR2357719} as well, but this is work in progress.
\end{remark}

Morris \cite[Theorem~5.26]{MR2357719} proves bounded generation by root elements for the subgroup $E(A_1,R)$ of ${\rm SL}_2(R)$, even in the case that $R$ is only a localization of an order, if said localization has infinitely many units. He demonstrates further that the elementary subgroup $E(A_2,R)$ of ${\rm SL}_3(R)$ is boundedly generated by root elements for $R$ a localization of an order 
\cite[Corollary~3.13]{MR2357719}. Most importantly however, in both cases the bounded generation results follows by proving that localization of orders satisfy certain
first order properties, that Morris calls ${\rm Gen}(t,r)$ and ${\rm Exp}(t,l)$ in case of $E(A_2,R)$ and additionally ${\rm Unit}(1,x)$ in case of $E(A_1,R)$ as well as in both cases stable range $3/2.$ Then Morris show that these properties imply bounded generation by root elements. 

Hence adding these first order properties in the proofs of Proposition~\ref{higher_rank_centralization}, Proposition~\ref{B2_centralization} and Proposition~\ref{G2_centralization} and applying Corollary~\ref{rank_1_boundedness} in case $\Phi$ is not simply-laced, one can prove the following:

\begin{Proposition}
Let $R$ be a localization of an order in a ring of algebraic integers and $\Phi$ an irreducible root system of rank at least $2$. 
Assume further that $R$ has infinitely many units in case $\Phi$ is not simply-laced. There is a constant $C(\Phi,R)$ such that 
\begin{equation*}
\Delta_k(E(\Phi,R))\leq C(\Phi,R)k
\end{equation*}
holds for all $k\in\mathbb{N}$.
\end{Proposition}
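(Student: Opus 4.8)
The plan is to imitate the proof of Theorem~\ref{exceptional Chevalley}, systematically replacing the full Chevalley group $G(\Phi,R)$ by its elementary subgroup $E(\Phi,R)$, exactly as sketched for Theorem~\ref{stable_range1_strong_boundedness}. The first step is to secure bounded generation of $E(\Phi,R)$ by root elements. Since $R$ is a localization of an order in a ring of algebraic integers, Morris shows in \cite{MR2357719} that $R$ has stable range $3/2$ and satisfies the first order properties ${\rm Gen}(t,r)$, ${\rm Exp}(t,l)$, and --- when $R$ has infinitely many units --- also ${\rm Unit}(1,x)$; by \cite[Corollary~3.13]{MR2357719} these force $E(A_2,R)$ to be boundedly generated by root elements, and by \cite[Theorem~5.26]{MR2357719} the same holds for $E(A_1,R)$ once $R$ has infinitely many units. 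When $\Phi$ is simply laced, every irreducible rank~$2$ subsystem spanned by simple roots has type $A_2$, so I would apply Proposition~\ref{bootstrapping} with $m(R)=2$; when $\Phi$ is not simply laced I would instead run Proposition~\ref{bootstrapping} with $m(R)=1$, noting that each $E(\{\alpha,-\alpha\},R)$ for a simple root $\alpha$ is a homomorphic image of $E(A_1,R)$ and hence boundedly generated --- this is exactly where the ``infinitely many units'' hypothesis is used. Fix the resulting constant $N=N(\Phi,R)$.

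Next I would re-run the compactness arguments behind Theorem~\ref{fundamental_reduction}, i.e. the proofs of Propositions~\ref{higher_rank_centralization}, \ref{B2_centralization} and \ref{G2_centralization}, for $E(\Phi,R)$. The structural inputs --- Theorem~\ref{Abe} and Theorems~\ref{Keller_B2},~\ref{Keller_G2} --- are already stated for arbitrary commutative rings and for $E(\Phi,R)$-normalized subgroups, so they apply to the normal closure of $A$ in $E(\Phi,R)$ with no change. The genuine difficulty is that $E(\Phi,R)$ is not first order definable, so the sentences $\theta_r$ of the theories $\C T_{kl}$ cannot quantify over it directly. The remedy, as in Theorem~\ref{stable_range1_strong_boundedness}, is to adjoin to each $\C T_{kl}$ the axiom schemata expressing stable range $3/2$, ${\rm Gen}(t,r)$, ${\rm Exp}(t,l)$ (and ${\rm Unit}(1,x)$ in the non simply laced case), and to rewrite $\theta_r$ so that every conjugator is a length-$N$ product of root elements $\prod_{j}\varepsilon_{\gamma_j}(y_j)$ with the $y_j$ new quantified ring variables. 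In any model of the enlarged theory the ring has the quoted properties, hence $E(\Phi,R)$ is generated by $N$ root elements, so the modified $\theta_r$ really do forbid writing $\varepsilon_\chi(e(k,l,v))$ as a product of $r$ conjugates of $A^{\pm1}$ by elements of $E(\Phi,R)$; inconsistency then follows from $\bar E(J,L)\subset H\subset E^*(J,L)$ and $l(A)\subset\sqrt{J}$ exactly as before, and G\"odel's compactness theorem yields constants $L(\Phi)$ depending only on $\Phi$ and $N$.

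Finally I would re-run the reduction arguments with $E(\Phi,R)$ in place of $G(\Phi,R)$: Lemma~\ref{necessary_cond_conj_gen} and, in the $B_2$ and $G_2$ cases, Lemma~\ref{congruence_fin} go through verbatim because root elements generate $E(\Phi,R)$, this group is boundedly generated by root elements, and $(R:2R)<\infty$ --- here $2R$ is a non-zero, hence finite-index, ideal of a localization of an order in a ring of algebraic integers. Then Propositions~\ref{mult_bound}, \ref{mult_bound_B2}, \ref{non-simply-laced-reduction}, \ref{G2_mult_bound} and \ref{non-simply-laced-reduction_G2} show that every root subgroup of $E(\Phi,R)$ is $\|\cdot\|_S$-bounded linearly in $|S|$ for every finite normally generating set $S$, and bounded generation by root elements finishes the proof with $\Delta_k(E(\Phi,R))\leq C(\Phi,R)k$. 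I expect the main obstacle to be precisely the bounded-generation/first-order-definability interface in the middle step: one needs Morris' \emph{uniform} bound on the number of root elements (a consequence of the first order properties themselves, not merely of bounded generation model-by-model) in order for the compactness argument to legitimately quantify over the non-definable group $E(\Phi,R)$.
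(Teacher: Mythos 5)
Your proposal follows the paper's own (very terse) proof sketch in all essentials: obtain bounded generation of $E(\Phi,R)$ by root elements via Morris's first-order properties (${\rm Gen}$, ${\rm Exp}$, stable range $3/2$, and ${\rm Unit}$ when $\Phi$ is not simply-laced) combined with Proposition~\ref{bootstrapping}/Corollary~\ref{rank_1_boundedness}, then adjoin these first-order axioms to the theories $\C T_{kl}$ so that the conjugators in $\theta_r$ can be replaced by bounded-length products of root elements, and finally re-run the reduction arguments. You correctly isolate the one point where care is needed --- that Morris's bound on the number of root elements is a \emph{uniform} consequence of the first-order properties (hence valid in any model of the enlarged theory), not merely a ring-by-ring fact --- which is exactly what makes the compactness step legitimate despite $E(\Phi,R)$ not being first-order definable.
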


Lastly, it should be possible to prove bounded generation results and hence strong boundedness also in the case of rings of functions of
algebraic curves over finite fields. The picture seems to be less clear in this area however as the only result about this we could find was \cite{Nica}
stating bounded generation of ${\rm SL}_n(\mathbb{F}[T])$ for $\mathbb{F}$ a finite field and $n\geq 3.$

\section{Lower bounds on $\Delta_k$}
\label{section_lower_bounds}

In this section, we talk about lower bounds on $\Delta_k$. The dichotomy between $G_2,B_2$ and the other $\Phi$ persists here. Namely, for $\Phi=B_2$ or $G_2$ the lower bounds depend strongly on the ring $R.$ First the higher rank cases:

\begin{Proposition}\cite[Theorem~6.1]{KLM}
\label{Lower_bounds_higher_rank}
Let $\Phi$ be an irreducible root system of rank at least $2$ and let $R$ be a Dedekind domain with finite class number and infinitely many maximal ideals such that 
$G(\Phi,R)$ is boundedly generated by root elements. Further assume that $2$ is a unit in $R$ if $\Phi=B_2$ or $G_2.$
Then $\Delta_k(G(\Phi,R))\geq k$ for all $k\in\mathbb{N}.$ 
\end{Proposition}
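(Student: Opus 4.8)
The plan is to produce, for every $k\in\mathbb{N}$, a normally generating set $S_k$ of $G:=G(\Phi,R)$ with $\card{S_k}=k$ together with an element $g\in G$ for which $\|g\|_{S_k}\geq k$; since $\Delta_k(G)\geq{\rm diam}(\|\cdot\|_{S_k})$ as soon as $S_k$ normally generates $G$, this forces $\Delta_k(G)\geq k$. Concretely, I would first fix in the matrix realization of Subsection~\ref{naturality_Chevalley} a root $\phi_0\in\Phi$ for which $l(\varepsilon_{\phi_0}(a))=aR$ for all $a\in R$ (such a root exists, for instance the long simple root of $B_2$ since $\varepsilon_{\beta}(t)=I_4+te_{24}$, and the highest root in the standard realizations in general). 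As $R$ has infinitely many maximal ideals, choose pairwise distinct maximal ideals $m_1,\dots,m_k$; since $R$ is Dedekind with finite class number $h$, each $m_i^{h}$ is principal, say $m_i^{h}=b_iR$, so that $a_j:=\prod_{i\neq j}b_i$ satisfies $a_jR=\prod_{i\neq j}m_i^{h}$. Set $S_k:=\{\varepsilon_{\phi_0}(a_1),\dots,\varepsilon_{\phi_0}(a_k)\}$ and $g:=\varepsilon_{\phi_0}(1)$.

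That $S_k$ normally generates $G$ I would deduce as follows. A maximal ideal $m$ contains $l(\varepsilon_{\phi_0}(a_j))=a_jR=\prod_{i\neq j}m_i^{h}$ precisely when $m\in\{m_i\mid i\neq j\}$, so $\Pi(S_k)=\bigcap_{j=1}^{k}\{m_i\mid i\neq j\}=\emptyset$, equivalently $\sum_{A\in S_k}l(A)=R$ by Lemma~\ref{necessary_cond_conj_gen}. Bounded generation by root elements gives $G=E(\Phi,R)$; hence for $\Phi\neq B_2,G_2$ Corollary~\ref{sufficient_cond_gen_set}(1) already yields $\dl S_k\dr=G$, while for $\Phi=B_2$ or $G_2$ the hypothesis $2\in R^{*}$ forces $N=\dl\varepsilon_{\phi}(2a)\mid a\in R,\phi\in\Phi\dr=E(\Phi,R)=G$ in Lemma~\ref{congruence_fin}, so $G/N$ is trivial and Corollary~\ref{sufficient_cond_gen_set}(2) applies.

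For the lower bound, consider the reductions $\pi_{m_i}\colon G\to G(\Phi,R/m_i)$. If $j\neq i$ then $a_j\in m_i$, so $\pi_{m_i}(\varepsilon_{\phi_0}(a_j))=1$ and every $G$-conjugate of $\varepsilon_{\phi_0}(a_j)^{\pm1}$ lies in $\ker\pi_{m_i}$. An arbitrary element of $B_{S_k}(k-1)$ is a product of at most $k-1$ conjugates of elements of $S_k\cup S_k^{-1}$, so the indices of the generators occurring in it form a set of size at most $k-1$, which therefore omits some $i^{*}\in\{1,\dots,k\}$; consequently that element lies in $\ker\pi_{m_{i^{*}}}$. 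On the other hand $\pi_{m_i}(g)=\varepsilon_{\phi_0}(1+m_i)\neq 1$ for every $i$, because the distinguished entry of this matrix equals $1+m_i\neq 0$ in the field $R/m_i$. Hence $g\notin B_{S_k}(k-1)$, i.e.\ $\|g\|_{S_k}\geq k$, and therefore $\Delta_k(G)\geq{\rm diam}(\|\cdot\|_{S_k})\geq k$.

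The only genuinely delicate points I anticipate are (i) arranging that the level ideal $a_jR$ of the $j$-th generator has no prime factors outside $\{m_i\mid i\neq j\}$ — this is exactly where finiteness of the class number enters, via $m_i^{h}$ being principal, and it is what makes the pigeonhole step work — and (ii) confirming normal generation in the $B_2$ and $G_2$ cases, which is the reason for assuming $2\in R^{*}$; the remainder is routine bookkeeping with reductions modulo the $m_i$.
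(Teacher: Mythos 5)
Your proof is correct and takes essentially the same approach as the paper: choose $k$ maximal ideals, use finiteness of the class number to build root elements $\varepsilon_\phi(a_j)$ whose level ideal is divisible by exactly the $k-1$ chosen primes other than $m_j$, verify normal generation via Corollary~\ref{sufficient_cond_gen_set}, and conclude by a pigeonhole argument on reductions modulo the $m_i$. The only (cosmetic) difference is that you phrase the pigeonhole via the kernels of the reduction maps $\pi_{m_i}$, whereas the paper argues via the sets $\Pi(\cdot)$ and the fact that $\bigcap_i\Pi(s_i^{g_i})\subset\Pi(\prod_i s_i^{g_i})$; for root elements $\varepsilon_\phi(r)$ these viewpoints coincide because $\Pi(\varepsilon_\phi(r))=\{m\mid\pi_m(\varepsilon_\phi(r))=1\}$.
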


\begin{proof}
Let $k$ distinct maximal ideals $\C P_1,\dots,\C P_k$ be given and let $c$ be the class number of $R.$ All the ideals $\C P_i^c$ are principal so choose $t_i$
as one of its generators and set for all $i$
\begin{equation*}
r_i:=\prod_{1\leq j\neq i\leq k}t_j.
\end{equation*}
Fix a short root $\phi\in\Phi$ and consider the elements $A_i:=\varepsilon_{\phi}(r_i)$ and the set $S:=\{A_1,\dots,A_k\}.$ Then 
$\Pi(A_i)=\bigcup_{j\neq i}\C \{P_j\}$ holds and thus $\Pi(S)=\emptyset.$ Hence if $\Phi\neq B_2, G_2$, then Corollary~\ref{sufficient_cond_gen_set}(1) implies that $S$ is a normally generating set of $G(\Phi,R).$ If on the other hand $\Phi=B_2$ or $G_2$ holds, then the assumption on $2$ implies 
$R=2R$ and so the condition in Corollary~\ref{sufficient_cond_gen_set}(2) reduces to $\Pi(S)=\emptyset$ as well.

To finish the proof, assume for contradiction that $\|\varepsilon_{\phi}(1)\|_S\leq k-1.$ Then there are elements $g_1,\dots,g_{k-1}\in G(\Phi,R)$ 
and $s_1,\dots,s_{k-1}\in S\cup S^{-1}\cup\{1\}$ such that
\begin{equation*}
\varepsilon_{\phi}(1)=\prod_{1\leq i\leq k-1}s_i^{g_i}.
\end{equation*} 
However, $\Pi(s_i^{g_i})=\Pi(s_i)$ contains at least $k-1$ elements of $\{\C P_1,\dots,\C P_k\}$ and hence $\bigcap_{1\leq i\leq k-1}\Pi(s_i^{g_i})$ cannot possibly 
be empty. This implies 
\begin{equation*}
\emptyset\neq\bigcap_{1\leq i\leq k-1}\Pi(s_i^{g_i})\subset\Pi(\varepsilon_{\phi}(1))=\emptyset.
\end{equation*}
This contradiction yields $\|\varepsilon_{\phi}(1)\|_S\geq k$. This proves as $\card{S}=k$ that 
\begin{equation*}
\Delta_k(G(\Phi,R))\geq\|G(\Phi,R)\|_S\geq~\|\varepsilon_{\phi}(1)\|_S\geq k.
\end{equation*}
\end{proof}

\begin{remark}
This is a generalization of \cite[Theorem~6.1]{KLM}, yet the proof is essentially the same.
\end{remark}

Next, we are going to describe lower bounds on $\Delta_k({\rm Sp}_4(R))$ and $\Delta_k(G_2(R))$ in the general case. It turns out that in this case the (existence of) lower bounds are strongly dependent on the way $2$ splits into primes in the ring $R.$  

\begin{Theorem}
\label{lower_bounds_rank2}
Let $\Phi$ be $B_2$ or $G_2$ and let $R$ be a ring of S-algebraic integers in a number field with $R\neq 2R$. 
Further let
\begin{equation*}
r:=r(R):=|\{\C P|\ \C P\text{ divides 2R, is a prime ideal and }R/\C P=\mathbb{F}_2\}|
\end{equation*}
be given. Then for $G(\Phi,R)$
\begin{enumerate}
\item{the inequality $\Delta_k(G(\Phi,R))\geq k$ holds for all $k\in\mathbb{N}$ with $k\geq r(R)$ and}
\item{the equality $\Delta_k(G(\Phi,R))=-\infty$ holds for $k<r(R).$} 
\end{enumerate}
\end{Theorem}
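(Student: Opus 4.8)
The plan is to establish both parts via the structure results already developed, in particular Corollary~\ref{sufficient_cond_gen_set}(2) and Lemma~\ref{congruence_fin}, together with a careful analysis of normal generation of the finite quotient $G/N$ where $N=\dl\varepsilon_{\phi}(2a)\mid a\in R,\phi\in\Phi\dr$. Since $R$ is a ring of S-algebraic integers with $R\neq 2R$, Theorem~\ref{Tavgen} gives bounded generation by root elements and every nonzero ideal (in particular $2R$) has finite index, so Lemma~\ref{congruence_fin} applies and $G/N$ is finite. By the second remark following the two remarks after Proposition~\ref{mult_bound}, and by the congruence subgroup results cited there, one can identify $N$ with $\ker\big(\pi_{2R}\colon G(\Phi,R)\to G(\Phi,R/2R)\big)$, so $G/N\cong G(\Phi,R/2R)$. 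By the Chinese Remainder Theorem, $R/2R\cong\prod_{\C P\mid 2R} R/\C P^{e_{\C P}}$, and hence $G(\Phi,R/2R)$ decomposes as a direct product of the groups $G(\Phi,R/\C P^{e_{\C P}})$. The key observation is that among these factors, precisely $r(R)$ of them are $G(\Phi,\mathbb F_2)$, and these particular factors are the obstruction to normal generation by few elements, because $G(\Phi,\mathbb F_2)$ for $\Phi=B_2$ or $G_2$ is a finite group (indeed ${\rm Sp}_4(\mathbb F_2)\cong S_6$ and $G_2(\mathbb F_2)$ has a simple subgroup of index $2$) that is \emph{not} normally generated by a single element, whereas it is normally generated by two. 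The remaining factors $G(\Phi,R/\C P^{e_{\C P}})$ with $R/\C P\neq\mathbb F_2$ are, by the same kind of analysis (the residue field is larger, so root elements survive nontrivially after reduction and a single non-central conjugacy class normally generates), normally generated by a single suitable element.

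\textbf{Part (2).} First I would prove that for $k<r(R)$ there is no normally generating subset of $G(\Phi,R)$ of cardinality $k$, i.e.\ $\Delta_k=-\infty$. Suppose $S$ with $|S|=k$ normally generates $G$. Then its image normally generates $G/N\cong\prod_j G(\Phi,R/\C P_j^{e_j})$, hence projects to a normal generating set in each factor $G(\Phi,\mathbb F_2)$ (of which there are $r(R)$). Since a finite direct product is normally generated by $k$ elements only if \emph{each} factor that is not normally generated by fewer elements contributes — more precisely, using that each $G(\Phi,\mathbb F_2)$ is perfect (or at least that its abelianization argument forces a counting bound) — I would derive the inequality $k\ge r(R)$. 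The clean way: the abelianization of $G(\Phi,\mathbb F_2)$ for these $\Phi$, or a Goursat-type argument on the perfect quotients, shows that a product of $m$ copies of a group $H$ which is not normally generated by one element requires at least $m$ normal generators; since each of the $r(R)$ copies of $G(\Phi,\mathbb F_2)$ is such an $H$, we get $k\ge r(R)$, contradicting $k<r(R)$.

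\textbf{Part (1).} For $k\ge r(R)$ I would combine the lower-bound technique of Proposition~\ref{Lower_bounds_higher_rank} with an explicit construction handling the finitely many ``bad'' primes. Choose $k$ distinct maximal ideals; to $r(R)$ of the chosen generators I would attach elements that, modulo the respective $\C P$ with $R/\C P=\mathbb F_2$, realize the required normal generators of each $G(\Phi,\mathbb F_2)$ (using that $k\ge r(R)$ gives enough room), while also arranging the level ideals so that $\Pi(S)=\emptyset$ — exactly as in Proposition~\ref{Lower_bounds_higher_rank}, pick $t_i$ generating a power of $\C P_i$ and set $r_i=\prod_{j\neq i}t_j$. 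Then Corollary~\ref{sufficient_cond_gen_set}(2) confirms $S$ normally generates $G(\Phi,R)$. Finally, the same $\Pi$-counting argument as in Proposition~\ref{Lower_bounds_higher_rank} shows $\|\varepsilon_\phi(1)\|_S\ge k$: any expression of $\varepsilon_\phi(1)$ as a product of $k-1$ conjugates of elements of $S\cup S^{-1}$ forces $\bigcap_{i=1}^{k-1}\Pi(s_i^{g_i})\neq\emptyset$, yet this intersection must lie in $\Pi(\varepsilon_\phi(1))=\emptyset$, a contradiction. Hence $\Delta_k(G(\Phi,R))\ge\|G(\Phi,R)\|_S\ge k$.

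\textbf{Main obstacle.} The routine part is the $\Pi$-counting lower bound, which is essentially Proposition~\ref{Lower_bounds_higher_rank}. The genuinely new work is the precise group-theoretic analysis of $G(\Phi,R/2R)$: proving that $G(\Phi,\mathbb F_2)$ is \emph{not} normally generated by a single element for $\Phi=B_2$ and $G_2$ (this is where the specific finite groups ${\rm Sp}_4(\mathbb F_2)$ and $G_2(\mathbb F_2)$ must be examined directly — e.g.\ via the transposition class in $S_6$, respectively the structure of $G_2(2)$), while the other local factors $G(\Phi,R/\C P^{e})$ with larger residue field \emph{are} singly normally generated, and then packaging this into the sharp threshold $r(R)$. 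Handling the case of higher ramification ($e_{\C P}>1$) over a $\C P$ with $R/\C P=\mathbb F_2$ — showing such a factor does not increase the count beyond $1$ each — is the fiddly technical point I would need to pin down carefully, likely by an explicit lifting argument from $G(\Phi,\mathbb F_2)$ to $G(\Phi,R/\C P^{e})$ using that the kernel is a $2$-group generated by $\varepsilon_\phi(2a)$'s.
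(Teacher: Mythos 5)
There is a genuine error in your diagnosis of the key group-theoretic fact. You claim several times that $G(\Phi,\mathbb{F}_2)$ (for $\Phi=B_2,G_2$) ``is \emph{not} normally generated by a single element.'' This is false: ${\rm Sp}_4(\mathbb{F}_2)\cong S_6$ is normally generated by any single odd permutation (the only proper normal subgroup is $A_6$), and likewise $G_2(\mathbb{F}_2)$ is normally generated by any element outside its simple index-$2$ subgroup. What actually fails for $r\geq 2$ is normal generation of the \emph{product} $G(\Phi,\mathbb{F}_2)^r$ by fewer than $r$ elements, and the mechanism is purely abelian: the abelianization of $G(\Phi,\mathbb{F}_2)$ is $\mathbb{F}_2$ (nontrivial, because $G(\Phi,\mathbb{F}_2)$ is \emph{not} perfect), so $\big(G(\Phi,\mathbb{F}_2)^r\big)^{\mathrm{ab}}=\mathbb{F}_2^r$ needs $r$ generators, and a normal generating set of a group surjects onto a generating set of its abelianization. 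The paper establishes exactly this: it builds an explicit epimorphism $G(\Phi,\mathbb{F}_2)\to\mathbb{F}_2$ from the Steinberg presentation, composes with ${\rm Sp}_4(R)\twoheadrightarrow\prod {\rm Sp}_4(R/\C P_i)={\rm Sp}_4(\mathbb{F}_2)^r$ to get $G(\Phi,R)\twoheadrightarrow\mathbb{F}_2^r$, and then Part~(2) follows in one line. Your alternative ``Goursat-type argument on perfect quotients'' also cannot work as stated, since these quotients are not perfect; the non-perfectness is precisely the point.

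For Part~(1), your outline is the right shape (pick $k$ primes including the $r$ bad ones, build $r_u$ as products omitting one prime each, verify $\Pi(S)=\emptyset$, run the $\Pi$-counting argument), but you gloss over the step that actually requires new work: verifying that the image of $S$ normally generates $G(\Phi,R/2R)$, i.e.\ the second condition of Corollary~\ref{sufficient_cond_gen_set}(2). The paper isolates this as Proposition~\ref{quotient_normal_generation}: a single root element $\varepsilon_\alpha(\bar x)$ with $\bar x$ a unit normally generates $G(\Phi,\bar R)$ as long as at most one of the prime factors of $\bar R$ has residue field $\mathbb{F}_2$. This is what lets each element $\varepsilon_\alpha(r_u)$ take care of one bad prime $\C P_u$ together with \emph{all} the good primes $\C Q_j$ simultaneously. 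Its proof is not the ``explicit lifting argument using that the kernel is a $2$-group'' you guess at; instead it uses the action of $h_\beta(\cdot)$ to pass from one unit argument to all units, and then the nontrivial ring-theoretic input (from \cite{MR2161255}) that a semilocal ring with no $\mathbb{F}_2\times\mathbb{F}_2$ quotient is additively generated by its units. Without some substitute for this lemma, your construction does not close, since you would need to separately realize normal generators of each local factor, whereas you only have $k$ single root elements of the global group to work with.
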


We show both parts of the theorem separately. For the first part, the main difficulty, compared to Proposition~\ref{Lower_bounds_higher_rank} 
comes, from the more complex conditions a set $S$ has to fulfill to be a normal generating set. To address this, we need the following technical Proposition describing algebraic properties of finite quotients of rings of S-algebraic integers.

\begin{Proposition}
\label{quotient_normal_generation}
Let $R$ be a ring of S-algebraic integers and $\C P_1,\dots,\C P_s$ be non-zero prime ideals and $l_1,\dots,l_s\in\mathbb{N}.$
Assume further that at most one of the $\C P_i$ has the property 
\begin{equation*}
[R/\C P_i:\mathbb{F}_2]=1
\end{equation*}
 and let $\bar{x}\in R/(\C P_1^{l_1}\cdots\C P_s^{l_s})=:\bar{R}$ 
be a unit. Then $\varepsilon_{\alpha}(\bar{x})$ normally generates ${\rm Sp}_4(\bar{R})$ or $G_2(\bar{R})$ respectively.
\end{Proposition}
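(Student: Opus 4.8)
The plan is to reduce to the local case via the Chinese Remainder Theorem, settle that case by elementary manipulations inside the group generated by root elements, and then glue the local pieces back together, using the hypothesis on residue fields precisely to bypass the abelian quotients of ${\rm Sp}_4(\mathbb{F}_2)$ and $G_2(\mathbb{F}_2)$.

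\emph{Step 1 (CRT).} Each $\C P_i$ is a non-zero prime of a ring of $S$-algebraic integers, so $R_i:=R/\C P_i^{l_i}$ is a finite local ring with residue field $k_i=R/\C P_i$, and $\bar R\cong\prod_{i=1}^s R_i$. Since $G(\Phi,-)={\rm Hom}_{\mathbb{Z}}(\mathbb{Z}[G],-)$ turns finite products of rings into direct products of groups, $G(\Phi,\bar R)=\prod_i G(\Phi,R_i)$ and $\varepsilon_\alpha(\bar x)$ corresponds to $(\varepsilon_\alpha(x_i))_i$ with each $x_i\in R_i^{*}$. Relabel so that $k_i\not\cong\mathbb{F}_2$ for $i\geq 1$, and $k_0\cong\mathbb{F}_2$ at the at most one index where this happens (if there is none, the argument only shortens).

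\emph{Step 2 (local case).} For each $i$ the ring $R_i$ is semilocal and $\Phi$ has rank $\geq 2$, so $G(\Phi,R_i)=E(\Phi,R_i)$ by Proposition~\ref{root_generation_semilocal_rings}, and the claim is $\dl\varepsilon_\alpha(x_i)\dr=G(\Phi,R_i)$. Conjugating by Weyl elements (which lie in $E(\Phi,R_i)$) and using that the short roots form a single Weyl orbit places every $\varepsilon_\psi(\pm x_i)$ with $\psi$ short into the normal closure; conjugating $\varepsilon_\alpha(x_i)$ by $h_\beta(t)$, $t\in R_i^{*}$, multiplies the argument by $t^{\langle\alpha,\beta\rangle}=t^{-1}$ (the standard torus relation) and so produces $\varepsilon_\alpha(u)$ for every unit $u$; and since every element of a finite local ring is a sum of at most two units, multiplying these yields $\varepsilon_\psi(r)$ for all short $\psi$ and all $r\in R_i$. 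For $\Phi=B_2$ the commutator $(\varepsilon_\beta(b),\varepsilon_\alpha(1))=\varepsilon_{\alpha+\beta}(\pm b)\varepsilon_{2\alpha+\beta}(\pm b)$ lies in the normal closure (it is a conjugate of $\varepsilon_\alpha(1)\in\dl\varepsilon_\alpha(x_i)\dr$ times its inverse), and cancelling the short factor gives $\varepsilon_{2\alpha+\beta}(r)$ for all $r$. For $\Phi=G_2$ one uses $(\varepsilon_\beta(b),\varepsilon_\alpha(1))$ from Lemma~\ref{commutator_relations}(4), cancels the two short factors, and then, since $(\varepsilon_{3\alpha+\beta}(\cdot),\varepsilon_{3\alpha+2\beta}(\cdot))=1$, uses a $b\mapsto-b$ symmetrisation to split off $\varepsilon_{3\alpha+\beta}(\pm 2b)$; when $2$ is a unit in $R_i$ this reaches $\varepsilon_{3\alpha+\beta}(r)$ for all $r$, and when $2$ is not a unit (so $k_i$ has characteristic $2$ and $3$ is a unit) one instead uses $(\varepsilon_{2\alpha+\beta}(\cdot),\varepsilon_{\alpha+\beta}(\cdot))=\varepsilon_{3\alpha+2\beta}(\pm 3\,\cdot)$; either way the long root elements are obtained for all arguments after applying the Weyl orbit of long roots. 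Hence $\dl\varepsilon_\alpha(x_i)\dr$ contains every root element, so it equals $E(\Phi,R_i)=G(\Phi,R_i)$.

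\emph{Step 3 (perfectness of the large factors, and gluing).} When $\card{k_i}\geq 3$ choose a unit $u$ of $R_i$ with $u-1$ also a unit; then $\varepsilon_\alpha((u-1)x_i)=h_\beta(u^{-1})\varepsilon_\alpha(x_i)h_\beta(u^{-1})^{-1}\varepsilon_\alpha(x_i)^{-1}$ is a commutator with unit argument, so by Step 2 applied to it, $G(\Phi,R_i)=\dl\varepsilon_\alpha((u-1)x_i)\dr$ is contained in the normal subgroup $[G(\Phi,R_i),G(\Phi,R_i)]$; thus $G_i:=G(\Phi,R_i)$ is perfect for $i\geq 1$. Now set $N:=\dl(\varepsilon_\alpha(x_i))_i\dr\leq\prod_i G_i$. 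Conjugating $(\varepsilon_\alpha(x_i))_i$ by an element of $\prod_i G_i$ supported in one coordinate $j$ and multiplying by $(\varepsilon_\alpha(x_i))_i^{-1}$ shows that $N$ contains all tuples supported in coordinate $j$ of the form $(g,\varepsilon_\alpha(x_j))$, $g\in G_j$; being normal, $N$ then contains $1\times\cdots\times K_j\times\cdots\times 1$, where $K_j\triangleleft G_j$ is the normal closure of $[G_j,\varepsilon_\alpha(x_j)]$. Since $\dl\varepsilon_\alpha(x_j)\dr=G_j$ (Step 2), $G_j/K_j$ is abelian; for $j\geq 1$, $G_j$ is perfect, so $K_j=G_j$ and the whole $j$-th factor lies in $N$. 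Hence $\prod_{i\geq 1}G_i\leq N$, and multiplying $(\varepsilon_\alpha(x_i))_i$ by a suitable element of this subproduct yields $(\varepsilon_\alpha(x_0),1,\dots,1)\in N$, whose normal closure is $G_0\times 1\times\cdots$ again by Step 2. Therefore $N=\prod_i G_i=G(\Phi,\bar R)$, as claimed.

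The main obstacle — and the reason for the hypothesis — is that the excluded factor $G_0$ with residue field $\mathbb{F}_2$ is not perfect: both ${\rm Sp}_4(\mathbb{F}_2)$ and $G_2(\mathbb{F}_2)$ surject onto $\mathbb{Z}/2$ with the short root elements hitting the non-trivial class, so for $G_0$ one only gets $K_0\supseteq[G_0,G_0]$, not $K_0=G_0$, and the single-coordinate trick no longer isolates the full factor. If two of the $\C P_i$ had residue field $\mathbb{F}_2$, the image of $\varepsilon_\alpha(\bar x)$ in the corresponding $\mathbb{Z}/2\times\mathbb{Z}/2$ would lie on the diagonal and the conclusion would genuinely fail; allowing at most one such factor lets one absorb all perfect factors into $N$ first and only then bring in $G_0$ via Step 2. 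A secondary technical nuisance is the $G_2$ commutator bookkeeping in Step 2, which has to be split according to whether $2$ or $3$ is invertible in the finite local ring $R_i$.
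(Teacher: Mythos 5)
Your proof is correct, and it reaches the same conclusion by a genuinely different route from the paper. The paper avoids a Chinese Remainder decomposition entirely: it defines $\bar R_0:=\{\bar y\in\bar R\mid \varepsilon_\alpha(\bar y)\in N\}$, shows via the torus relation that $\bar R_0$ is an additive subgroup containing all units, and then invokes a ring-theoretic lemma (from the reference \cite{MR2161255}) that a semilocal ring with no $\mathbb{F}_2\times\mathbb{F}_2$ quotient is additively generated by its units; the hypothesis on the $\C P_i$ is used exactly to exclude such a quotient. Once $\bar R_0=\bar R$, the long root elements are extracted in one step from Lemma~\ref{B2_ideals}(3) (resp.\ Lemma~\ref{G2_ideals}(2)), sidestepping any case split on whether $2$ or $3$ is invertible. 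You instead localize first via CRT, prove the one-variable local statement $\dl\varepsilon_\alpha(x_i)\dr=G(\Phi,R_i)$ by hand (torus conjugation, the fact that any element of a local ring is a sum of two units, and the $B_2/G_2$ commutator formulas), then resolve the fact that a normal subgroup of a direct product need not split by a perfectness-plus-isolation argument: the factors with $|k_i|\geq 3$ are perfect, so the commutator trick forces each whole factor into $N$, after which the single $\mathbb{F}_2$-factor is isolated by subtraction. Your route is more explicitly group-theoretic and makes the $\mathbb{F}_2\times\mathbb{F}_2$ obstruction visible as a failure of perfectness; the paper's route is shorter because the unit-generation lemma for $\bar R$ does the gluing implicitly and the single commutator lemma handles the long roots uniformly. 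Both proofs rely on the same torus relation $h_\beta(t)\varepsilon_\alpha(x)h_\beta(t)^{-1}=\varepsilon_\alpha(t^{-1}x)$ to reach all units, so the divergence is entirely in how one passes from units to all of $\bar R$ and then glues.
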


\begin{proof}
First, we do the case ${\rm Sp}_4(R).$ Let $N$ be the subgroup of ${\rm Sp}_4(R)$ normally generated by $\varepsilon_{\alpha}(\bar{x}).$
We first prove for $\bar{R}_0:=\{\bar{y}\in\bar{R}|\varepsilon_{\alpha}(\bar{y})\in N\}$ that $\bar{R}_0=\bar{R}.$ This is done in two steps.
First, we prove that $\bar{R}_0$ contains all units of $\bar{R}$ and is closed under addition and then second, that $\bar{R}$ is generated as an additive group by its
units. But this yields the proposition, because $\varepsilon_{\alpha}(\bar{a})\in N$ for all $\bar{a}\in\bar{R}$ implies together with Lemma~\ref{B2_ideals}(3),
that $N$ contains all root elements and by Proposition~\ref{root_generation_semilocal_rings} the group ${\rm Sp}_4(\bar{R})$ is generated by its root elements as 
$\bar{R}$ is finite and hence semi-local.

For the first step, according to \cite[Lemma~20(c), Chapter~3, p.~23]{MR3616493}, we have for any unit $\bar{u}\in\bar{R}$ that
\begin{equation*}
N\ni h_{\beta}(\bar{x}\bar{u}^{-1})\varepsilon_{\alpha}(\bar{x})h_{\beta}(\bar{x}\bar{u}^{-1})^{-1}
=\varepsilon_{\alpha}((\bar{x}\bar{u}^{-1})^{\langle\alpha,\beta\rangle}\bar{x})
=\varepsilon_{\alpha}((\bar{x}\bar{u}^{-1})^{-1}\bar{x})=\varepsilon_{\alpha}(\bar{u}).
\end{equation*}

For the second step, observe first that $\bar{R}$ does not have $\mathbb{F}_2\times\mathbb{F}_2$ as a quotient ring. This is the case, because otherwise
the ring $R$ would have two distinct non-zero prime ideals $\C Q_1,\C Q_2$ with $R/\C Q_1=R/\C Q_2=\mathbb{F}_2$ and $\C P_1^{l_1}\cdots\C P_s^{l_s}\subset\C Q_1,\C Q_2.$ 
But then $\C Q_1$ and $\C Q_2$ are among the $\C P_1,\dots,\C P_s$, which is impossible, because there is at most one $\C P_i$ with $R/\C P_i=\mathbb{F}_2$.
Yet semi-local rings without $\mathbb{F}_2\times\mathbb{F}_2$ as a quotient ring are generated by their units according to \cite[Lemma~2(d)]{MR2161255} 
and hence $\bar{R}_0=\bar{R}.$

For the case $G_2(R)$, note that we obtain $\{\bar{x}|\varepsilon_{\alpha}(\bar{x})\in N\}=\bar{R}$ the same way as in the case of ${\rm Sp}_4(R).$
So $N$ contains all root elements for short roots. Lemma~\ref{G2_ideals}(2) yields now that $N$ also contains all of the root elements for long roots.   
Hence as $G_2(\bar{R})$ is generated by root elements we are done.
\end{proof}

We can show the first part of the theorem now.

\begin{proof}
Let the ideal $2R$ in $R$ split into primes as follows:
\begin{equation*}
2R=\left(\prod_{i=1}^r\C P_i^{l_i}\right)\cdot\left(\prod_{j=1}^s\C Q_j^{k_j}\right)
\end{equation*}
with $[R/\C P_i:\mathbb{F}_2]=1$ for $1\leq i\leq r$ and $[R/\C Q_j:\mathbb{F}_2]>1$ for $1\leq j\leq s.$ Next, let $c$ be the class number of $R.$
Pick elements $x_1,\dots,x_r\in R$ such that $\C P_i^c=(x_i)$ for all $i.$
Also choose $r+1$ distinct primes $V_{r+1},\dots,V_k$ in $R$ which do not agree with any of the $\C P_1,\dots,\C P_r,\C Q_1,\dots,\C Q_s.$
Passing to the powers $V_{r+1}^c,\dots,V_k^c$ we can find elements $v_{r+1},\dots,v_k\in R$ with $V_{r+1}^c=(v_{r+1}),\dots,V_k^c=(v_k).$ 
Further, define the following elements for $1\leq u\leq r(R)=r$ 
\begin{equation*}
r_u:=\left(\prod_{1\leq i\neq u\leq r}x_i\right)\cdot v_{r+1}\cdots v_k.
\end{equation*}
For $k\geq u\geq r+1$ set
\begin{equation*}
r_u:=x_1\cdots x_r\cdot\left(\prod_{r+1\leq u\neq q\leq k} v_q\right).
\end{equation*}

We consider the set $S:=\{\varepsilon_{\alpha}(r_1),\dots,\varepsilon_{\alpha}(r_k)\}$ in ${\rm Sp}_4(R)$ or $G_2(R).$ Note that $\alpha$ is the short, positive simple root in both cases. For the sake of brevity, we will only write down the case of ${\rm Sp}_4(R).$
\begin{Claim}S is a normal generating set of ${\rm Sp}_4(R).$
\end{Claim}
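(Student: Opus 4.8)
The plan is to verify that $S = \{\varepsilon_\alpha(r_1),\dots,\varepsilon_\alpha(r_k)\}$ satisfies the two conditions of Corollary~\ref{sufficient_cond_gen_set}(2): namely $\Pi(S) = \emptyset$ and $S$ maps to a normally generating set of ${\rm Sp}_4(R)/N$, where $N = \dl\varepsilon_\phi(2a)\mid a\in R,\phi\in B_2\dr$. For the first condition, I would compute $\Pi(\varepsilon_\alpha(r_u))$: since $l(\varepsilon_\alpha(r_u)) = r_u R$, this set consists of the maximal ideals containing $r_u$. By construction $r_u$ is (up to the class-number fudge factors $v_q$) a product of all the prime generators $x_i$ and $v_q$ \emph{except} the $u$-th one, so the primes $\C P_u$ (for $u\le r$) and $V_u$ (for $u>r$) respectively are \emph{not} among the primes dividing $r_u$. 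Hence no single prime divides all the $r_u$ simultaneously, so $\bigcap_u \Pi(\varepsilon_\alpha(r_u)) = \emptyset$, giving $\Pi(S)=\emptyset$ (using $\Pi(T_1\cup T_2) = \Pi(T_1)\cap\Pi(T_2)$ and $\sum_u l(\varepsilon_\alpha(r_u)) = R$ as in Lemma~\ref{necessary_cond_conj_gen}).

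For the second condition, I would use the congruence subgroup property to identify ${\rm Sp}_4(R)/N$ with ${\rm Sp}_4(R/2R)$ (as noted in the remark after Proposition~\ref{non-simply-laced-reduction}), and then decompose via CRT: $R/2R \cong \prod_{i=1}^r R/\C P_i^{l_i} \times \prod_{j=1}^s R/\C Q_j^{k_j}$, so ${\rm Sp}_4(R/2R) \cong \prod_i {\rm Sp}_4(R/\C P_i^{l_i}) \times \prod_j {\rm Sp}_4(R/\C Q_j^{k_j})$. The key point is that a subset normally generates a finite direct product of groups precisely when it normally generates each factor after projection (here using that the factors ${\rm Sp}_4$ over these local rings are perfect, or more carefully, that the elements we produce are nontrivial in exactly one coordinate). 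For each fixed factor $R/\C P_i^{l_i}$: among the $r_u$, the single element $r_i$ is a unit modulo $\C P_i^{l_i}$ (it is divisible by all the other primes but not by $\C P_i$), while all other $r_u$ vanish mod $\C P_i^{l_i}$. So the image of $S$ in ${\rm Sp}_4(R/\C P_i^{l_i})$ is essentially $\{\varepsilon_\alpha(\bar u)\}$ for a unit $\bar u$, which normally generates by Proposition~\ref{quotient_normal_generation}. The same works for the $\C Q_j$-factors using $r_u$ for suitable $u > r$: note the hypothesis of Proposition~\ref{quotient_normal_generation} (at most one prime with residue field $\mathbb{F}_2$) is satisfied for $R/\C Q_j^{k_j}$ since $[R/\C Q_j:\mathbb{F}_2] > 1$.

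The main obstacle I anticipate is the bookkeeping in matching up which $r_u$ is a unit modulo which prime power, and handling the class-number factors $v_q$ carefully so that $r_u$ really is a unit mod $\C P_i^{l_i}$ or $\C Q_j^{k_j}$ for the right index — one must check that the auxiliary primes $V_{r+1},\dots,V_k$ (chosen disjoint from all $\C P_i$ and $\C Q_j$) contribute units in every relevant residue ring, and that each factor of $R/2R$ is ``hit'' by at least one element of $S$ acting as $\varepsilon_\alpha(\text{unit})$. A secondary subtlety is justifying that normal generation descends correctly through the direct product decomposition; this is routine since each ${\rm Sp}_4$ over a finite local ring is normally generated by any one of the relevant $\varepsilon_\alpha(\bar u)$ and the other coordinates of that element are trivial, so one can normally generate each factor independently and then multiply. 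Once both conditions of Corollary~\ref{sufficient_cond_gen_set}(2) are verified, $S$ is a normal generating set of ${\rm Sp}_4(R)$ of cardinality $k$, and the analogous argument for $G_2(R)$ (invoking Lemma~\ref{G2_ideals}(2) and the $G_2$ case of Proposition~\ref{quotient_normal_generation}) completes the claim.
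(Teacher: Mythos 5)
Your overall plan coincides with the paper's: verify $\Pi(S)=\emptyset$ (which you do correctly) and verify that $S$ maps to a normally generating set of ${\rm Sp}_4(R)/N\cong{\rm Sp}_4(R/2R)$ via the congruence subgroup property, CRT, and Proposition~\ref{quotient_normal_generation}. However, the direct-product step in the second part has a genuine gap. You factor ${\rm Sp}_4(R/2R)$ completely into $\prod_i {\rm Sp}_4(R/\C P_i^{l_i})\times\prod_j {\rm Sp}_4(R/\C Q_j^{k_j})$ and assert that the images $\pi_{2R}(\varepsilon_\alpha(r_u))$ are ``nontrivial in exactly one coordinate.'' This is false: by construction none of the factors $x_i,v_q$ of $r_u$ lie in any $\C Q_j$, so $r_u$ is a \emph{unit} (not zero) modulo every $\C Q_j^{k_j}$, and hence $\pi_{2R}(\varepsilon_\alpha(r_u))$ is nontrivial simultaneously in the $\C P_u$-coordinate and in all $s$ of the $\C Q_j$-coordinates. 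Your fallback (``the factors are perfect'') also fails: each ${\rm Sp}_4(R/\C P_i^{l_i})$ for $i\le r$ surjects onto ${\rm Sp}_4(\mathbb{F}_2)$ and then onto $\mathbb{F}_2$, so it is not perfect. The general principle ``$S$ normally generates each factor after projection $\Rightarrow$ $S$ normally generates the product'' is simply not true (already $(1,1)$ in $\mathbb{F}_2\times\mathbb{F}_2$ fails), and the obstruction is precisely the $\mathbb{F}_2^{r}$-abelianization around which the whole $r(R)$ phenomenon is built — so it cannot be waved away as routine.

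The paper avoids the assembly problem by using a coarser CRT decomposition that lumps the $\C Q_j$ together: ${\rm Sp}_4(R/2R)=\prod_i {\rm Sp}_4(R/\C P_i^{l_i})\times {\rm Sp}_4\bigl(R/\prod_j\C Q_j^{k_j}\bigr)$, and then observes that $\pi_{2R}(\varepsilon_\alpha(r_u))$ actually lies in the direct-factor subgroup $H_u:={\rm Sp}_4\bigl(R/(\C P_u^{l_u}\prod_j\C Q_j^{k_j})\bigr)$ as a root element with unit argument. Proposition~\ref{quotient_normal_generation} is applied to the full product ring $R/(\C P_u^{l_u}\prod_j\C Q_j^{k_j})$, whose hypothesis ``at most one prime with residue field $\mathbb{F}_2$'' holds since only $\C P_u$ qualifies; the proposition handles the product structure internally (via the cited lemma that such semilocal rings are additively generated by units), giving that $\varepsilon_\alpha(r_u)$ normally generates all of $H_u$. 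Since the $H_u$ for $u=1,\dots,r$ generate ${\rm Sp}_4(R/2R)$, the claim follows. Your per-prime version could in principle be rescued by explicitly tracking images of the $\varepsilon_\alpha(r_u)$ in the $\mathbb{F}_2^r$-abelianization and showing they form a basis, but that Goursat-type bookkeeping is exactly the missing content — it is not supplied, and it is not ``routine'' in the sense you suggest.
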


According to Corollary~\ref{sufficient_cond_gen_set}(2), we have to fulfill two conditions for this claim to hold, first $\Pi(S)=\emptyset$ and second that 
$S$ maps to a normally generating subset of ${\rm Sp}_4(R)/N$ for $N:=\dl\varepsilon_{\phi}(2x)|x\in R,\phi\in B_2\dr$.

First, note that 
\begin{align*}
\Pi(\varepsilon_{\alpha}(r_u))= 
\begin{cases} 
\{\C P_1,\dots,\hat{\C P_u},\dots,\C P_r,V_{r+1},\dots,V_k\} &\text{ , if }1\leq u\leq r\\
\{\C P_1,\dots,\C P_r, V_{r+1},\dots,\hat{V_u},\dots,V_k\} & \text{ , if } r+1\leq u\leq k, 
\end{cases}
\end{align*}
where the hat denotes the omission of the corresponding prime. This implies $\Pi(S)=\emptyset.$

For the second condition note that Milnor's, Serre's and Bass' solution for the Congruence subgroup problem \cite[Theorem~3.6, Corollary~12.5]{MR244257} implies that 
\begin{equation*}
N=ker(\pi_{2R}:{\rm Sp}_4(R)\to {\rm Sp}_4(R/2R)). 
\end{equation*}
Hence it suffices to show that under the reduction homomorphism $\pi_{2R}:{\rm Sp}_4(R)\to{\rm Sp}_4(R/2R)$ the set $S$ maps to a normally generating set of 
${\rm Sp}_4(R/2R).$ Using the Chinese Remainder Theorem yields:
\begin{equation*}
{\rm Sp}_4(R/2R)=\prod_{i=1}^r {\rm Sp}_4\left(R/(\C P_i^{l_i})\right)\times{\rm Sp}_4\left(R/(\prod_{j=1}^s\C Q_j^{k_j})\right).
\end{equation*}
For $1\leq u\leq k$, it follows further: 
\begin{align*}
\pi_{2R}(\varepsilon_{\alpha}(r_u))=\left(\bigtimes_{i=1}^r(\varepsilon_{\alpha}(r_u+\C P_i^{l_i})),\varepsilon_{\alpha}(r_u+\prod_{j=1}^s\C Q_j^{k_j})\right)
\end{align*}
Depending on $u$ these elements look quite different. 

First for $u=1$ the element $r_1$ is divisible by all $\C P_i^{l_i}$ except for $i=1$. Hence this implies
\begin{equation*}
\pi_{2R}(\varepsilon_{\alpha}(r_1))=
\left(\varepsilon_{\alpha}(r_1+\C P_1^{l_1}),\bigtimes_{i=2}^r(\varepsilon_{\alpha}(0)),\varepsilon_{\alpha}(r_1+\prod_{j=1}^s\C Q_j^{k_j})\right)
=\left(\varepsilon_{\alpha}(r_1+\C P_1^{l_1}),1,\varepsilon_{\alpha}(r_1+\prod_{j=1}^s\C Q_j^{k_j})\right)
\end{equation*}
Phrased differently, it is the element $\varepsilon_{\alpha}(r_1+\C P_1^{l_1}\prod_{j=1}^s\C Q_j^{k_j})$ in the subgroup 
${\rm Sp}_4(R/(\C P_1^{l_1}\prod_{j=1}^s\C Q_j^{k_j})).$
Note that $r_1$ is not divisible by any of the $\C Q_j$ nor $\C P_1$ and hence $r_1+\C P_1^{l_1}\prod_{j=1}^s\C Q_j^{k_j}$ is a unit in 
$R/(\C P_1^{l_1}\prod_{j=1}^s\C Q_j^{k_j}).$ Thus by Proposition~\ref{quotient_normal_generation} the element 
$\varepsilon_{\alpha}(r_1+2R)$ normally generates the subgroup 
\begin{equation*}
{\rm Sp}_4\left(R/(\C P_1^{l_1}\prod_{j=1}^s\C Q_j^{k_j})\right)={\rm Sp}_4(R/\C P_1^{l_1})\times {\rm Sp}_4\left(R/(\prod_{j=1}^s\C Q_j^{k_j})\right)
\end{equation*}
of ${\rm Sp}_4(R/2R).$ 

The same way for $2\leq u\leq r$ it follows that the element $\varepsilon_{\alpha}(r_u+2R)$ normally generates the subgroup 
${\rm Sp}_4(R/\C P_u^{l_u})\times{\rm Sp}_4(R/(\prod_{j=1}^s\C Q_j^{k_j}))$ of ${\rm Sp}_4(R/2R).$
So already the subset $\{\varepsilon_{\alpha}(r_1)),\dots,\varepsilon_{\alpha}(r_r)\}$ of $S$ maps to a normally generating subset of ${\rm Sp}_4(R/2R)$ under $\pi_{2R}.$ This proves the claim.

\begin{Claim}The diameter of $\|\cdot\|_S$ is at least $k.$ As $|S|=k$ this proves the first part of the theorem for ${\rm Sp}_4(R).$
\end{Claim}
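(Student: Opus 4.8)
The plan is to exhibit one element whose $S$-norm is at least $k$, the natural candidate being the root element $\varepsilon_\alpha(1)$; since $\card{S}=k$ this at once gives $\Delta_k(G(\Phi,R))\ge\|G(\Phi,R)\|_S\ge\|\varepsilon_\alpha(1)\|_S\ge k$. The mechanism is exactly the pigeonhole on level ideals already used in Proposition~\ref{Lower_bounds_higher_rank}, now carried out over the $k$-element set $\C T:=\{\C P_1,\dots,\C P_r,V_{r+1},\dots,V_k\}$ of primes fixed in the construction of $S$.

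First I would record the elementary facts that make the argument run. For a proper maximal ideal $m$ of $R$, the condition $l(B)\subset m$ is equivalent to $\pi_m(B)$ being a scalar matrix; hence $\Pi(\cdot)$ is invariant under conjugation and under inversion, i.e. $\Pi(gBg^{-1})=\Pi(B^{-1})=\Pi(B)$, and a prime contained in $\Pi$ of each factor of a product is contained in $\Pi$ of the product. Moreover $l(\varepsilon_\alpha(r_u))=(r_u)$, so $\Pi(\varepsilon_\alpha(r_u))$ is precisely the set of prime divisors of $(r_u)$, which by the choice of the $r_u$ equals $\C T$ with exactly one prime deleted; thus $\C T\setminus\Pi(A^{\pm1})$ is a singleton for every $A\in S$, whereas $\C T\setminus\Pi(1)=\emptyset$. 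Finally $\Pi(\varepsilon_\alpha(1))=\emptyset$, because $\varepsilon_\alpha(1)$ is a nontrivial unipotent element and so non-scalar modulo every proper ideal (for $\Phi=B_2$ this is visible directly from $\varepsilon_\alpha(1)=I_4+(e_{12}-e_{43})$, whose level ideal is $R$).

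Then I would argue by contradiction: suppose $\|\varepsilon_\alpha(1)\|_S\le k-1$, and write $\varepsilon_\alpha(1)=\prod_{i=1}^{k-1}g_is_ig_i^{-1}$ with $g_i\in G(\Phi,R)$ and $s_i\in S\cup S^{-1}\cup\{1\}$. Each set $\C T\setminus\Pi(g_is_ig_i^{-1})=\C T\setminus\Pi(s_i)$ has at most one element, so $\bigcup_{i=1}^{k-1}\bigl(\C T\setminus\Pi(g_is_ig_i^{-1})\bigr)$ has at most $k-1<\card{\C T}$ elements, and I may pick a prime $\C Q\in\C T$ lying in $\Pi(g_is_ig_i^{-1})$ for every $i$. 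Then $\pi_{\C Q}(g_is_ig_i^{-1})$ is scalar for each $i$, hence so is $\pi_{\C Q}(\varepsilon_\alpha(1))=\prod_i\pi_{\C Q}(g_is_ig_i^{-1})$, i.e. $\C Q\in\Pi(\varepsilon_\alpha(1))=\emptyset$, a contradiction. Thus $\|\varepsilon_\alpha(1)\|_S\ge k$, proving the claim for ${\rm Sp}_4(R)$; the case of $G_2(R)$ is word-for-word the same with $\alpha$ the short simple root of $G_2$. The only point requiring care is the bookkeeping around $\Pi$ --- its conjugation and inversion invariance, its compatibility with products, and the emptiness of $\Pi(\varepsilon_\alpha(1))$ --- all of which reduce to the characterisation of $l(B)\subset m$ by scalarity of $\pi_m(B)$; no idea beyond Proposition~\ref{Lower_bounds_higher_rank} is needed here, the genuinely new content of the theorem having already been used up in the preceding Claim that $S$ normally generates $G(\Phi,R)$.
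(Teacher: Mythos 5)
Your proof is correct and follows the paper's own argument essentially verbatim: both assume $\|\varepsilon_\alpha(1)\|_S\leq k-1$, write $\varepsilon_\alpha(1)$ as a product of $k-1$ conjugates of elements of $S\cup S^{-1}\cup\{1\}$, and apply pigeonhole on the $k$-element prime set $\{\C P_1,\dots,\C P_r,V_{r+1},\dots,V_k\}$ using the conjugation-invariance and product-compatibility of $\Pi$ together with $\Pi(\varepsilon_\alpha(1))=\emptyset$ to reach a contradiction. Your rendering via complements $\C T\setminus\Pi(\cdot)$ rather than intersections $\bigcap\Pi(\cdot)$ is a cosmetic difference only; the added spelling-out of why $\Pi$ is conjugation-, inversion-, and product-compatible is a fair expansion of details the paper leaves implicit.
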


Assume for contradiction that ${\rm diam}(\|\cdot\|_S)\leq k-1.$ Then there are elements $s_1,\dots,s_{k-1}\in S\cup S^{-1}\cup\{1\}$ and 
$g_1,\dots,g_{k-1}\in{\rm Sp}_4(R)$ with
\begin{equation*}
\varepsilon_{\alpha}(1)=\prod_{1\leq i\leq k-1}s_i^{g_i}.
\end{equation*} 
But the set $\Pi(s_i^{g_i})=\Pi(s_i)$ contains at least $k-1$ elements of the set $\{\C P_1,\dots,\C P_r, V_{r+1},\dots,V_k\}$ and hence 
$\bigcap_{1\leq i\leq k-1}\Pi(s_i^{g_i})$ is not empty and so $\emptyset\neq\bigcap_{1\leq i\leq k-1}\Pi(s_i^{g_i})\subset\Pi(\varepsilon_{\alpha}(1))=\emptyset.$
This contradiction proves $\|\varepsilon_{\alpha}(1)\|_S\geq k$.
\end{proof}

For the second part of Theorem~\ref{lower_bounds_rank2}, note the following: 

\begin{Lemma}
There is an epimorphism ${\rm Sp}_4(\mathbb{F}_2)\to \mathbb{F}_2$ with $\varepsilon_{\phi}(a)\mapsto a$ for all $a\in\mathbb{F}_2$ and $\phi\in B_2.$
Similarly there is an epimorphism $G_2(\mathbb{F}_2)\to\mathbb{F}_2$ with 
\begin{align*}
\varepsilon_{\phi}(a)\mapsto 
\begin{cases} 
a &\text{ ,if }\phi\in G_2\text{ short}\\
0 &\text{ ,if }\phi\in G_2\text{ long}
\end{cases}
\end{align*}
\end{Lemma}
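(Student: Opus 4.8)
The plan is to obtain the two required maps as the unique nonzero homomorphisms $f\colon G(\Phi,\mathbb{F}_2)\to\mathbb{F}_2$, and then to evaluate them on root elements purely formally, using Weyl conjugacy of roots of a fixed length together with the commutator formulas of Lemma~\ref{commutator_relations} read modulo $2$.

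First I would record the structural input that $\mathrm{Sp}_4(\mathbb{F}_2)$ and $G_2(\mathbb{F}_2)$ are not perfect: their commutator subgroups ($A_6$ inside $\mathrm{Sp}_4(\mathbb{F}_2)\cong S_6$, and the simple group $\mathrm{U}_3(3)$ inside $G_2(\mathbb{F}_2)$) have index exactly $2$. Hence in each case $G(\Phi,\mathbb{F}_2)^{\mathrm{ab}}\cong\mathbb{Z}/2\cong\mathbb{F}_2$, so there is exactly one surjection $f\colon G(\Phi,\mathbb{F}_2)\to\mathbb{F}_2$, which is automatically onto. Since $\mathbb{F}_2=\{0,1\}$ and $\varepsilon_\phi(0)=I$, it only remains to compute $f(\varepsilon_\phi(1))$ for each $\phi\in\Phi$. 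Because $G(\Phi,\mathbb{F}_2)$ is generated by its root elements ($\mathbb{F}_2$ is semilocal and $\Phi$ has rank at least $2$, so $G=E$ by Proposition~\ref{root_generation_semilocal_rings}), and because the Weyl group acts transitively on the roots of a given length and is realised inside $G(\Phi,\mathbb{F}_2)$ by the $w_\chi(1)$ — so that $\varepsilon_\phi(1)$ is conjugate to $\varepsilon_{w_\chi(\phi)}(1)$, the sign ambiguity being invisible over $\mathbb{F}_2$ — the value $f(\varepsilon_\phi(1))$ depends only on whether $\phi$ is short or long. Call these two values $s$ and $\ell$ in $\mathbb{F}_2$. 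As $f$ is onto and the root elements generate the group, $(s,\ell)\neq(0,0)$.

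For $\Phi=G_2$, Lemma~\ref{commutator_relations}(4) gives $(\varepsilon_{2\alpha+\beta}(1),\varepsilon_\alpha(1))=\varepsilon_{3\alpha+\beta}(\pm 3)=\varepsilon_{3\alpha+\beta}(1)$ over $\mathbb{F}_2$, exhibiting the long root element $\varepsilon_{3\alpha+\beta}(1)$ as a commutator; hence $\ell=f(\varepsilon_{3\alpha+\beta}(1))=0$, so $s=1$. This is precisely the stated map $\varepsilon_\phi(a)\mapsto a$ for $\phi$ short and $\varepsilon_\phi(a)\mapsto 0$ for $\phi$ long. For $\Phi=B_2$, Lemma~\ref{commutator_relations}(3) gives $(\varepsilon_\beta(1),\varepsilon_\alpha(1))=\varepsilon_{\alpha+\beta}(\pm 1)\varepsilon_{2\alpha+\beta}(\pm 1)=\varepsilon_{\alpha+\beta}(1)\varepsilon_{2\alpha+\beta}(1)$ over $\mathbb{F}_2$; applying the homomorphism $f$ to this identity (the left-hand side is a commutator, hence in $\ker f$), and recalling that $\alpha+\beta$ is short while $2\alpha+\beta$ is long, yields $s+\ell=0$, i.e. $s=\ell$; together with $(s,\ell)\neq(0,0)$ this forces $s=\ell=1$, which is the assertion $\varepsilon_\phi(a)\mapsto a$ for all $\phi\in B_2$. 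Surjectivity of $f$ was built in from the start, so these maps are the claimed epimorphisms.

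The only genuinely external ingredient is the non-perfectness of $\mathrm{Sp}_4(\mathbb{F}_2)$ and $G_2(\mathbb{F}_2)$ — equivalently, the bare existence of a surjection onto $\mathbb{F}_2$ — and this is where I expect the (very mild) main obstacle to sit; once it is in place, the commutator relations modulo $2$ rigidly determine $f$ on all root elements as above. If one prefers not to quote $\mathrm{Sp}_4(\mathbb{F}_2)\cong S_6$, one can instead observe that $\mathrm{Sp}_4(\mathbb{F}_2)$ is generated by symplectic transvections, that these form a single conjugacy class containing the long root element $\varepsilon_\beta(1)$, and that they cannot all lie in the derived subgroup, since $\mathrm{Sp}_4(\mathbb{F}_2)$ acts faithfully on the six quadratic refinements of minus type of the symplectic form with a transvection acting as a transposition; for $G_2$ the commutator identity above already displays long root elements as commutators, so there the non-perfectness of $G_2(\mathbb{F}_2)$ is the only missing fact.
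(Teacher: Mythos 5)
Your proof is correct, but it takes a genuinely different route from the paper's. The paper uses Steinberg's presentation of $G(\Phi,\mathbb{F}_2)$ (Steinberg, Theorem~8, Chapter~6): the group is presented by the involutions $\varepsilon_\phi(1)$, $\phi\in\Phi$, subject to the commutator relations of Lemma~\ref{commutator_relations} specialized at $t=1$ over $\mathbb{F}_2$. The epimorphism is then \emph{constructed} by declaring its values on generators and checking directly that each relation is respected; over $\mathbb{F}_2$ the structure constants $2$ and $3$ collapse to $0$ and $1$, so for $B_2$ every relation maps to $0=0$, and for $G_2$ the short/long dichotomy in the assignment makes the right-hand sides (which always contain an even number of short-root factors modulo $2$) vanish along with the left-hand commutators. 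Your argument instead \emph{imports} the existence and uniqueness of the nonzero surjection $f\colon G(\Phi,\mathbb{F}_2)\to\mathbb{F}_2$ from the classification of these finite groups ($\mathrm{Sp}_4(\mathbb{F}_2)\cong S_6$, and $[G_2(\mathbb{F}_2):\mathrm{U}_3(3)]=2$), and then \emph{determines} $f$ on root elements a posteriori: Weyl conjugacy of roots of equal length (with the $\pm 1$ ambiguity invisible over $\mathbb{F}_2$) shows that $f(\varepsilon_\phi(1))$ depends only on $|\phi|$, and a single well-chosen commutator relation in each case forces the two values $(s,\ell)$. The two approaches are complementary: the paper's is self-contained within Chevalley-group theory and requires no knowledge of the abstract isomorphism type of the finite groups, whereas yours is shorter once you accept the group-theoretic input, and has the pedagogical advantage of making transparent \emph{why} the recipe must be short-versus-long. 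One small caution on your alternative, presentation-free derivation of non-perfectness: your transvection argument covers $\mathrm{Sp}_4(\mathbb{F}_2)$ only, and for $G_2(\mathbb{F}_2)$ you still have to cite simplicity of the index-two subgroup $\mathrm{U}_3(3)$ (or equivalently the Atlas), so for that case you have not actually removed the external input.
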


\begin{proof}
We only do the case ${\rm Sp}_4(\mathbb{F}_2)$ again. According to \cite[Theorem~8;Chapter~6,p.~43]{MR3616493}, the group ${\rm Sp}_4(\mathbb{F}_2)$ is isomorphic to the group $G$ generated by elements of order $2$ named $\varepsilon_{\alpha}(1)$,$\varepsilon_{\beta}(1)$,
$\varepsilon_{\alpha+\beta}(1)$, $\varepsilon_{2\alpha+\beta}(1)$, $\varepsilon_{-\alpha}(1)$,$\varepsilon_{-\beta}(1)$ and $\varepsilon_{-\alpha-\beta}(1),\varepsilon_{-2\alpha-\beta}(1)$ subject to relations of the form
\begin{align*}
&(\varepsilon_{\phi}(1),\varepsilon_{\psi}(1))=1\text{, if }\phi+\psi\in B_2\text{ and no other sum of positive multiples of }\psi\text{ and }\phi\text{ is a root}\\
&(\varepsilon_{\phi}(1),\varepsilon_{\psi}(1))=1\text{, if }\phi+\psi\notin B_2\text{ and }\phi+\psi\neq 0\\
&(\varepsilon_{\phi}(1),\varepsilon_{\psi}(1))=\varepsilon_{\phi+\psi}(1)\varepsilon_{\tau}(1)\text{, if }\phi+\psi\in B_2
\text{ and }\tau=\phi+2\psi\text{ or }2\phi+\psi\in B_2.
\end{align*}

The map $\{\varepsilon_{\phi}(a)\mapsto a\}$ sends both sides of these relations to the same element, namely $0$, 
and hence we get an epimorphism as required.
\end{proof}

\begin{remark}
\hfill
\begin{enumerate}
\item{The group ${\rm Sp}_4(\mathbb{F}_2)$ is isomorphic to the permutation group $S_6$ and the epimorphism in the lemma is the 
sign homomorphism $S_6\to\mathbb{F}_2.$}
\item{The group $G_2(\mathbb{F}_2)$ has a simple subgroup $U$ with $[G_2:U]=2$. The homomorphism is the map $G_2(\mathbb{F}_2)\to G_2(\mathbb{F}_2)/U=\mathbb{F}_2.$
The group $U$ is isomorphic to the twisted group $^2A_2(\mathbb{F}_9).$
} 
\end{enumerate}
\end{remark}

Using this, the second part of the theorem follows:

\begin{proof}
We restrict ourselves to the case ${\rm Sp}_4(R)$ again. Let $2R=(\prod_{i=1}^r\C P_i^{l_i})(\prod_{j=1}^s\C Q_j^{k_j})$ be given as in the proof of the first part of the theorem. Using the Chinese Remainder Theorem, we know that the map 
\begin{equation*}
{\rm Sp}_4(R)\twoheadrightarrow{\rm Sp}_4(R/2R)=\prod_{i=1}^r{\rm Sp}_4(R/(\C P_i^{l_i}))\times\prod_{i=1}^r{\rm Sp}_4(R/(\C Q_j^{k_j}))
\twoheadrightarrow\prod_{i=1}^r {\rm Sp}_4(R/\C P_i)={\rm Sp}_4(\mathbb{F}_2)^r
\end{equation*}
is an epimorphism. So composing with the epimorphism ${\rm Sp}_4(\mathbb{F}_2)\to\mathbb{F}_2$, we obtain an epimorphism $g:{\rm Sp}_4(R)\to\mathbb{F}_2^r.$ 
This suffices to prove the second part of the theorem, because a given normally generating set $S$ of ${\rm Sp}_4(R)$ with $|S|\leq r-1$ would map to a generating set of $\mathbb{F}_2^r$ with less than $r$ elements. The group $\mathbb{F}_2^r$ cannot be generated by less than $r$ elements however. 
\end{proof}

This finishes the proof of Theorem~\ref{lower_bounds_rank2}. We note the following corollary:

\begin{Corollary}
Let $R$ be a ring of S-algebraic integers and $r(R)$ defined as in Theorem~\ref{lower_bounds_rank2}. Then both ${\rm Sp}_4(R)$ and $G_2(R)$ have 
abelianization $\mathbb{F}_2^r.$
\end{Corollary}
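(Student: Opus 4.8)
The plan is to identify the abelianization $G^{\mathrm{ab}}:=G/[G,G]$ of $G:={\rm Sp}_4(R)$ (resp. $G_2(R)$) by trapping it between $\mathbb{F}_2^{r}$ on both sides. For the surjection $G^{\mathrm{ab}}\twoheadrightarrow\mathbb{F}_2^{r}$ there is nothing new to prove: when $R\neq 2R$ the proof of Theorem~\ref{lower_bounds_rank2}(2) already produces an epimorphism $g\colon G\twoheadrightarrow\mathbb{F}_2^{r}$ — namely $\pi_{2R}$ followed by the projection of $G(\Phi,R/2R)$ onto the factors $G(\Phi,R/\C P_i)=G(\Phi,\mathbb{F}_2)$ over the primes $\C P_i\mid 2R$ with $R/\C P_i=\mathbb{F}_2$, followed by the sign epimorphisms ${\rm Sp}_4(\mathbb{F}_2)\to\mathbb{F}_2$ resp.\ $G_2(\mathbb{F}_2)\to\mathbb{F}_2$ — and since $\mathbb{F}_2^{r}$ is abelian this factors through $G^{\mathrm{ab}}$. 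When $R=2R$ we have $r=0$ and only perfectness of $G$ is being claimed, which will drop out of the other half of the argument.

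For the reverse direction I would first record that $G$ is generated by root elements by Theorem~\ref{Tavgen}, so $G^{\mathrm{ab}}$ is generated by the images $\bar\varepsilon_\phi(x)$ of the root elements. Since conjugation is trivial in $G^{\mathrm{ab}}$, the relation $w_\gamma\varepsilon_\phi(x)w_\gamma^{-1}=\varepsilon_{w_\gamma(\phi)}(\pm x)$ of \cite[Lemma~20(b), Chapter~3]{MR3616493} together with the additivity of $\varepsilon_\phi$ (and $\varepsilon_\phi(-x)=\varepsilon_\phi(x)^{-1}$) shows that each $x\mapsto\bar\varepsilon_\phi(x)$ is an additive homomorphism $(R,+)\to G^{\mathrm{ab}}$ and that, within each Weyl orbit of roots, these homomorphisms agree up to a sign on the argument. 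So everything reduces to the single map $e:=\bar\varepsilon_\alpha\colon(R,+)\to G^{\mathrm{ab}}$, $\alpha$ the short simple root, and it suffices to show that $e(R)$ generates $G^{\mathrm{ab}}$ and that $e$ kills $2R$ and the ideal $I:=\langle a-a^{2}\mid a\in R\rangle$.

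The key step is to feed the commutator formulas of Lemma~\ref{commutator_relations}(3) (for $B_2$) resp.\ (4) (for $G_2$) into $G^{\mathrm{ab}}$, where every commutator is trivial. For $G_2$, the relation $(\varepsilon_{3\alpha+\beta}(b),\varepsilon_\beta(a))=\varepsilon_{3\alpha+2\beta}(\pm ab)$ forces $\bar\varepsilon_{3\alpha+2\beta}\equiv 0$, hence by the Weyl-orbit remark all long root elements vanish in $G^{\mathrm{ab}}$; then $(\varepsilon_{\alpha+\beta}(b),\varepsilon_\alpha(a))=\varepsilon_{2\alpha+\beta}(\pm 2ab)\varepsilon_{3\alpha+\beta}(\pm 3a^{2}b)\varepsilon_{3\alpha+2\beta}(\pm 3ab^{2})$ gives $e(2R)=0$ (using that $2\alpha+\beta$ is short), and $(\varepsilon_\beta(b),\varepsilon_\alpha(a))=\varepsilon_{\alpha+\beta}(\pm ab)\varepsilon_{2\alpha+\beta}(\pm a^{2}b)\cdots$, read modulo its long part, gives $e(ab)=\pm e(a^{2}b)$ for all $a,b\in R$, whence $e$ vanishes on $I$ once $e(2R)=0$ is used. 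For $B_2$ one argues along the same lines: $(\varepsilon_{\alpha+\beta}(b),\varepsilon_\alpha(a))=\varepsilon_{2\alpha+\beta}(\pm 2ab)$ gives $\bar\varepsilon_{2\alpha+\beta}(2R)=0$, while $(\varepsilon_\beta(b),\varepsilon_\alpha(a))=\varepsilon_{\alpha+\beta}(\pm ab)\varepsilon_{2\alpha+\beta}(\pm a^{2}b)$, evaluated at $a=1$, yields $\bar\varepsilon_{2\alpha+\beta}=\pm\bar\varepsilon_{\alpha+\beta}$ (so, since $\alpha+\beta$ and $\beta$ are Weyl-conjugate to $\alpha$ and $2\alpha+\beta$ respectively, every root element equals $\pm e(x)$), and at general $a$ yields $e(2R)=0$ and $e(I)=0$. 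In either case $G^{\mathrm{ab}}=e(R)$ is a quotient of the finite ring $R/(2R+I)$.

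It then remains to identify $R/(2R+I)\cong\mathbb{F}_2^{r}$. Since $2R$ has finite index in $R$, we have $R/2R\cong\prod_t R/\C P_t^{e_t}$ over the primes $\C P_t\mid 2R$, each factor local with residue field $R/\C P_t=\mathbb{F}_{2^{f_t}}$; in a factor with $f_t>1$ some $a$ reduces to an element of $\mathbb{F}_{2^{f_t}}\setminus\mathbb{F}_2$, making $a-a^{2}$ a unit there, so $I$ annihilates that factor, whereas in a factor with $f_t=1$ every $a-a^{2}$ lies in the maximal ideal and, for $a$ a uniformizer, generates it — here one uses that $R$ is a Dedekind domain, so that maximal ideal is principal — so the factor contributes exactly $\mathbb{F}_2$. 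This gives $R/(2R+I)\cong\mathbb{F}_2^{\lvert\{t\,:\,f_t=1\}\rvert}=\mathbb{F}_2^{r}$, so $G^{\mathrm{ab}}$ is a quotient of $\mathbb{F}_2^{r}$, and combined with the surjection above, $G^{\mathrm{ab}}\cong\mathbb{F}_2^{r}$. The two places demanding care are the pinning signs — one must check that no choice of the fixed but unspecified signs in the commutator and Weyl formulas can block the vanishing statements — and the identification of the non-idempotency ideal $\langle a-a^{2}\rangle$ of a finite local ring with residue field $\mathbb{F}_2$ with its maximal ideal, which is the one point where Dedekind-ness of $R$ is needed.
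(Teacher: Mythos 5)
Your proof is correct, and it takes a genuinely different route for the upper bound than the paper does. Both arguments obtain the surjection $G^{\mathrm{ab}}\twoheadrightarrow\mathbb{F}_2^{r}$ from the epimorphism $\pi_{2R}$ followed by projection to the $\mathbb{F}_2$-residue factors and the sign map, exactly as in the proof of Theorem~\ref{lower_bounds_rank2}(2). The difference is entirely in the reverse inequality. The paper shows $N=\dl\varepsilon_{\phi}(2x)\dr\subset[G,G]$ using the commutator identities underlying Lemma~\ref{B2_ideals}(2),(4), so that $G^{\mathrm{ab}}$ is a quotient of the finite group $G/N$ and is $2$-torsion since root groups generate; this gives $G^{\mathrm{ab}}\cong\mathbb{F}_2^{r'}$ for some $r'\geq r$, and $r'\leq r$ follows from the \emph{existence} of an $r$-element normal generating set produced in Theorem~\ref{lower_bounds_rank2}(1), since $\mathbb{F}_2^{r'}$ cannot be generated by fewer than $r'$ elements. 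You instead pin down $G^{\mathrm{ab}}$ explicitly: the Weyl relations collapse all root maps onto $e=\bar\varepsilon_\alpha$ (for $G_2$ the long roots die outright via $(\varepsilon_{3\alpha+\beta}(b),\varepsilon_\beta(a))=\varepsilon_{3\alpha+2\beta}(\pm ab)$, for $B_2$ they are identified up to sign with the short ones via the $a=1$ specialization), the remaining commutator relations force $e(2R)=e(\langle a-a^2\rangle)=0$, and the CRT/local analysis of the primes above $2$ identifies $R/(2R+\langle a-a^2\rangle)$ with $\mathbb{F}_2^{r}$, using Dedekind-ness to see that in a residue factor with $f_t=1$ the non-idempotency ideal is exactly the (principal) maximal ideal. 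The quotient and the surjection then squeeze $G^{\mathrm{ab}}$ to $\mathbb{F}_2^{r}$. Your version is more computational but decouples the abelianization from the construction in Theorem~\ref{lower_bounds_rank2}(1): it needs Tavgen only for generation by root elements, not the full apparatus of constructing normal generating sets, and it exhibits $G^{\mathrm{ab}}$ as a concrete ring quotient $R/(2R+\langle a-a^2\rangle)$, which is a cleaner structural statement. The sign-bookkeeping caveat you raise is real but harmless: whenever the signs conspire against you, the discrepancy lies in $2R$, which you have already killed.
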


\begin{proof}
We only do the case ${\rm Sp}_4(R).$ Note that $\dl\varepsilon_{\phi}(2x)|x\in R,\phi\in B_2\dr\subset ({\rm Sp}_4(R),{\rm Sp}_4(R))$ 
by Lemma~\ref{B2_ideals}(4) and (2) and further that ${\rm Sp}_4(R)$ is boundedly generated by root elements by Theorem~\ref{Tavgen}. Thus the abelianization $A(R)$ of ${\rm Sp}_4(R)$ 
is a finitely generated, $2$-torsion group. Let $r':=dim_{\mathbb{F}_2}(A(R)).$ The proof of Theorem~\ref{lower_bounds_rank2} implies
that $A(R)$ has the quotient $\mathbb{F}_2^r$ and hence $r'\geq r.$ Now on the other hand $r'>r$ is impossible, because it would imply as in the proof of the second part of Theorem~\ref{lower_bounds_rank2} that there are no normal generating sets of ${\rm Sp}_4(R)$ with precisely $r$ elements, which is wrong.  
\end{proof}

For rings of quadratic integers it is known how $2$ splits into primes and hence we can give the following complete description of $r(R)$:

\begin{Corollary}
\label{other_quadratic_roots}
Let $D$ be a square-free integer and $R$ the ring of algebraic integers in $\mathbb{Q}[\sqrt{D}].$ Then the value of $r(R)$ is 
\begin{enumerate}
\item{$r(R)=1$ precisely if $D\equiv 2,3,5,6,7\text{ mod }8$, so $\Delta_1({\rm Sp}_4(R)),\Delta_1(G_2(R))\neq -\infty.$}
\item{$r(R)=2$ precisely if $D\equiv 1\text{ mod }8$, so $\Delta_1({\rm Sp}_4(R))=\Delta_1(G_2(R))=-\infty$ and $\Delta_2({\rm Sp}_4(R))=\Delta_2(G_2(R))>-\infty.$}
\end{enumerate}
\end{Corollary}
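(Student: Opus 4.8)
The plan is to translate the computation of $r(R)$ into the classical description of how the rational prime $2$ factors in a quadratic field, and then to read the statement off from Theorem~\ref{lower_bounds_rank2}. Write $K=\mathbb{Q}[\sqrt D]$, so that $R=\mathcal{O}_K$ is its full ring of integers; since $D$ is square-free, $R=\mathbb{Z}[\sqrt D]$ when $D\equiv 2,3\pmod 4$ and $R=\mathbb{Z}[\frac{1+\sqrt D}{2}]$ when $D\equiv 1\pmod 4$. In either case $R=\mathbb{Z}[\vartheta]$ for a single element $\vartheta$ whose minimal polynomial $f\in\mathbb{Z}[x]$ is monic of degree $2$, so Dedekind's factorization theorem applies directly: the prime ideals of $R$ dividing $2R$ correspond to the monic irreducible factors of $\bar f\in\mathbb{F}_2[x]$, the prime $\mathcal{P}$ attached to a factor $\bar g$ having $R/\mathcal{P}\cong\mathbb{F}_2[x]/(\bar g)$ and appearing in $2R$ with the multiplicity of $\bar g$ in $\bar f$. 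Consequently $R/\mathcal{P}=\mathbb{F}_2$ exactly for the \emph{linear} factors of $\bar f$, so $r(R)$ equals the number of distinct linear factors of $\bar f$ over $\mathbb{F}_2$.

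Next I would run through the residues of $D$ modulo $8$. If $D\equiv 2\pmod 4$ then $f\equiv x^{2}\pmod 2$, and if $D\equiv 3\pmod 4$ then $f\equiv(x+1)^{2}\pmod 2$ since $D$ is odd; in both cases $\bar f$ is the square of a single linear polynomial, $2R=\mathcal{P}^{2}$ with $R/\mathcal{P}=\mathbb{F}_2$, and hence $r(R)=1$. These are precisely the classes $D\equiv 2,3,6,7\pmod 8$. If $D\equiv 1\pmod 4$ then $f=x^{2}-x+\frac{1-D}{4}$, and the reduction $\bar f$ is governed by the parity of $\frac{1-D}{4}$: for $D\equiv 1\pmod 8$ this constant is even, $\bar f=x(x+1)$ has two distinct linear factors, $2R=\mathcal{P}_1\mathcal{P}_2$ splits with $R/\mathcal{P}_i=\mathbb{F}_2$, and $r(R)=2$; for $D\equiv 5\pmod 8$ it is odd, $\bar f=x^{2}+x+1$ is irreducible over $\mathbb{F}_2$, $2R$ is inert with residue field $\mathbb{F}_4\neq\mathbb{F}_2$, and $r(R)=0$.

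Finally I would invoke Theorem~\ref{lower_bounds_rank2} with $\Phi=B_2$ and with $\Phi=G_2$; its hypothesis $R\neq 2R$ holds because $2R$ is a proper non-zero ideal of the Dedekind domain $R$ (equivalently, $2$ is not a unit of $\mathcal{O}_K$). For $D\equiv 2,3,6,7\pmod 8$ one has $r(R)=1$, so part~(1) of that theorem gives $\Delta_1(G(\Phi,R))\geq 1$, i.e.\ $\Delta_1(\mathrm{Sp}_4(R)),\Delta_1(G_2(R))\neq-\infty$; for $D\equiv 5\pmod 8$ one has $r(R)=0$, so part~(2) is vacuous and part~(1) already applies at $k=1$, again giving $\Delta_1\neq-\infty$. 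For $D\equiv 1\pmod 8$ one has $r(R)=2$, so part~(2) gives $\Delta_1(\mathrm{Sp}_4(R))=\Delta_1(G_2(R))=-\infty$ while part~(1) gives $\Delta_2(\mathrm{Sp}_4(R)),\Delta_2(G_2(R))\geq 2>-\infty$. Together with the previous paragraph this yields the asserted dichotomy.

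The argument is in the end pure bookkeeping and I do not expect a genuine obstacle. The one point that needs care is the class $D\equiv 5\pmod 8$: there $2$ is \emph{inert} rather than ramified, so strictly $r(R)=0$ (the stated conclusion $\Delta_1\neq-\infty$ is of course unchanged), and one must be sure to apply Dedekind's theorem to an honest $\mathbb{Z}$-algebra generator of the \emph{maximal} order $\mathcal{O}_K$ — which is exactly why the two possible shapes of $R$, namely $\mathbb{Z}[\sqrt D]$ and $\mathbb{Z}[\frac{1+\sqrt D}{2}]$, have to be separated from the start.
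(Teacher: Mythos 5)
Your proof takes essentially the same route as the paper's — classify how $2$ factors in $\mathcal{O}_K$ and read off $r(R)$ — but you derive the classification directly from Dedekind's factorization criterion instead of citing it, which is more self-contained and, crucially, more careful. You have correctly spotted an error in both the corollary and the paper's own proof: for $D\equiv 5\pmod 8$ the prime $2$ is inert, so $2R$ is itself a prime ideal with residue field $\mathbb{F}_4\neq\mathbb{F}_2$, and hence $r(R)=0$, not $r(R)=1$. The paper's proof cites \cite[Theorem~25]{MR3822326} for the inert/ramified/split trichotomy and then asserts that ``in the first two cases, this implies $r(R)=1$''; that is true only in the ramified case, where the unique prime above $2$ has residue degree $1$. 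As you note, the downstream conclusion is unaffected: $r(R)=0\leq 1$ places $k=1$ in the range where part~(1) of Theorem~\ref{lower_bounds_rank2} applies, so $\Delta_1(\mathrm{Sp}_4(R))$ and $\Delta_1(G_2(R))$ are still bounded below by $1$, hence $\neq -\infty$, when $D\equiv 5\pmod 8$. Item~(1) of the corollary should therefore say $r(R)=1$ for $D\equiv 2,3,6,7\pmod 8$ and $r(R)=0$ for $D\equiv 5\pmod 8$, with the same conclusion about $\Delta_1$ in both cases.
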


\begin{proof}
We obtain from \cite[Theorem~25]{MR3822326} that the ideal $2R$ splits and ramifies in $R$ as follows:
\begin{enumerate}
\item{$2R$ is inert precisely if $D\equiv 5\text{ mod }8.$}
\item{$2R$ ramifies precisely if $D\equiv 2,3,6,7\text{ mod }8.$}
\item{$2R$ splits precisely if $D\equiv 1\text{ mod }8.$}
\end{enumerate}
In the first two cases, this implies $r(R)=1$ and in the third case $r(R)=2.$
\end{proof}

We finish this section with the following explicit example:

\begin{Corollary}\label{small_generating_sets_Sp4}
Let $R=\mathbb{Z}[\frac{1+\sqrt{-7}}{2}]$ be the ring of algebraic integers in the number field $\mathbb{Q}[\sqrt{-7}].$ Then ${\rm Sp}_4(R)$ and $G_2(R)$ are not generated by a single conjugacy class and so $\Delta_{1}({\rm Sp}_4(R))=\Delta_{1}(G_2(R))=-\infty$.
\end{Corollary}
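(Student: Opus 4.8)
The plan is to deduce the corollary directly from Theorem~\ref{lower_bounds_rank2} and Corollary~\ref{other_quadratic_roots}, since all the real work has already been carried out there; the only genuinely new input needed is the splitting behaviour of $2$ in $R$. First I would record that $R=\mathbb{Z}[\frac{1+\sqrt{-7}}{2}]$ is the ring of algebraic integers of $\mathbb{Q}[\sqrt{-7}]$, so it is a ring of S-algebraic integers with $D=-7$ square-free and $-7\equiv 1\pmod 8$. Concretely, the identity $\frac{1+\sqrt{-7}}{2}\cdot\frac{1-\sqrt{-7}}{2}=\frac{1-(-7)}{4}=2$ exhibits $2R=\C P_1\C P_2$ as a product of two distinct prime ideals, each with residue field $\mathbb{F}_2$; in particular $R\neq 2R$ and, in the notation of Theorem~\ref{lower_bounds_rank2}, $r(R)=2$. (This is also exactly the content of Corollary~\ref{other_quadratic_roots}(2), applied with $D=-7$.)

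With $r(R)=2$ in hand, I would invoke Theorem~\ref{lower_bounds_rank2}(2) for $\Phi=B_2$ and for $\Phi=G_2$: it gives $\Delta_k(G(\Phi,R))=-\infty$ for every $k<r(R)=2$, hence in particular $\Delta_1({\rm Sp}_4(R))=\Delta_1(G_2(R))=-\infty$. (The mechanism behind this, recalled from the proof of that part of the theorem, is that the two split primes above $2$ yield a surjection ${\rm Sp}_4(R)\twoheadrightarrow{\rm Sp}_4(\mathbb{F}_2)^2\twoheadrightarrow\mathbb{F}_2^2$, and likewise for $G_2(R)$, so a normally generating set of cardinality one would map onto a generating set of $\mathbb{F}_2^2$ of size at most one, which is impossible. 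One could run this argument directly instead of citing Theorem~\ref{lower_bounds_rank2}, but citing it is cleaner.)

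Finally I would translate the statement $\Delta_1=-\infty$ back into the language of conjugacy classes. For any group $G$ and $g\in G$, the normal closure $\dl\{g\}\dr$ is precisely the subgroup generated by the conjugacy class of $g$ (together with inverses), so $G$ is generated by a single conjugacy class exactly when it admits a normally generating set of cardinality at most one; by definition $\Delta_1(G)=-\infty$ means no such set exists. Hence $\Delta_1({\rm Sp}_4(R))=\Delta_1(G_2(R))=-\infty$ says exactly that neither ${\rm Sp}_4(R)$ nor $G_2(R)$ is generated by a single conjugacy class, which is the claim.

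I do not expect a serious obstacle here: the proof is a direct specialisation of results already established in the paper. The one point requiring care is the arithmetic input — verifying that $2$ splits (equivalently $-7\equiv 1\pmod 8$), so that $r(R)=2$ rather than $1$ — since it is precisely this that rules out the value $k=1$.
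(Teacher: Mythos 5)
Your argument is correct and is essentially the paper's intended proof: the corollary is placed immediately after Corollary~\ref{other_quadratic_roots}, and follows by observing $-7\equiv 1\pmod 8$, hence $r(R)=2$, hence $\Delta_1=-\infty$ by Theorem~\ref{lower_bounds_rank2}(2). Your explicit factorisation $\frac{1+\sqrt{-7}}{2}\cdot\frac{1-\sqrt{-7}}{2}=2$ and the unpacking of $\Delta_1=-\infty$ as non-generation by a single conjugacy class are both correct and make the deduction fully transparent.
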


\section*{Closing remarks}

This paper gives rise to a question regarding generalizations of the stated results. It is natural to ask how the results generalize to rings $R$ of integers of global fields of positive characteristic instead of rings of algebraic integers. This poses two issues:
\begin{enumerate}
\item{First, bounded generation of the corresponding Chevalley group $G(\Phi,R).$ There is the result of Nica \cite{Nica} regarding the basic case of $R=\mathbb{F}[T]$ for $\mathbb{F}$ finite, but no general result in this direction is known to us.
However replacing the number theoretic results used in the proofs of \cite{MR2357719} or \cite{MR1044049} by corresponding results in the 'number theory of R'
this should be possible. It seems likely to us that there might be some issues in case of the critical characteristics $2$ and $3$ for some root systems 
$\Phi.$}
\item{Secondly, our argument in case $\Phi=B_2, G_2$ relied on $R/2R$ being finite. This is clearly true in the case of $char(R)\geq 3$ as 
$R/2R$ is trivial in this case. If $char(R)=2,$ then this fails.}
\end{enumerate}

Next, one can ask about more general S-arithmetic lattices than the ones we dealt with. Here S-arithmetic lattices means groups commensurable with $G(\Phi,R).$ There is one straightforward way of generalizing our results to finite index subgroups by way of analyzing the subnormal structure of the Chevalley groups instead of the normal structure as we did, but this strategy will break down quickly after only a couple of easy examples. 
The main issue is that the subnormal group structures are- to the best of our knowledge- badly understood for arbitrary commutative rings in the moment. There seem to be some results in this direction notably \cite{Hong} and \cite{Vavilov_Subnormal}.
Also it might be possible to imitate the strategy of Morris and try to isolate certain first order properties of rings of algebraic integers that might facilitate such a strategy.

Beyond this, there is also the issue that boundedness are badly behaved under passage to finite index supergroups and subgroups. Further it is not clear to us what algebraic structure plays the role of the level ideals of the corresponding subgroup in question in this case. 

Considering the fact, that we are mainly interested in lattices it seems likely that a more geometric interpretation of our results is the most straightforward path to a generalization. 

\section*{Appendix}

\begin{proof}[Lemma~\ref{central_elements}]
We split the proof into three parts. First we are going to show the statement for fields, then for integral domains and finally for general reduced rings.
So let $K$ be a field and $A=(a_{kl})\in G(\Phi,K)$ be given. For fields one has $G(\Phi,K)=E(\Phi,K)$ by \cite[Corollary~2.4]{MR439947} and hence $A$ is central in 
$G(\Phi,K).$ Then by \cite[Lemma~28, Chapter~3,p.~29]{MR3616493} there are $t_1,\dots,t_u\in K-\{0\}$ such that $A=\prod_{i=1}^u h_{\alpha_i}(t_i)$, where 
$\{\alpha_1,\dots,\alpha_u\}=\Pi$ are the simple, positive roots in $\Phi.$ Further
\begin{equation}\label{central_elements_eq1}
1=\prod_{i=1}^u t_i^{\langle\phi,\alpha_i\rangle}\text{ for all }\phi\in\Phi.
\end{equation}
Furthermore, for $1\leq j\leq u$ the element $A$ acts on the component of $K^{n_j}\subset K^{n_1+n_2+\dots+n_u}$ 
associated to the highest weight $\lambda_j$ of $V_j$ by multiplication with
\begin{equation}\label{central_elements_eq2}
\prod_{i=1}^u t_i^{\langle\lambda_j,\alpha_i\rangle}=\prod_{i=1}^u t_i^{\delta_{ij}}=t_j.
\end{equation}
Here we use that $\lambda_j$ is chosen as the fundamental weight corresponding to $\alpha_j$, that is $\langle\lambda_j,\alpha_i\rangle=\delta_{ij}$ holds for all $1\leq i,j\leq u.$
Each other weight in $V_j$ has the form $\lambda_j-\sum\phi$, where the $\phi$ are positive roots in $\Phi.$ Then (\ref{central_elements_eq1})
and (\ref{central_elements_eq2}) imply that $A$ acts on $K^{n_j}$ by $t_j I_{n_j}.$ So this yields the claim for fields.

For integral domains $R$, we distinguish two cases: 
\begin{case} R is finite.
\end{case}
Yet finite integral domains are fields and hence we are done.
\begin{case} R is infinite.
\end{case}
Let $\alpha\in\Phi$ be given and observe that for $K$ the algebraic closure of the quotient field of $R$ we have the map 
\begin{equation*}
\phi_{\alpha}:\mathbb{G}_a(K)=K\to G(\Phi,R),\lambda\mapsto (A,\varepsilon_{\alpha}(\lambda)).
\end{equation*}
This is a morphism of algebraic varieties and note that as $A$ commutes with elements in $\varepsilon_{\alpha}(R)$ by assumption, 
we have $\phi_{\alpha}|_R$ is equal to the identity. But $R$ is Zariski-dense in $\mathbb{G}_a(K).$
So $\phi_{\alpha}|_R$ being the identity implies that $\phi_{\alpha}$ is constant. Hence $A$ commutes with the entire group $\varepsilon_{\alpha}$ in 
$G(\Phi,K).$ However $G(\Phi,K)$ is generated by the elements $\{\varepsilon_{\alpha}(\lambda)|\lambda\in K,\alpha\in\Phi\}$. Hence 
$A$ is central in $G(\Phi,K)$, so we are done again.

Lastly, let $R$ be a reduced ring. Further let $\C P$ be a prime ideal in $R.$ So $\pi_{\C P}(A)\in G(\Phi,R/\C P)$ commutes with $E(\Phi,R/\C P)$ and
$R/\C P$ is an integral domain. Thus we obtain 
\begin{equation*}
A\equiv(a_{11}I_{n_1})\oplus\cdots\oplus(a_{n_1+\cdots+n_{u-1}+1,n_1+\cdots+n_{u-1}+1}I_{n_u})\text{ mod}\C P
\end{equation*}
for all prime ideals $\C P.$ This implies 
\begin{equation*}
A\equiv(a_{11}I_{n_1})\oplus\cdots\oplus(a_{n_1+\cdots+n_{u-1}+1,n_1+\cdots+n_{u-1}+1}I_{n_u})\text{ mod}\bigcap_{\C P\text{ prime in }R}\C P=\sqrt{(0)}.
\end{equation*}
However $R$ is reduced and so $\sqrt{(0)}=(0)$ holds and we are done.
\end{proof}

\bibliography{bibliography}
\bibliographystyle{plain}

\end{document}